\definecolor{darkgreen}{rgb}{0,0.5,0}
\newtheorem{mylemma}[theorem]{Lemma}
\newtheorem{mycorollary}[theorem]{Corollary}
\newtheorem{mydefinition}[theorem]{Definition}
\newtheorem{myassumption}[theorem]{Assumption}
\newenvironment{myproof}[1][]{\medskip\noindent\textbf{Proof of #1:}\ }{\hfill$\blacksquare$}
\newenvironment{proofsketch}[1][]{\medskip\noindent\textbf{Proof sketch of #1:}\ }{\hfill$\blacksquare$}
\newcommand{\ba}{\mathbf{a}}
\newcommand{\bb}{\mathbf{b}}
\newcommand{\bc}{\mathbf{c}}
\newcommand{\bs}{\mathbf{s}}
\newcommand{\bu}{\mathbf{u}}
\newcommand{\bx}{\mathbf{x}}
\newcommand{\by}{\mathbf{y}}
\newcommand{\bz}{\mathbf{z}}
\newcommand{\bbeta}{\bm{\beta}}
\newcommand{\bgamma}{\bm{\gamma}}
\newcommand{\bA}{\mathbf{A}}
\newcommand{\bB}{\mathbf{B}}
\newcommand{\bC}{\mathbf{C}}
\newcommand{\bD}{\mathbf{D}}
\newcommand{\bP}{\mathbf{P}}
\newcommand{\bQ}{\mathbf{Q}}
\newcommand{\bV}{\mathbf{V}}
\newcommand{\bS}{\mathbf{S}}
\newcommand{\bU}{\mathbf{U}}
\newcommand{\bX}{\mathbf{X}}
\newcommand{\bY}{\mathbf{Y}}
\newcommand{\bZ}{\mathbf{Z}}
\newcommand{\bSigma}{\mathbf{\Sigma}}
\newcommand{\cE}{\mathcal{E}}
\newcommand{\cG}{\mathcal{G}}
\newcommand{\cO}{\mathcal{O}}
\newcommand{\ind}{\perp\!\!\!\!\perp}
\newcommand{\DO}{\texttt{do}}
\newcommand{\bbP}{\mathbb{P}}
\newcommand{\bbE}{\mathbb{E}}
\newcommand{\Bern}{\textnormal{Bern}}
\newcommand{\Bin}{\textnormal{Bin}}
\newcommand{\Pois}{\textnormal{Pois}}
\newcommand{\subG}{\textnormal{subG}}
\newcommand{\hatT}{\widehat{T}}
\newcommand{\hatbbP}{\widehat{\bbP}}
\newcommand{\ND}{\textnormal{ND}}
\newcommand{\De}{\textnormal{De}}
\newcommand{\Pa}{\textnormal{Pa}}
\newcommand{\An}{\textnormal{An}}
\newcommand{\Dis}{\textnormal{Dis}}
\newcommand{\ApproxCondInd}{\textnormal{\textsc{ApproxCondInd}}}
\newcommand{\AMBA}{\textnormal{\textsc{\textbf{A}pproximate\textbf{M}arkov\textbf{B}lanket\textbf{A}djustment}}}
\newcommand{\BAMBA}{\textnormal{\textsc{\textbf{B}eyond\textbf{A}pproximate\textbf{M}arkov\textbf{B}lanket\textbf{A}djustment}}}
\newcommand{\AMBAshort}{\textnormal{\textsc{Amba}}}
\newcommand{\BAMBAshort}{\textnormal{\textsc{Bamba}}}
\newcommand{\totaltol}{\lambda}
\newcommand{\bbR}{\mathbb{R}}
\newcommand{\kron}{\mathbbm{1}}
\newcommand{\wh}{\widehat}
\newcommand{\wt}{\widetilde}
\newcommand{\eps}{\varepsilon}
\newcommand{\xor}{\oplus}
\newcommand{\VminusXY}{\bV \setminus (\bX \cup \bY)}
\definecolor{colorone}{RGB}{0,0,150}
\definecolor{colortwo}{RGB}{150,0,0}
\newcommand{\estimationerror}{|\widehat{T}_{\bS,\bx,\by} - T_{\bS,\bx,\by}|}
\title[PAC estimation of valid adjustments]{Probably approximately correct high-dimensional causal effect
estimation given a valid adjustment set}
\thanks{Equal contribution} \Email{davin@u.nus.edu}\\
\begin{document}

\maketitle

\begin{abstract}%
Accurate estimates of causal effects play a key role in decision-making across applications such as healthcare, economics, and operations.
In the absence of randomized experiments, a common approach to estimating causal effects uses \textit{covariate adjustment}.
In this paper, we study covariate adjustment for discrete distributions from the PAC learning perspective, assuming knowledge of a valid adjustment set $\bZ$, which might be high-dimensional.
Our first main result PAC-bounds the estimation error of covariate adjustment by a term that is exponential in the size of the adjustment set; it is known that such a dependency is unavoidable even if one only aims to minimize the mean squared error.
Motivated by this result, we introduce the notion of an \emph{$\eps$-Markov blanket}, give bounds on the misspecification error of using such a set for covariate adjustment, and provide an algorithm for $\eps$-Markov blanket discovery; our second main result upper bounds the sample complexity of this algorithm.
Furthermore, we provide a misspecification error bound and a constraint-based algorithm that allow us to go beyond $\eps$-Markov blankets to even smaller adjustment sets.
Our third main result upper bounds the sample complexity of this algorithm, and our final result combines the first three into an overall PAC bound.
Altogether, our results highlight that one does not need to perfectly recover causal structure in order to ensure accurate estimates of causal effects.
% Finally, we accompany our theoretical results with an empirical demonstration of our algorithms.
\end{abstract}

\begin{keywords}%
Causality, covariate adjustment, PAC bounds, finite sample complexity
\end{keywords}

\addtocontents{toc}{\protect\setcounter{tocdepth}{0}}

\section{Introduction}
\label{sec:introduction}

Let $\bbP(\bV)$ be an \emph{unknown} probability distribution over discrete random variables $\bV$.
Using i.i.d.\ samples from $\bbP(\bV)$, we wish to estimate the probability that $\bY \subset \bV$ equals $\by$, in the interventional distribution where $\bX \subset \bV$ is set to $\bx$.
This problem, called \emph{causal effect estimation}, can be formalized in either the Neyman-Rubin potential outcomes (PO) framework \citep{rubin1974estimating,splawa1990application,sekhon2009neyman} or in Pearl's graphical causality framework \citep{pearl2009causal}.
Depending on the framework, the desired estimand is written as $\bbP(\bY(\bx) = \by)$ or $\bbP_{\bx}(\by) = \bbP(\bY = \by \mid \DO(\bX = \bx))$, and has several important downstream applications such as estimating treatment effects.
% $\bbE[\bY \mid \DO(\bX = \bx)] - \bbE[\bY \mid \DO(\bX = \bx')]$, or $\bbE[\bY(\bx)] - \bbE[\bY(\bx')]$ in the potential outcome notation, for $\bx \neq \bx'$.
In this work, we consider the problem from the viewpoint of distribution learning \citep{kearns1994learnability} under the Probably Approximately Correct (PAC) learning model \citep{valiant1984theory}.

\medskip
\noindent
\textbf{The PAC causal effect estimation (PAC-CEE) problem.}
Given (1) estimation tolerance $\totaltol > 0$, (2) failure tolerance $\delta > 0$, (3) sample access to a distribution $\bbP(\bV)$, and (4) an interventional query $\bbP_{\bx}(\by)$, output an estimate $\hatbbP_{\bx}(\by)$ such that 
$
\Pr \left( 
\left| \hatbbP_{\bx}(\by) - \bbP_{\bx}(\by) \right| \leq \totaltol
\right) 
\geq 1 - \delta
$.
\medskip

For this problem to be well-posed, one must be able to relate the observational distribution $\bbP(\bV)$ to the interventional distribution $\bbP_{\bx}(\by)$ via some \textit{identification formula}, i.e., $\bbP_{\bx}(\by)$ must be uniquely determined by $\bbP(\bV)$.
Here, we focus on a commonly studied identification formula that involve a set of variables $\bZ \subset \VminusXY$ such that $\bbP_{\bx}(\by) = T_{\bZ, \bx, \by}$, where
\begin{equation}
\label{eqn:covariate-adjustment}
T_{\bZ, \bx, \by}
:= \sum\nolimits_{\bz \in \bSigma_{\bZ}} \bbP(\bY = \by \mid \bZ = \bz, \bX = \bx) \cdot \bbP(\bZ = \bz)
= \sum\nolimits_{\bz} \bbP(\by \mid \bz, \bx) \cdot \bbP(\bz),
\end{equation}
% \begin{equation}
% \label{eqn:covariate-adjustment}
% \bbP_{\bx}(\by) = T_{\bZ, \bx, \by}, 
% \quad\textrm{where}\quad
% T_{\bZ, \bx, \by} := \sum\nolimits_{\bz \in \bSigma_{\bZ}} \bbP(\bY = \by \mid \bZ = \bz, \bX = \bx) \cdot \bbP(\bZ = \bz),
% \end{equation}
with $\bSigma_{\bZ}$ denoting the alphabet of the variables $\bZ$.
For instance, in the PO framework, \cref{eqn:covariate-adjustment} holds under the assumptions of \textit{consistency} and \textit{conditional ignorability} of $\bX$ with respect to $\bZ$; see \cref{sec:appendix-potential-outcomes-adjustment} for a simple derivation.
Meanwhile, in the graphical framework, \cref{eqn:covariate-adjustment} can be shown to hold if $\bZ$ satisfies certain graphical criterion with respect to $\bX$ and $\bY$, such as the \textit{(generalized) backdoor criterion} or the \textit{(generalized) adjustment criteria} \citep{pearl1995causal, shpitser2010validity, maathuis2015generalized, perkovic2018complete}.
Following the latter viewpoint, we call $\bZ$ a \textit{valid adjustment set} for $\bbP_{\bx}(\by)$ if $\bZ$ satisfies $\bbP_{\bx}(\by) = T_{\bZ, \bx, \by}$ in \cref{eqn:covariate-adjustment}, but we emphasize that our results are framework-agnostic, i.e., they do not depend on how \cref{eqn:covariate-adjustment} is derived.

In particular, we establish our PAC guarantees by directly analyzing the sample complexity required to produce an estimate $\hatT_{\bZ, \bx, \by}$ of $T_{\bZ, \bx, \by}$.
A recent work of \citet{zeng2024causal} shows that $\Omega \left( \frac{1}{\lambda^2 \alpha_\bZ} + \frac{|\bSigma_{\bZ}|}{\lambda \alpha_\bZ} \right)$ samples are sufficient to ensure an expectation bound of $\bbE \left( |T_{\bZ,\bx,\by} - \hatT_{\bZ,\bx,\by}| \right) \leq \lambda$, where $\alpha_\bZ$ is a \textit{positivity} (a.k.a.\ \textit{overlap}) parameter that is common in causal effect estimation; we translate their actual stated bound into the form we describe here in \cref{sec:appendix-ZBHK24-derivation}.
\citet{zeng2024causal} also presented a minimax lower bound showing that linear dependency on $|\bSigma_{\bZ}|$ is unavoidable.
Since $|\bSigma_{\bZ}|$ grows exponentially with the size of $\bZ$ (e.g.\ when all variables are binary, we have $|\bSigma_{\bZ}| = 2^{|\bZ|}$), it is critical to use \textit{small} adjustment sets whenever possible.

In our work, given a valid adjustment set $\bZ \subseteq \bV$ as an initial input, we explore the possibility of searching for smaller adjustment sets with the objective of using less total samples than directly producing a $\lambda$-good estimate $\hatT_{\bZ,\bx,\by}$.
We are able to obtain lower sample complexities because of the adage from the property testing literature that ``testing can be cheaper than learning''.
In particular, we develop testing-based algorithms to find a candidate adjustment set $\bS \subseteq \bZ$, then estimate $\hatT_{\bS,\bx,\by}$ and bound its error from $\bbP_{\bx}(\by) = T_{\bZ,\bx,\by}$ via triangle inequality:
\[
\left| \bbP_{\bx}(\by) - \hatT_{\bS,\bx,\by} \right|
= \left| T_{\bZ,\bx,\by} - \hatT_{\bS,\bx,\by} \right|
\leq \left| T_{\bZ,\bx,\by} - T_{\bS,\bx,\by} \right| + \left| T_{\bS,\bx,\by} - \hatT_{\bS,\bx,\by} \right|
\leq \eps_1 + \eps_2
= \lambda
\]
The overall error bound ($\lambda$) is at most the sum of the misspecification bias error term ($\eps_1$) and the estimation error term ($\eps_2$).
There is an inherent tradeoff between these two sources of error: using $\bS \subseteq \bZ$ for adjustment might introduce misspecification bias (if $\bS$ is not a valid adjustment set), but this bias may dominated by a corresponding reduction in estimation error if $\bS$ is sufficiently smaller than $\bZ$.
While our approach to selecting $\bS$ is best appreciated through the lens of the graphical causality framework, it also applies in the PO setting, as we only rely on conditional independence tests using i.i.d.\ samples from $\bbP(\bV)$.

\subsection{Our main results}\label{sec:intro-main-results}

% Our first main result extends the result of \citet{zeng2024causal} to the PAC setting by bounding the \textit{estimation error} $|T_{\bA,\bx,\by} - \hatT_{\bA,\bx,\by}|$ for arbitrary subsets $\bA \subseteq \bV$, where $\hatT_{\bA,\bx,\by}$ is the estimate of $T_{\bA,\bx,\by}$ obtained using empirical sample estimates of $\bbP(\bY = \by \mid \bA = \ba, \bX = \bx)$ and $\bbP(\bA = \ba)$ for all $\ba \in \Sigma_\bA$.

Our first main result extends the result of \citet{zeng2024causal} to the PAC setting by bounding the \textit{estimation error} $|T_{\bA,\bx,\by} - \hatT_{\bA,\bx,\by}|$ for arbitrary subsets $\bA \subseteq \VminusXY$, where $\hatT_{\bA,\bx,\by}$ is the estimate of $T_{\bA,\bx,\by}$ obtained using empirical sample estimates of $\bbP(\by \mid \ba,  \bx)$ and $\bbP(\ba)$ for all $\ba \in \Sigma_\bA$.
Throughout the paper, for any $\bA \subseteq \VminusXY$ arbitrary, we let 
\begin{equation}\label{eq:alpha-def}
    \alpha_{\bA} = \min\nolimits_{\ba \in \bSigma_{\bA}} \bbP(\bx \mid \ba)
\end{equation}

\begin{restatable}[Estimation error]{mytheorem}{estimationerror}
\label{thm:estimation-error}
Suppose we are given (1) estimation tolerance $\eps > 0$, (2) failure tolerance $\delta > 0$, (3) sample access to $\bbP(\bV)$, and (4) a subset $\bA \subseteq \VminusXY$.
Then, there is an algorithm that uses $\wt{\cO} \left( \left( \frac{|\bSigma_{\bA}|}{\eps \alpha_{\bA}} + \frac{1}{\eps^2 \alpha_{\bA}} + \frac{|\bSigma_{\bA}|}{\eps^2} \right) \cdot \log \frac{1}{\delta} \right)$ samples and produces an estimate $\hatT_{\bA, \bx, \by}$ such that
$
\Pr(|\hatT_{\bA, \bx, \by} - T_{\bA, \bx, \by}| \leq \eps) \geq 1 - \delta
$.
\end{restatable}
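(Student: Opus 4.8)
The plan is to analyze the natural plug-in estimator. Draw $N$ i.i.d.\ samples from $\bbP(\bV)$, let $\wh p_{\ba}$ be the empirical frequency of $\{\bA = \ba\}$, let $\wh q_{\ba}$ be the empirical frequency of $\{\bY = \by\}$ among the samples with $\{\bA = \ba,\ \bX = \bx\}$ (set to $0$ if there are none), and output $\hatT_{\bA,\bx,\by} = \sum_{\ba} \wh q_{\ba}\, \wh p_{\ba}$; writing $p_{\ba} := \bbP(\ba)$ and $q_{\ba} := \bbP(\by \mid \ba, \bx)$ we have $T_{\bA,\bx,\by} = \sum_{\ba} q_{\ba} p_{\ba}$. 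We may assume $\eps < 1$, as the bound is otherwise trivial. The first step is the decomposition
\[
\hatT_{\bA,\bx,\by} - T_{\bA,\bx,\by}
= \sum_{\ba} \wh q_{\ba}(\wh p_{\ba} - p_{\ba}) + \sum_{\ba} (\wh q_{\ba} - q_{\ba})\, p_{\ba},
\]
where, since $\wh q_{\ba} \in [0,1]$, the first sum is at most $\sum_{\ba} |\wh p_{\ba} - p_{\ba}|$ in absolute value. Hence it suffices to show that each of (i) $\sum_{\ba} |\wh p_{\ba} - p_{\ba}|$ and (ii) $\bigl|\sum_{\ba} (\wh q_{\ba} - q_{\ba})\, p_{\ba}\bigr|$ is at most $\eps/2$ with probability $\ge 1 - \delta/2$; a union bound over these two failure events (which needs no independence between $\wh p$ and $\wh q$) together with the triangle inequality then completes the proof.

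For (i), I would invoke the standard concentration of the $\ell_1$ distance between an empirical distribution and the truth over an alphabet of size $|\bSigma_{\bA}|$: with $N = O\bigl((|\bSigma_{\bA}| + \log(1/\delta))/\eps^2\bigr)$ samples this is at most $\eps/2$ except with probability $\delta/2$. This is the source of the $|\bSigma_{\bA}|/\eps^2$ term.

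Term (ii) is the heart of the argument. Let $N_{\ba,\bx}$ be the number of samples with $\bA = \ba,\ \bX = \bx$, and note $\bbP(\ba,\bx) = \bbP(\bx \mid \ba)\, p_{\ba} \ge \alpha_{\bA}\, p_{\ba}$ by the definition of $\alpha_{\bA}$. Fix a threshold $c = \Theta\bigl(\log(|\bSigma_{\bA}|/\delta)\bigr)$ and partition $\bSigma_{\bA}$ into the ``light'' cells $L = \{\ba : \bbP(\ba,\bx) < c/N\}$ and the ``heavy'' cells $H$ (this partition depends only on $\bbP$, not on the samples). For $\ba \in L$ we have $p_{\ba} \le \bbP(\ba,\bx)/\alpha_{\bA} < c/(N\alpha_{\bA})$, so $\sum_{\ba \in L} p_{\ba} < c\,|\bSigma_{\bA}| / (N \alpha_{\bA}) \le \eps/4$ once $N = \Omega\bigl(|\bSigma_{\bA}| \log(|\bSigma_{\bA}|/\delta)/(\eps \alpha_{\bA})\bigr)$; since $|\wh q_{\ba} - q_{\ba}| \le 1$ this bounds the $L$-contribution to (ii) by $\eps/4$ for every sample realization, and is the source of the $|\bSigma_{\bA}|/(\eps \alpha_{\bA})$ term. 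For $\ba \in H$ we have $\bbE[N_{\ba,\bx}] = N\,\bbP(\ba,\bx) \ge c$, so a multiplicative Chernoff bound and a union bound over the $\le |\bSigma_{\bA}|$ heavy cells give an event $E$ of probability $\ge 1 - \delta/4$ on which every $\ba \in H$ satisfies $N_{\ba,\bx} \ge \tfrac12 N\,\bbP(\ba,\bx) \ge \tfrac12 N \alpha_{\bA}\, p_{\ba}$. On $E$, condition on the counts $\{N_{\ba,\bx}\}$: each $\wh q_{\ba}$ is then an independent average of $N_{\ba,\bx}$ i.i.d.\ $\Bern(q_{\ba})$ variables, so $\sum_{\ba \in H} p_{\ba}(\wh q_{\ba} - q_{\ba})$ is a sum of independent mean-zero terms of the form $\frac{p_{\ba}}{N_{\ba,\bx}}(B_{\ba,i} - q_{\ba})$ with total variance $\sum_{\ba \in H} \frac{p_{\ba}^2\, q_{\ba}(1-q_{\ba})}{N_{\ba,\bx}} \le \frac{1}{2N\alpha_{\bA}}$ and each summand bounded by $\frac{p_{\ba}}{N_{\ba,\bx}} \le \frac{2}{N\alpha_{\bA}}$ in magnitude. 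Bernstein's inequality then gives $\bigl|\sum_{\ba \in H} p_{\ba}(\wh q_{\ba} - q_{\ba})\bigr| \le \eps/4$ with conditional probability $\ge 1 - \delta/4$ as soon as $N = \Omega\bigl(\log(1/\delta)/(\eps^2 \alpha_{\bA})\bigr)$; integrating over $E$ and adding $\Pr(E^c) \le \delta/4$ makes the $H$-part at most $\eps/4$ with (unconditional) probability $\ge 1 - \delta/2$. Thus (ii) $\le \eps/2$ with probability $\ge 1 - \delta/2$, and taking $N$ to be the maximum of the three requirements gives the claimed $\wt{\cO}$ bound.

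I expect the main obstacle to be obtaining the $1/(\eps^2 \alpha_{\bA})$ dependence in (ii) rather than the weaker $|\bSigma_{\bA}|/(\eps^2 \alpha_{\bA})$. A naive approach would bound $|\wh q_{\ba} - q_{\ba}|$ by $O\bigl(\sqrt{\log(|\bSigma_{\bA}|/\delta)/N_{\ba,\bx}}\bigr)$ for each cell via a union bound and then sum: $\sum_{\ba} p_{\ba}\sqrt{\log(\cdot)/(N\alpha_{\bA} p_{\ba})} = \sqrt{\log(\cdot)/(N\alpha_{\bA})}\sum_{\ba}\sqrt{p_{\ba}} \le \sqrt{|\bSigma_{\bA}|\log(\cdot)/(N\alpha_{\bA})}$ by Cauchy--Schwarz, losing a factor of $|\bSigma_{\bA}|$. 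Avoiding this loss forces one to treat the full weighted sum at once and control its variance, which is exactly the role of the Bernstein step — and that step in turn relies on the Chernoff-type lower bound on the cell counts $N_{\ba,\bx}$ for heavy cells and the separate, worst-case treatment of the few light cells. The remaining ingredients — the $\ell_1$ concentration of $\wh p$ and the union bounds tying the pieces together — are routine.
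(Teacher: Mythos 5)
Your proposal is correct, and its skeleton coincides with the paper's: the same two-term decomposition of $\hatT_{\bA,\bx,\by} - T_{\bA,\bx,\by}$, the same three-way split (a deterministically-bounded light-cell piece giving $|\bSigma_\bA|/(\eps\alpha_\bA)$, a concentration bound on the heavy-cell piece giving $1/(\eps^2\alpha_\bA)$, and an $\ell_1$-estimation piece giving $|\bSigma_\bA|/\eps^2$), and the same threshold on $\bbP(\bx,\ba)$ to define the partition. The one genuine difference is the machinery for the heavy-cell concentration. The paper Poissonizes the sample size so that the cell counts $N_{\bx,\ba}$ become mutually independent Poissons, conditions on the event that each heavy count exceeds half its mean, and then shows the weighted sum is sub-Gaussian with parameter $\leq 1/(2n\alpha_\bA)$ via Hoeffding's lemma and sub-Gaussian additivity. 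You instead stay with ordinary multinomial sampling, use a multiplicative Chernoff bound for the same half-the-mean event, and then condition on the realized $(\bA,\bX)$-cell counts; this conditioning makes the $\wh q_\ba$ independent averages of i.i.d.\ Bernoullis without any Poissonization, and Bernstein's inequality with variance $O(1/(N\alpha_\bA))$ and increment bound $O(1/(N\alpha_\bA))$ delivers the same $1/(\eps^2\alpha_\bA)$ rate. Your route is somewhat more elementary (it dispenses with the Poissonization preliminaries and the translation back to fixed sample size), at the cost of being slightly more delicate about what exactly is being conditioned on; the paper's route packages the independence structure once and for all in \cref{lem:poissonization}. You also correctly identify and avoid the lossy per-cell union bound that would cost an extra $|\bSigma_\bA|$ factor, which is exactly the point of the paper's sub-Gaussian-sum step.
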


Note that, up to logarithmic factors and the additional $\frac{|\bSigma_{\bA}|}{\eps^2}$ factor, the sample complexity of the PAC bound matches the sample complexity of the expectation bound.
Here, we switched from $\lambda$ to $\eps$ and from $\bZ$ to $\bA$ to emphasize that the estimation error is only one part of our overall bound.
Surprisingly, although covariate adjustment is one of the simplest and most widely-used estimation techniques in causality, this result is (to the best of our knowledge) the first PAC bound on causal effect estimation for discrete variables.
In particular, previous works either focus on different estimands (under additional assumptions such as knowing a causal graph) or consider continuous variables and primarily provide only asymptotic results; we discuss related works in \cref{sec:related-work-feature-selection}.
% \textcolor{orange}{Surprisingly, although covariate adjustment is one of the simplest and most widely-used estimation techniques in causality, this result is (to the best of our knowledge) the first PAC bound on causal effect estimation for discrete variables.
% In particular, previous works either focus on different estimands (under additional assumptions such as knowing a causal graph) or consider continuous variables and primarily provide only asymptotic results; we discuss related works in \cref{sec:related-work-feature-selection}.
% In particular, previous works either focus on different estimands (e.g. \cite{bhattacharyya2020learning} and \cite{bhattacharyya2022efficient} study estimation of the entire interventional distribution $\bbP_\bx(\bV \setminus \bX)$), or consider continuous variables and primarily provide only asymptotic results (see e.g. the related works discussed in \cref{sec:related-work-feature-selection}).
% }

Importantly, the sample complexity depends exponentially on $|\bA|$, hence it is crucial to keep $\bA$ small. 
As a paradigmatic example, consider the causal graph given in \cref{fig:intro-motivation}: instead of directly using $\bZ = \{A_1, \ldots, A_k, B\}$, there are two possible smaller subsets within $\bZ$ itself that satisfy the (generalized) backdoor adjustment criterion \citep{pearl1995causal, shpitser2010validity, maathuis2015generalized, perkovic2018complete}, and that therefore also serve as valid adjustment sets.

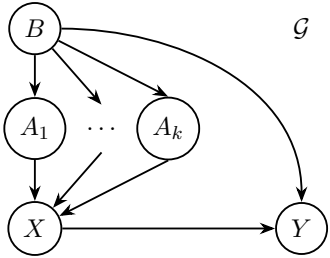
\begin{figure}[htb]
\centering
\resizebox{0.3\linewidth}{!}{
\begin{tikzpicture}
\node[] at (4,3) {$\cG$};
\node[draw, thick, minimum size=20pt, circle] at (0,0) (x) {$X$};
\node[draw, thick, minimum size=20pt, circle] at (4,0) (y) {$Y$};
\node[draw, thick, minimum size=20pt, circle] at (0,1.5) (a1) {$A_1$};
\node[minimum size=20pt] at (1,1.5) (dots) {$\ldots$};
\node[draw, thick, minimum size=20pt, circle] at (2,1.5) (ak) {$A_k$};
\node[draw, thick, minimum size=20pt, circle] at (0,3) (b) {$B$};
\draw[thick, -Stealth] (x) -- (y);
\draw[thick, -Stealth] (b) to[out=0, in=90] (y);
\draw[thick, -Stealth] (b) -- (a1);
\draw[thick, -Stealth] (b) -- (dots.north);
\draw[thick, -Stealth] (b) -- (ak.north);
\draw[thick, -Stealth] (a1) -- (x);
\draw[thick, -Stealth] (dots.south) -- (x);
\draw[thick, -Stealth] (ak.south) -- (x);
\end{tikzpicture}
}
\caption{Consider the graphical causality framework and $\bbP_{x}(y)$ in $\cG$ with $\bZ = \{ A_1, \ldots, A_k, B \}$ as a valid adjustment set.
Both the parental set $\Pa(X) = \{ A_1, \ldots, A_k \}$ and the singleton set $\{B\}$ satisfy the backdoor adjustment criterion \citep{pearl1995causal} and are also valid.}
% \caption{Consider the graphical causality framework and suppose we are given $\bZ = \{ A_1, \ldots, A_k, B \}$ as a valid adjustment set for $\bbP_{x}(y)$ in the above causal graph $\cG$.
% Both the parental set $\Pa(X) = \{ A_1, \ldots, A_k \}$ and the singleton set $\{B\}$ satisfy the backdoor adjustment criterion \citep{pearl1995causal} and are also valid adjustment sets.}
\label{fig:intro-motivation}
\end{figure}

As a first approach for obtaining smaller adjustment sets, we consider Markov blankets of the treatments $\bX$.
In particular, we say that $\bS \subseteq \bZ$ is a \textit{Markov blanket} of $\bX$ with respect to $\bZ$ when $\bX \ind \bZ \setminus \bS \mid \bS$, and one can show that $T_{\bS, \bx, \by}$ = $T_{\bZ, \bx, \by}$ when this conditional independence holds.
In the example given in \cref{fig:intro-motivation}, the parental set $\Pa(X) = \{ A_1, \ldots, A_k \}$ satisfies this condition: $X \ind \{B\} \mid \Pa(X)$.
To adapt this notion in the finite sample setting, we consider an approximate version of conditional independence and define a parameter $\Delta_{\bX \ind \bZ \setminus \bS \mid \bS}$ that measures how much the conditional independence condition is violated in $\bbP(\bV)$.
When $\Delta_{\bX \ind \bZ \setminus \bS \mid \bS} \leq \eps$, we say that $\bS$ is an \textit{$\eps$-Markov blanket} of $\bX$ with respect to $\bZ$.

\begin{mydefinition}[Approximate conditional independence]
\label{def:approx-cond-ind}
For disjoint sets $\bA, \bB, \bC \subseteq \bV$, we define $\Delta_{\bA \ind \bB \mid \bC} = \sum_{\ba, \bb, \bc} \bbP(\bc) \cdot | \bbP(\ba, \bb \mid \bc) - \bbP(\ba \mid \bc) \cdot \bbP(\bb \mid \bc) |$.
If $\Delta_{\bA \ind \bB \mid \bC} \leq \eps$, we write $\bA \ind_{\eps} \bB \mid \bC$.
\end{mydefinition}

\begin{mydefinition}[(Approximate) Markov blanket]
\label{def:markov-blanket}
Consider an arbitrary subset $\bA \subseteq \VminusXY$.
A subset $\bS \subseteq \bA$ is called a \emph{Markov blanket} of $\bX$ with respect to $\bA$ if $\bX \ind \bA \setminus \bS \mid \bS$ and an \emph{$\eps$-Markov blanket} if $\bX \ind_\eps \bA \setminus \bS \mid \bS$.
\end{mydefinition}

We show that $| T_{\bA, \bx, \by} - T_{\bS, \bx, \by} | \leq \frac{\eps}{\alpha_\bS}$ whenever $\bS$ is an $\eps$-Markov blanket of $\bA$.
In particular, if $\bA = \bZ$ is a valid adjustment set, then this bound applies to the misspecification bias mentioned above.
Our next result bounds the sample complexity for discovering an $\eps$-Markov blanket.

\begin{restatable}[Approximate Markov blanket discovery]{mytheorem}{approximatemarkovblanketdiscovery}
\label{thm:AMBA}
Suppose we are given (1) $\eps > 0$, (2) $\delta > 0$, (3) sample access to a distribution $\bbP(\bV)$, and (4) an arbitrary subset $\bA \subseteq \VminusXY$.
Suppose that there is a Markov blanket of $\bX$ with respect to $\bA$ with $k$ variables.
Then, there is an algorithm that uses $\wt{\cO} \left( \frac{|\bS|}{\eps^2} \cdot \sqrt{|\bSigma_{\bX}| \cdot |\bSigma_{\bA}|} \cdot \log \frac{1}{\delta} \right)$ samples and produces a subset $\bS \subseteq \bA$ such that $|\bS| \leq k$, $\Pr \left( \Delta_{\bX \ind \bA \setminus \bS \mid \bS} > \eps \right) \geq 1 - \delta$, and $\Pr \left( | T_{\bS, \bx, \by} - T_{\bA, \bx, \by} | \leq \frac{\eps}{\alpha_{\bS}} \right) \geq 1 - \delta$.
\end{restatable}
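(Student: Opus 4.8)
The plan is to run a brute-force minimum-cardinality search for an approximate Markov blanket: \EpsMarkovBlanketSearch\ enumerates the candidate subsets $\bS \subseteq \bA$ in nondecreasing order of $|\bS|$, calls an approximate conditional independence subroutine \ApproxCondInd\ on each to check whether $\bX \ind_{\eps} \bA \setminus \bS \mid \bS$, and returns the first $\bS$ that passes. Since the PAC bound charges samples but not running time, the (possibly exponential) number of candidates, which is at most $\binom{|\bA|}{\le k}$, only matters through a union bound.

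The workhorse is $\ApproxCondInd(\bX, \bB, \bC, \eps, \delta')$, which I would design so that, using $\wt\cO\big(\sqrt{m}\,\eps^{-2}\log(1/\delta')\big)$ i.i.d.\ samples from $\bbP(\bV)$ with $m = |\bSigma_\bX|\cdot|\bSigma_{\bB\cup\bC}|$, it outputs \textsc{Accept} with probability $\ge 1 - \delta'$ whenever $\Delta_{\bX \ind \bB \mid \bC} = 0$ and \textsc{Reject} with probability $\ge 1 - \delta'$ whenever $\Delta_{\bX \ind \bB \mid \bC} > \eps$ (in the range $0 < \Delta_{\bX \ind \bB \mid \bC} \le \eps$ either answer is fine). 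The route is the standard reduction: letting $\bQ(\bx,\bb,\bc) = \bbP(\bx,\bc)\bbP(\bb,\bc)/\bbP(\bc)$ be the conditional-independence projection of $\bbP_{\bX\bB\bC}$, one checks directly that $\Delta_{\bX \ind \bB \mid \bC} = \|\bbP_{\bX\bB\bC} - \bQ\|_1$, so the test is a closeness test on the joint domain of size $m$; one estimates the comparatively cheap marginals $\bbP(\bX,\bC)$, $\bbP(\bB,\bC)$, $\bbP(\bC)$, simulates samples from $\bQ$ by Poissonizing the per-$\bc$ sample counts, and runs a closeness tester at the sublinear $\sqrt{m}/\eps^2$ rate. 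I expect \textbf{this subroutine to be the main obstacle}: one must (i) attain the genuinely sublinear $\sqrt{|\bSigma_\bX|\cdot|\bSigma_\bA|}/\eps^2$ rate, as opposed to the learning rate $|\bSigma_\bX|\cdot|\bSigma_\bA|/\eps^2$, for the averaged-total-variation distance $\Delta$ of \cref{def:approx-cond-ind}, and (ii) control the error from low-probability (or zero-probability) configurations of the conditioning set $\bC$ without any positivity parameter leaking into the sample complexity, since the bound of \cref{thm:AMBA} contains no $\alpha$.

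Given the subroutine, the outer argument is routine. Run every \ApproxCondInd\ call with failure budget $\delta' = \delta/\binom{|\bA|}{\le k}$ and let $\cE$ be the event that all calls answer correctly; a union bound gives $\Pr(\cE) \ge 1 - \delta$, and we argue deterministically on $\cE$. By hypothesis there is an exact Markov blanket $\bS^\star$ of $\bX$ with respect to $\bA$ with $|\bS^\star| = k$, so $\Delta_{\bX \ind \bA\setminus\bS^\star \mid \bS^\star} = 0$; hence the call for $\bS^\star$ accepts, so the search terminates by the time it reaches cardinality $k$ and outputs some $\bS$ with $|\bS| \le k$. The returned $\bS$ was accepted, so by the contrapositive of the soundness guarantee $\Delta_{\bX \ind \bA\setminus\bS \mid \bS} \le \eps$, i.e., $\bS$ is an $\eps$-Markov blanket of $\bX$ with respect to $\bA$ (\cref{def:markov-blanket}). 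Finally, the misspecification bound already established in the paper — $|T_{\bA,\bx,\by} - T_{\bS,\bx,\by}| \le \eps/\alpha_\bS$ for every $\eps$-Markov blanket $\bS$ — gives $|T_{\bS,\bx,\by} - T_{\bA,\bx,\by}| \le \eps/\alpha_\bS$. Thus all three conclusions hold on the single high-probability event $\cE$.

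For the sample count, each \ApproxCondInd\ call runs at the $\sqrt{|\bSigma_\bX|\cdot|\bSigma_\bA|}/\eps^2$ rate (here $\bB \cup \bC = \bA$, so $m = |\bSigma_\bX|\cdot|\bSigma_\bA|$) with a failure budget split across the searched subsets; restarting with fresh samples at each cardinality level and stopping at the first accepting one (of which there are $|\bS|$, and $|\bA| \le \log_2|\bSigma_\bA|$ so the per-level union-bound overhead is absorbed into $\wt\cO$) gives the claimed $\wt\cO\big(\frac{|\bS|}{\eps^2}\sqrt{|\bSigma_\bX|\cdot|\bSigma_\bA|}\log\frac{1}{\delta}\big)$.
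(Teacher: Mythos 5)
Your proposal matches the paper's proof essentially step for step: the same brute-force search over subsets in nondecreasing cardinality with \ApproxCondInd\ on each candidate, the same union bound over all $\binom{|\bA|}{\le k}$ tests, the same use of the exact Markov blanket $\bS^\star$ to guarantee termination at size $\le k$, and the same appeal to \cref{lem:misspecification-error} to convert $\Delta_{\bX \ind \bA\setminus\bS\mid\bS} \le \eps$ into $|T_{\bS,\bx,\by}-T_{\bA,\bx,\by}| \le \eps/\alpha_\bS$. The one subroutine you flag as the main obstacle --- a conditional independence tester for the averaged-TV quantity $\Delta$ at the sublinear $\sqrt{|\bSigma_\bX|\cdot|\bSigma_\bA|}/\eps^2$ rate with no positivity dependence --- is not re-derived in the paper but imported as a black box from \cite{canonne2018testing} (\cref{lem:approxcondind-guarantees}), so no gap remains.
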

% \begin{restatable}[Approximate Markov blanket discovery]{mytheorem}{approximatemarkovblanketdiscovery}
% \label{thm:AMBA}
% Suppose we are given (1) estimation tolerance $\eps > 0$, (2) failure tolerance $\delta > 0$, (3) sample access to a distribution $\bbP(\bV)$, and (4) an arbitrary subset $\bA \subseteq \VminusXY$.
% Then, there is an algorithm that uses $\wt{\cO} \left( \frac{|\bS|}{\eps^2} \cdot \sqrt{|\bSigma_{\bX}| \cdot |\bSigma_{\bA}|} \cdot \log \frac{1}{\delta} \right)$ samples and produces a subset $\bS \subseteq \bA$ such that $\Pr \left( \Delta_{\bX \ind \bA \setminus \bS \mid \bS} > \eps \right) \geq 1 - \delta$ and $\Pr \left( | T_{\bS, \bx, \by} - T_{\bA, \bx, \by} | \leq \frac{\eps}{\alpha_{\bS}} \right) \geq 1 - \delta$.
% \end{restatable}

Now, suppose that \cref{thm:AMBA} outputs $\bS \subseteq \bZ$ when given a valid adjustment set $\bZ$.
While $|\bS|$ may be smaller than $|\bZ|$, it may still be much larger than the smallest valid adjustment set for $\bbP_\bx(\by)$.
For example, we see that $|\bZ| = k+1 > k = |\Pa(X)| \gg |\{B\}| = 1$ in \cref{fig:intro-motivation} where $\bZ$, $\Pa(X)$, and $\{B\}$ are all valid adjustment sets.
Our next result aims to find an adjustment set $\bS' \subseteq \bZ$ of \emph{minimal size} given a valid adjustment set $\bZ$ and an $\eps$-Markov blanket $\bS \subseteq \bZ$ of it.
To this end, we introduce the more general concept of a \emph{screening set} of an arbitrary subset $\bA \subseteq \VminusXY$.
\begin{mydefinition}[(Approximate) Screening set]
Let $\bA \subseteq \VminusXY$ and $\bS \subseteq \bA$.
A subset $\bB \subseteq \bA$ is called a \emph{screening set} for $(\bS, \bA, \bX, \bY)$ if $\bY \ind \bS \setminus \bB \mid \bX \cup \bB$ and $\bX \ind \bB \setminus \bS \mid \bS$.
Meanwhile, the subset $\bB$ is called an \emph{$\eps$-screening set} for $(\bS, \bA, \bX, \bY)$ if $\bY \ind_\eps \bS \setminus \bB \mid \bX \cup \bB$ and $\bX \ind_\eps \bB \setminus \bS \mid \bS$.
\end{mydefinition}

As a technical side note (see the exposition after \cref{lem:minimal-soundness}), given an adjustment set $\bS$, the screening set condition for $(\bS, \bA, \bX, \bY)$ is sound for $\bB$ to be a valid adjustment set, but it is incomplete in general, in that sense that there may exist valid adjustment sets that do not satisfy the screening set condition.
In the worst case, our algorithm in \cref{thm:BAMBA} will output $\bS' = \bS$.

\begin{restatable}[Beyond approximate Markov blankets]{mytheorem}{minimalgivenapproximatemarkovblanketdiscovery}
\label{thm:BAMBA}
Suppose we are given (1) $\eps > 0$, (2) $\delta > 0$, (3) sample access to $\bbP(\bV)$, (4) an arbitrary subset $\bA \subseteq \VminusXY$, and (5) an $\eps$-Markov blanket $\bS \subseteq \bA$.
Suppose there is a screening set $\bB$ for $(\bS, \bA, \bX, \bY)$ such that $|\bB| = k'$ and $|\Sigma_{\bB}| \leq |\Sigma_\bS|$.
There is an algorithm that uses $\wt{\cO} \left( \frac{|\bS'|}{\eps^2} \cdot \sqrt{|\bSigma_{\bX}| \cdot |\bSigma_{\bY}| \cdot |\bSigma_{\bA}|} \cdot \log \frac{1}{\delta} \right)$ samples and produces a subset $\bS' \subseteq \bA$ such that $|\bS'| \leq k'$, $|\bSigma_{\bS'}| \leq |\bSigma_{\bS}|$ and $\Pr \left( | T_{\bS', \bx, \by} - T_{\bA, \bx, \by} | \leq \frac{2 \eps}{\alpha_{\bS}} \right) \geq 1 - \delta$.
\end{restatable}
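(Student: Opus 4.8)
The plan is to control $|T_{\bS',\bx,\by} - T_{\bA,\bx,\by}|$ by the triangle inequality $|T_{\bS',\bx,\by} - T_{\bA,\bx,\by}| \le |T_{\bS',\bx,\by} - T_{\bS,\bx,\by}| + |T_{\bS,\bx,\by} - T_{\bA,\bx,\by}|$. The second term is $\le \eps/\alpha_\bS$ by hypothesis, since $\bS$ is an $\eps$-Markov blanket of $\bA$ and we have already shown this implies $|T_{\bS,\bx,\by} - T_{\bA,\bx,\by}| \le \eps/\alpha_\bS$. It therefore suffices to produce $\bS' \subseteq \bA$ with $|\bS'| \le k'$, $|\bSigma_{\bS'}| \le |\bSigma_\bS|$, and $|T_{\bS',\bx,\by} - T_{\bS,\bx,\by}| \le \eps/\alpha_\bS$. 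For the last bound I would invoke a misspecification bound for $\eps$-screening sets, analogous to the one for $\eps$-Markov blankets. Its exact (zero-slack) version is a short manipulation of \cref{eqn:covariate-adjustment} along the lines of \cref{lem:minimal-soundness}: with $\bU = \bS\cap\bB$, $\bW = \bS\setminus\bB$, $\bQ = \bB\setminus\bS$, the screening conditions $\bY \ind \bW \mid \bX\cup\bU\cup\bQ$ and $\bX \ind \bQ \mid \bU\cup\bW$ give $\bbP(\by\mid\bx,\bu,\bw) = \sum_{\bq}\bbP(\by\mid\bx,\bu,\bq)\,\bbP(\bq\mid\bu,\bw)$, and averaging this identity against $\bbP(\bu,\bw)$ and re-summing collapses $T_{\bS,\bx,\by}$ into $T_{\bB,\bx,\by}$; the $\eps$-slack version carries the two conditional-independence deficits through this identity, with $1/\alpha_\bS$ entering exactly as in the $\eps$-Markov-blanket case (this is where the restriction $|\bSigma_{\bS'}| \le |\bSigma_\bS|$ is also used). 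If the slack version comes with an extra constant, run the tests below at tolerance $\eps/2$; either way an $\eps$-screening set $\bS'$ with $|\bSigma_{\bS'}| \le |\bSigma_\bS|$ gives $|T_{\bS',\bx,\by} - T_{\bA,\bx,\by}| \le 2\eps/\alpha_\bS$.

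The algorithm is constraint-based: it enumerates candidate subsets $\bB \subseteq \bA$ in order of nondecreasing cardinality, skipping any $\bB$ with $|\bSigma_\bB| > |\bSigma_\bS|$; for each surviving $\bB$ it runs the conditional-independence testing subroutine underlying \cref{thm:AMBA} on the two screening conditions $\bY \ind_\eps \bS\setminus\bB \mid \bX\cup\bB$ and $\bX \ind_\eps \bB\setminus\bS \mid \bS$, and returns the first $\bB$ on which both tests accept (falling back to $\bS' = \bS$ if nothing is accepted, which is always an exact screening set). For correctness, condition on the event that every invoked test is reliable — accepting when its conditional independence holds exactly and rejecting when its deficit is at least $\eps$ — which holds with probability $\ge 1 - \delta$ by a union bound over the $|\bA|^{\cO(k')}$ examined candidates, costing only logarithmic factors. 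On this event the hypothesized screening set $\bB$ is alphabet-admissible and exact on both conditions, hence accepted, so the search halts at cardinality $\le k' = |\bB|$; thus $|\bS'| \le k'$ and $|\bSigma_{\bS'}| \le |\bSigma_\bS|$ by construction, while any accepted $\bS'$ has both deficits below $\eps$, i.e., is an $\eps$-screening set, so the misspecification bound yields $|T_{\bS',\bx,\by} - T_{\bA,\bx,\by}| \le 2\eps/\alpha_\bS$.

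For the sample complexity, the tester of \cref{thm:AMBA} checks a conditional-independence statement whose joint domain has size $N$ using $\wt{\cO}(\sqrt{N}/\eps^2 \cdot \log(1/\delta'))$ samples. For the $\bY$-test here, $N = |\bSigma_\bY|\cdot|\bSigma_{\bS\setminus\bB}|\cdot|\bSigma_{\bX\cup\bB}| = |\bSigma_\bX|\,|\bSigma_\bY|\cdot|\bSigma_{\bS\setminus\bB}|\,|\bSigma_\bB| \le |\bSigma_\bX|\,|\bSigma_\bY|\,|\bSigma_{\bS\cup\bB}| \le |\bSigma_\bX|\,|\bSigma_\bY|\,|\bSigma_\bA|$, using that $\bS\setminus\bB$ and $\bB$ are disjoint with union contained in $\bA$, and that $\bX$, $\bY$, and $\bB$ are mutually disjoint; for the $\bX$-test $N = |\bSigma_\bX|\cdot|\bSigma_{\bB\setminus\bS}|\,|\bSigma_\bS| \le |\bSigma_\bX|\,|\bSigma_\bA|$, which is no larger. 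Taking $\delta' = \delta / |\bA|^{\cO(k')}$ so that $\log(1/\delta') = \wt{\cO}(\log(1/\delta))$, and organizing the search incrementally exactly as for \cref{thm:AMBA} so that it contributes the multiplicative factor $|\bS'| \le k'$, the total sample complexity is $\wt{\cO}\!\left(\frac{|\bS'|}{\eps^2}\,\sqrt{|\bSigma_\bX|\,|\bSigma_\bY|\,|\bSigma_\bA|}\,\log\frac{1}{\delta}\right)$, matching the claim; the extra $\sqrt{|\bSigma_\bY|}$ relative to \cref{thm:AMBA} is precisely the cost of the screening condition that involves $\bY$. I expect the main obstacle to be the $\eps$-slack version of the screening-set misspecification bound — propagating two separate conditional-independence deficits through the collapse identity so that only a single factor $1/\alpha_\bS$ survives — together with the bookkeeping that keeps the union bound over all examined subsets inside $\wt{\cO}(\cdot)$; both, however, are direct analogues of what is already done for \cref{thm:AMBA} and its misspecification bound.
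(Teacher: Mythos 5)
Your proposal follows essentially the same route as the paper: the same cardinality-ordered, alphabet-constrained search with the two \ApproxCondInd\ tests and fallback to $\bS$, the same union bound over $\binom{|\bA|}{k}$ candidates with per-candidate failure budget, the same sample-complexity accounting (reusing samples so only the final round's tester cost, times $|\bS'|$ rounds, survives), and the same decomposition of the error through $T_{\bS,\bx,\by}$. The one piece you leave as a sketch---the $\eps$-slack version of the screening-set misspecification bound---is exactly the technical core of the paper's argument, which handles it by introducing the intermediate quantity $Z_{\bx,\by} = \sum_{\bs \cup \bs'} \frac{1}{\bbP(\bx \mid \bs)} \cdot \bbP(\bx, \bs \cup \bs') \cdot \bbP(\by \mid \bx, \bs')$ and bounding $|T_{\bS,\bx,\by} - Z_{\bx,\by}| \leq \eps/\alpha_{\bS}$ via the $\bY$-condition and $|Z_{\bx,\by} - T_{\bS',\bx,\by}| \leq \eps/\alpha_{\bS}$ via the $\bX$-condition.

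Two small corrections to your accounting. First, your claim that ``only a single factor $1/\alpha_\bS$ survives'' for the screening discrepancy is optimistic: each of the two conditional-independence deficits costs its own $\eps/\alpha_{\bS}$, so the paper obtains $|T_{\bS,\bx,\by} - T_{\bS',\bx,\by}| \leq 2\eps/\alpha_{\bS}$ for the screening part alone (your hedge about running the tests at tolerance $\eps/2$ is the right fix if one insists on combining this with the additional $\eps/\alpha_{\bS}$ from the Markov-blanket step; the paper itself elides that combination). Second, the restriction $|\bSigma_{\bS'}| \leq |\bSigma_{\bS}|$ plays no role in the error bound---it is purely a filter in the search, imposed so that the downstream estimation via \cref{thm:estimation-error} on $\bS'$ is never more expensive than on $\bS$. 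Neither issue changes the substance; the proposal is a faithful reconstruction of the paper's proof.
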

% \begin{restatable}[Beyond approximate Markov blankets]{mytheorem}{minimalgivenapproximatemarkovblanketdiscovery}
% \label{thm:BAMBA}
% Suppose we are given (1) estimation tolerance $\eps > 0$, (2) failure tolerance $\delta > 0$, (3) sample access to a distribution $\bbP(\bV)$, (4) an arbitrary subset $\bA \subseteq \VminusXY$, and (5) an $\eps$-Markov blanket $\bS \subseteq \bA$.
% Then, there is an algorithm that uses $\wt{\cO} \left( \frac{|\bS'|}{\eps^2} \cdot \sqrt{|\bSigma_{\bX}| \cdot |\bSigma_{\bY}| \cdot |\bSigma_{\bA}|} \cdot \log \frac{1}{\delta} \right)$ samples and produces a subset $\bS' \subseteq \bA$ such that $|\bSigma_{\bS'}| \leq |\bSigma_{\bS}|$ and $\Pr \left( | T_{\bS', \bx, \by} - T_{\bA, \bx, \by} | \leq \frac{2 \eps}{\alpha_{\bS}} \right) \geq 1 - \delta$.
% \end{restatable}

As we shall see, unlike many existing causal discovery methods, e.g.\ the PC algorithm \citep{spirtes2000causation}, which perform a sequence of dependent conditional independence checks, our algorithms for \cref{thm:AMBA} and \cref{thm:BAMBA} use a \emph{non-dependent} collection of conditional independence tests, allowing us to avoid error propagation and control the sample complexity of our procedures.

Finally, one can combine the PAC bound results above to yield an overall PAC bound guarantee for solving the causal effect estimation problem as follows.
Since \cref{lem:misspecification-error} tells us that $T_{\bS, \bx, \by}$ and $T_{\bZ, \bx, \by}$ are close whenever $\bS$ is an $\eps$-Markov blanket of $\bX$ with respect to $\bZ$, we can employ the algorithm in \cref{thm:AMBA} to find a subset $\bS \subseteq \bZ$ such that $T_{\bS, \bx, \by} \approx T_{\bZ, \bx, \by}$.
Using the $\eps$-Markov blanket $\bS$, we can further use the algorithm in \cref{thm:BAMBA} to find a subset $\bS' \subseteq \bZ$ such that $T_{\bS', \bx, \by} \approx T_{\bZ, \bx, \by}$.
Depending on whether $|\bSigma_{\bS}|$ or $|\bSigma_{\bS'}|$ is smaller, we can employ \cref{thm:estimation-error} to obtain an estimate $\hatT_{\bS, \bx, \by}$ or $\hatT_{\bS', \bx, \by}$, and use that as an estimate for $T_{\bZ, \bx, \by} = \bbP_{\bx}(\by)$.

In practical situations where one is given a fixed number of samples, we can re-express the results of \cref{thm:estimation-error}, \cref{thm:AMBA} and \cref{thm:BAMBA} in terms of an error upper bound.
Then, one can derive a condition under which a combined approach based on above results estimates $\hatbbP_{\bx}(\by)$ via $\hatT_{\bS',\bx,\by}$, for some $\bS' \subseteq \bZ$, and provably achieves a smaller asymptotic error than directly estimating $\hatbbP_{\bx}(\by)$ via $\hatT_{\bZ,\bx,\by}$.
The condition relies on the positivity of $\alpha_{\bS}$ for subsets $\bS, \bS' \subseteq \bZ$ which are unknown a priori.
However, if one is willing to make lower bound assumptions on these $\alpha$ values, possibly due to background knowledge, then one can obtain a result in the same vein as \cref{thm:pac-causal-effect-estimation-special-case}.

\begin{restatable}[PAC causal effect estimation with positivity]{mytheorem}{paccausaleffectestimationspecialcase}
\label{thm:pac-causal-effect-estimation-special-case}
Suppose we are given (1) $\eps > 0$, (2) $\delta > 0$, (3) $n$ i.i.d.\ samples from $\bbP(\bV)$, (4) an interventional query $\bbP_{\bx}(\by)$, (5) a valid adjustment set $\bZ \subseteq \VminusXY$, and (6) guaranteed that $\alpha_{\bS} \geq \alpha \in (0,1)$ for any $\bS \subseteq \bZ$.
Then, there is an algorithm that outputs a subset $\bS^* \subseteq \bZ$ and an estimate $\hatbbP_{\bx}(\by) = \hatT_{\bS^*,\bx,\by}$ such that $\Pr \left( \left| \hatbbP_{\bx}(\by) - \bbP_{\bx}(\by) \right| \leq \eps \right) \geq 1 - \delta$ for some error term
\[
\eps \in \wt{\cO} \left(
\frac{1}{n} \cdot \frac{|\bSigma_{\bS^*}|}{\alpha} + \frac{1}{\sqrt{n}} \cdot \left( \frac{\sqrt{|\bZ|} \cdot \left( |\bSigma_{\bX}| \cdot |\bSigma_{\bY}| \cdot |\bSigma_{\bZ}| \right)^{\frac{1}{4}}}{\alpha} + \frac{1}{\sqrt{\alpha}} + \sqrt{|\bSigma_{\bS^*}|}
\right) \right).
\]
Moreover, if there exists a Markov blanket $\bS$ of $\bX$ such that $|\bS| \cdot \sqrt{\frac{|\bSigma_{\bX}|}{|\bSigma_{\bZ}|}} < \max \left\{ \frac{|\bSigma_{\bZ}|}{n}, \frac{\alpha_{\bS}}{|\bSigma_{\bZ}|}, \alpha_{\bS}^2 \right\}$, then $|\bS^*| \leq k$.
\end{restatable}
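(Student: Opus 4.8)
\medskip\noindent\textbf{Proof strategy.}\ The plan is to chain the three algorithms of \cref{thm:AMBA}, \cref{thm:BAMBA} and \cref{thm:estimation-error}, run on three disjoint batches of about $n/3$ samples each with per-batch failure budget $\delta/3$, to invert each sample-complexity bound so as to read off the error obtained with $n$ samples, and then to combine everything via a union bound and the triangle inequality. Concretely: (i) run \cref{thm:AMBA} with $\bA = \bZ$ (note an $\eps$-Markov blanket always exists, since $\bZ$ itself is a Markov blanket of $\bX$ with respect to $\bZ$) to obtain $\bS \subseteq \bZ$; (ii) feed the resulting $\eps$-Markov blanket $\bS$ into \cref{thm:BAMBA} with $\bA = \bZ$ to obtain $\bS' \subseteq \bZ$; (iii) set $\bS^*$ to whichever of $\bS'$ and $\bZ$ itself yields the smaller predicted final error, estimate $\hatT_{\bS^*,\bx,\by}$ via \cref{thm:estimation-error}, and output $\hatbbP_{\bx}(\by) = \hatT_{\bS^*,\bx,\by}$.

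The first piece of work is inverting the three bounds. In step (i), using the a priori bound $|\bS| \le |\bZ|$ and $\alpha_{\bZ} \ge \alpha$, choose $\eps_1$ as the smallest value for which the worst-case sample count $\wt{\cO}\bigl((|\bZ|/\eps_1^2)\sqrt{|\bSigma_{\bX}||\bSigma_{\bZ}|}\log(1/\delta)\bigr)$ of \cref{thm:AMBA} is at most $n/3$; this gives $\eps_1 \in \wt{\cO}\bigl(\sqrt{|\bZ|}\,(|\bSigma_{\bX}||\bSigma_{\bZ}|)^{1/4}/\sqrt{n}\bigr)$ and hence $|T_{\bS,\bx,\by} - T_{\bZ,\bx,\by}| \le \eps_1/\alpha_{\bS} \le \eps_1/\alpha$. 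In step (ii), observe that $\bS$ is a (trivial) screening set for $(\bS, \bZ, \bX, \bY)$, so \cref{thm:BAMBA} applies with $k' \le |\bS|$ and $|\bSigma_{\bB}| \le |\bSigma_{\bS}|$; inverting its sample complexity analogously (with tolerance $\eps_2 \ge \eps_1$, which the natural choice satisfies since $|\bSigma_{\bY}| \ge 1$) yields $\bS'$ with $|\bS'| \le |\bS|$, $|\bSigma_{\bS'}| \le |\bSigma_{\bS}|$, and $|T_{\bS',\bx,\by} - T_{\bZ,\bx,\by}| \le 2\eps_2/\alpha_{\bS} \le 2\eps_2/\alpha$, where $\eps_2 \in \wt{\cO}\bigl(\sqrt{|\bZ|}\,(|\bSigma_{\bX}||\bSigma_{\bY}||\bSigma_{\bZ}|)^{1/4}/\sqrt{n}\bigr)$ (a bias term that dominates the step-(i) one). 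In step (iii), inverting \cref{thm:estimation-error} on $\bA = \bS^*$ with $\alpha_{\bS^*} \ge \alpha$ gives $|\hatT_{\bS^*,\bx,\by} - T_{\bS^*,\bx,\by}| \le \eps_3$ with $\eps_3 \in \wt{\cO}\bigl(|\bSigma_{\bS^*}|/(n\alpha) + 1/\sqrt{n\alpha} + \sqrt{|\bSigma_{\bS^*}|/n}\bigr)$. A union bound over the three $\delta/3$ failure events together with $|\hatbbP_{\bx}(\by) - \bbP_{\bx}(\by)| = |\hatT_{\bS^*,\bx,\by} - T_{\bZ,\bx,\by}| \le \eps_3 + 2\eps_2/\alpha$ then produces the stated bound after collecting terms; admissibility of the fallback $\bS^* = \bZ$ (zero bias) makes the minimum in step (iii) well-defined and ensures the bound is never worse than direct estimation on $\bZ$.

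For the ``moreover'' claim, suppose a Markov blanket of $\bX$ with $k$ variables exists. Then \cref{thm:AMBA} returns a set of size $\le k$, and by the screening-set remark above so does step (ii); it therefore suffices to show that under the stated condition the selection rule in step (iii) does not fall back to $\bZ$, i.e.\ that the searched route's predicted error does not exceed that of estimating $\hatT_{\bZ,\bx,\by}$ directly. The only term in the searched-route bound that is not already present in the $\bZ$-direct bound is the \cref{thm:AMBA}-induced misspecification $\Theta\bigl(\sqrt{|\bS|}\,(|\bSigma_{\bX}||\bSigma_{\bZ}|)^{1/4}/(\sqrt{n}\,\alpha_{\bS})\bigr)$ (now carrying $|\bS|$ rather than $|\bZ|$, since the blanket exists). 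Requiring this term to be no larger than \emph{one} of the three constituent terms $|\bSigma_{\bZ}|/(n\alpha_{\bZ})$, $1/\sqrt{n\alpha_{\bZ}}$, $\sqrt{|\bSigma_{\bZ}|/n}$ of the $\bZ$-direct estimation error --- then squaring, cancelling the common factor of $1/n$, and rearranging with $\alpha_{\bZ}$ and $\alpha_{\bS}$ treated on the same footing --- reproduces exactly the three-way maximum $\max\{|\bSigma_{\bZ}|/n,\ \alpha_{\bS}/|\bSigma_{\bZ}|,\ \alpha_{\bS}^2\}$, so the hypothesis forces the searched route to win and hence $|\bS^*| \le k$.

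I expect this last regime analysis to be the main obstacle: lining up the algebra so that the three cases ``AMBA misspecification term $\le$ one of the three direct-estimation terms'' reproduce precisely the stated $\max$, and carefully tracking how $\alpha_{\bZ}$, $\alpha_{\bS}$ and the uniform lower bound $\alpha$ interact --- the positivity assumption being exactly what lets all the $\alpha_{\bullet}$ be replaced by $\alpha$ in the final displayed bound. Everything else --- the three inversions, the union bound over failure events, and the triangle inequality --- is routine bookkeeping.
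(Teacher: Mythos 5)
Your proposal matches the paper's proof in all essentials: chain \AMBAshort\ then \BAMBAshort\ then the empirical estimator, invert each sample-complexity bound into an error bound for $n$ samples (these inversions are exactly the paper's Corollaries \ref{cor:estimation-error}, \ref{cor:AMBA}, \ref{cor:BAMBA}), combine by triangle inequality and a union bound, and select between $\bS'$ and $\bZ$ by comparing the \AMBAshort\ misspecification term against each of the three direct-estimation terms, which is precisely how the paper derives the three-way maximum in \cref{eq:AMBA-decision}. The only cosmetic differences are your explicit three-way sample split and phrasing the selection rule as "smaller predicted error" rather than the paper's checkable inequality; neither changes the argument.
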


\subsection{Overview of technical results}

We now review some additional technical results used to establish our main results, though we note that these results may be of independent interest for future work.
While our notation and language is closer to Pearl's graphical causal modeling framework \citep{pearl2009causal}, all of our results are compatible with both the PO and graphical frameworks as long as \cref{eqn:covariate-adjustment} holds for the given $\bZ$.
This is because our analysis is purely probabilistic in nature, with the causal interpretation always going back to assuming that $\bbP_{\bx}(\by) = T_{\bZ, \bx, \by}$ as a starting point.

The sample complexity bound of \cref{thm:estimation-error} heavily relies on a common technique in the property testing literature known as Poissonization, e.g.\ see \cite[Section 4.3]{valiant2008testing}, \cite[Appendix D.3]{canonne2020survey}, and \cite[Appendix C]{canonne2022topics}.
The high level idea is that instead of drawing $n$ i.i.d.\ samples, we will draw $N_{\Pois} \sim \Pois(n)$ i.i.d\ samples, where $N_{\Pois}$ is a random Poisson variable, so that the random count for each realized value will be independent.
\cref{sec:poissonization} describes the Poissonization sampling technique in further detail.

In our error analyses in \cref{thm:AMBA} and \cref{thm:BAMBA}, we manipulate approximate conditional independence terms $\Delta_{\bA \ind \bB \mid \bC}$ (from \cref{def:approx-cond-ind}) and adjustment terms $T_{\bA, \bx, \by}$ (from \cref{eqn:covariate-adjustment}) for various subsets $\bA, \bB, \bC \subseteq \bV$.
By standard probability manipulations, one can easily obtain the following alternative representation of $\Delta_{\bA \ind \bB \mid \bC}$ for arbitrary disjoint subsets $\bA, \bB, \bC \subseteq \bV$.

\begin{equation}
\label{eq:approx-cond-ind-alternative}
\begin{aligned}
\Delta_{\bA \ind \bB \mid \bC}
&= \sum\nolimits_{\ba, \bb, \bc} \bbP(\bc) \cdot | \bbP(\ba, \bb \mid \bc) - \bbP(\ba \mid \bc) \cdot \bbP(\bb \mid \bc) |
\\
&= \sum\nolimits_{\ba, \bb, \bc} \bbP(\ba, \bc) \cdot | \bbP(\bb \mid \ba, \bc) - \bbP(\bb \mid \bc) |
\leq \eps
\end{aligned}
\end{equation}

Meanwhile, for any arbitrary disjoint subsets $\bA, \bB \subseteq \VminusXY$, $T_{\bA, \bx, \by}$ can be re-expressed in multiple ways (depending on the desired analytical use case) using law of total probability as

\begin{equation}
\label{eq:T-alternative}
T_{\bA, \bx, \by}
= \sum\nolimits_{\ba} \bbP(\by \mid \ba, \bx) \cdot \bbP(\ba)
% = \sum_{\ba, \bb} \bbP(\by \mid \ba, \bb, \bx) \cdot \bbP(\bb \mid \ba, \bx) \cdot \bbP(\ba)
= \sum\nolimits_{\ba, \bb} \bbP(\by \mid \ba, \bx) \cdot \bbP(\ba) \cdot \bbP(\bb \mid \ba)
\end{equation}

The correctness of \cref{thm:AMBA} follows from the following result that $T_{\bS, \bx, \by}$ and $T_{\bA, \bx, \by}$ are close whenever $\bS$ is an $\eps$-Markov blanket of $\bS$ with respect to $\bA$, and that there is a sample efficient way to obtain such an $\eps$-Markov blanket.

\begin{restatable}[Misspecification error]{mylemma}{misspecificationerror}
\label{lem:misspecification-error}
If $\bS \subseteq \bA \subseteq \VminusXY$ such that $\bX \ind_\eps \bA \setminus \bS \mid \bS$, then $|T_{\bS,\bx,\by} - T_{\bA,\bx,\by}| \leq \frac{\eps}{\alpha_{\bS}}$.
\end{restatable}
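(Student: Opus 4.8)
The plan is to write $\bB := \bA \setminus \bS$ and compare the two adjustment functionals after re-expressing each so that they share a common outcome factor $\bbP(\by \mid \bx, \bs, \bb)$. Starting from \cref{eq:T-alternative} (with $\bB$ playing the role of the extra variables beyond $\bS$), one has $T_{\bA, \bx, \by} = \sum_{\bs, \bb} \bbP(\by \mid \bx, \bs, \bb)\, \bbP(\bb \mid \bs)\, \bbP(\bs)$, while applying the law of total probability to $\bb$ inside $\bbP(\by \mid \bx, \bs)$ gives $T_{\bS, \bx, \by} = \sum_{\bs, \bb} \bbP(\by \mid \bx, \bs, \bb)\, \bbP(\bb \mid \bx, \bs)\, \bbP(\bs)$. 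Subtracting, the common factors remain and we get
\[
T_{\bA, \bx, \by} - T_{\bS, \bx, \by} = \sum\nolimits_{\bs, \bb} \bbP(\by \mid \bx, \bs, \bb)\, \bbP(\bs) \left( \bbP(\bb \mid \bs) - \bbP(\bb \mid \bx, \bs) \right).
\]

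Next I would take absolute values, bound $\bbP(\by \mid \bx, \bs, \bb) \leq 1$, and apply the triangle inequality to obtain $|T_{\bA, \bx, \by} - T_{\bS, \bx, \by}| \leq \sum_{\bs, \bb} \bbP(\bs)\, |\bbP(\bb \mid \bs) - \bbP(\bb \mid \bx, \bs)|$. It then remains to relate this right-hand side to the hypothesis $\bX \ind_\eps \bB \mid \bS$, i.e.\ $\Delta_{\bX \ind \bB \mid \bS} \leq \eps$.

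For this last step I would use the alternative form of $\Delta$ from \cref{eq:approx-cond-ind-alternative}, $\Delta_{\bX \ind \bB \mid \bS} = \sum_{\bx', \bb, \bs} \bbP(\bx', \bs)\, |\bbP(\bb \mid \bx', \bs) - \bbP(\bb \mid \bs)|$, and restrict the outer sum to the single realization $\bx' = \bx$; since every summand is non-negative, this partial sum is at most $\eps$. Finally, by the definition of $\alpha_\bS$ in \cref{eq:alpha-def} we have $\bbP(\bx, \bs) = \bbP(\bx \mid \bs)\, \bbP(\bs) \geq \alpha_\bS\, \bbP(\bs)$ for every $\bs$, so
\[
\alpha_\bS \sum\nolimits_{\bs, \bb} \bbP(\bs)\, |\bbP(\bb \mid \bs) - \bbP(\bb \mid \bx, \bs)| \;\leq\; \sum\nolimits_{\bs, \bb} \bbP(\bx, \bs)\, |\bbP(\bb \mid \bs) - \bbP(\bb \mid \bx, \bs)| \;\leq\; \eps,
\]
which rearranges to the claimed $|T_{\bA, \bx, \by} - T_{\bS, \bx, \by}| \leq \eps / \alpha_\bS$.

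The computation is short, so I do not expect a serious obstacle; the one place requiring care is recognizing that the outcome term $\bbP(\by \mid \bx, \bs, \bb)$ — about which the approximate independence hypothesis says nothing, since $\bY$ is disjoint from $\bA$ — must be made common to both functionals before subtracting, and that the factor $1/\alpha_\bS$ is precisely the price of passing from the full sum over $\bX$-values in $\Delta_{\bX \ind \bB \mid \bS}$ to the single slice at $\bX = \bx$ (implicitly using positivity, so that $\alpha_\bS > 0$).
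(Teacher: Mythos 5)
Your proposal is correct and follows essentially the same route as the paper's proof: both expand $T_{\bS,\bx,\by}$ and $T_{\bA,\bx,\by}$ over the joint alphabet of $\bA$ so that the outcome factor $\bbP(\by \mid \bx, \bs, \bb)$ is common, bound it by $1$ after the triangle inequality, and then trade $\bbP(\bs)$ for $\bbP(\bx,\bs)/\alpha_{\bS}$ to land on the $\bx$-slice of $\Delta_{\bX \ind \bA \setminus \bS \mid \bS}$ via \cref{eq:approx-cond-ind-alternative}. The only difference is the order in which the division by $\bbP(\bx\mid\bs)$ and the triangle inequality are applied, which is immaterial.
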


We additionally complement \cref{lem:misspecification-error} with a hardness result of \cref{lem:misspecification-error-lower-bound}.

\begin{restatable}[Misspecification error lower bound]{mylemma}{misspecificationerrorlowerbound}
\label{lem:misspecification-error-lower-bound}
Let $0 \leq \sqrt{\eps} \leq \alpha \leq 1/2$.
There exists $\bbP(\bV)$ such that (i) $\bZ$ is a valid adjustment set, (ii) $\bS \subset \bZ$ satisfies $\bX \ind_\eps \bZ \setminus \bS \mid \bS$, (iii) $\alpha_{\bS} \geq \alpha$, and (iv) $|T_{\bS,\bx,\by} - T_{\bZ,\bx,\by}| \geq \frac{\eps}{16 \alpha}$.
\end{restatable}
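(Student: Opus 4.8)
The plan is to exhibit a single explicit structural causal model that simultaneously witnesses all four conditions; it suffices to use the minimal instance of \cref{fig:intro-motivation} with $k = 0$. Concretely, I would take $\bX = \{X\}$, $\bY = \{Y\}$, $\bZ = \{W\}$ with all three variables binary, the DAG $W \to X$, $W \to Y$, $X \to Y$, and set $\bS = \emptyset \subset \bZ$. Since $W$ blocks the unique backdoor path $X \leftarrow W \to Y$ and is not a descendant of $X$, the set $\bZ$ satisfies the backdoor criterion, so $\bbP_{\bx}(\by) = T_{\bZ,\bx,\by}$ for every choice of parameters; this gives (i) for free. Moreover $\bZ \setminus \bS = \bZ$, so (ii) reduces to the single requirement $\Delta_{\bX \ind \bZ} \le \eps$. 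I would then fix the query to $\bx = (X = 1)$, $\by = (Y = 1)$; we may assume $\eps > 0$ (else (iv) is vacuous), so $\alpha \ge \sqrt{\eps} > 0$.

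Next I would choose the conditional probability tables so as to route the confounding into the $Y$-mechanism while keeping only an $\eps$-sized perturbation in the $X$-mechanism. Set $\bbP(W = 1) = \tfrac12$; $p_0 := \bbP(X = 1 \mid W = 0) = \alpha + \tfrac{\eps}{2}$ and $p_1 := \bbP(X = 1 \mid W = 1) = \alpha - \tfrac{\eps}{2}$; $r_{1,0} := \bbP(Y = 1 \mid X = 1, W = 0) = 1$ and $r_{1,1} := \bbP(Y = 1 \mid X = 1, W = 1) = 0$; and $\bbP(Y = 1 \mid X = 0, W = w)$ set arbitrarily (this is irrelevant to the query and to $\Delta_{\bX \ind \bZ}$). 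The hypotheses $0 \le \sqrt{\eps} \le \alpha \le \tfrac12$ give $\eps \le \alpha^2 \le \tfrac14$, so $p_0, p_1 \in [0,1]$ and the model is valid.

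It then remains to verify (ii)--(iv) by direct computation. For (iii): $\alpha_{\bS} = \bbP(X = 1) = \tfrac12 p_0 + \tfrac12 p_1 = \alpha$. For (ii): unwinding \cref{def:approx-cond-ind} with empty conditioning set gives $\Delta_{\bX \ind \bZ} = \sum_{x', w} |\bbP(X = x', W = w) - \bbP(X = x')\,\bbP(W = w)|$, and a one-line computation shows each of the four summands has absolute value $\tfrac14 |p_0 - p_1|$, so $\Delta_{\bX \ind \bZ} = |p_0 - p_1| = \eps$, i.e.\ $\bX \ind_{\eps} \bZ$. For (iv): the truncated factorization gives $T_{\bZ,\bx,\by} = \tfrac12 r_{1,0} + \tfrac12 r_{1,1} = \tfrac12$, while $T_{\emptyset,\bx,\by} = \bbP(Y=1 \mid X=1) = \bbP(Y=1, X=1)/\bbP(X=1)$; expanding the numerator and denominator of the latter and simplifying (the numerator of the difference telescopes to a single product term) yields
\[
T_{\emptyset,\bx,\by} - T_{\bZ,\bx,\by} = \frac{(r_{1,0} - r_{1,1})(p_0 - p_1)}{2(p_0 + p_1)} = \frac{1 \cdot \eps}{2 \cdot 2\alpha} = \frac{\eps}{4\alpha} \ge \frac{\eps}{16\alpha},
\]
which is (iv); the construction in fact beats the stated constant, so $16$ is not tight here.

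The main --- and essentially only --- obstacle is the joint design of the parameters: there is a genuine tension between wanting $X$ nearly independent of $W$ (forcing $p_0 \approx p_1$ to keep $\Delta_{\bX \ind \bZ} \le \eps$) and wanting the omitted variable $W$ to matter for the adjustment (wanting a large bias). This is resolved by pushing all the confounding strength into the maximal $Y$-gap $r_{1,0} - r_{1,1} = 1$ while permitting only the $\eps$-gap $p_0 - p_1 = \eps$, and then tuning $p_0 + p_1 = 2\alpha$ so that $\alpha_{\bS}$ lands exactly on $\alpha$; the displayed identity for $T_{\emptyset,\bx,\by} - T_{\bZ,\bx,\by}$ is the one computation worth doing carefully. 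If a nonempty $\bS$ is preferred for aesthetic consistency with the notion of an $\eps$-Markov blanket, one may pad both $\bZ$ and $\bS$ with a single shared isolated binary variable, which leaves all four quantities unchanged.
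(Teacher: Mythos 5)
Your construction is correct, and it takes a genuinely different (and leaner) route than the paper. The paper builds a four-variable distribution over $\{A,B,X,Y\}$ with $\bS=\{A\}\subset\{A,B\}=\bZ$, whose conditional probability tables are tuned so that $\Delta_{X\ind B\mid A}=\eps$ and $\alpha_{\bS}=\alpha-\eps/4$, and then extracts the bias $\frac{\eps}{16\alpha}$ from a somewhat involved identity (with the arithmetic verifications relegated to an appendix). You instead use the minimal confounder triangle $W\to X$, $W\to Y$, $X\to Y$ with $\bZ=\{W\}$ and $\bS=\emptyset$, which collapses condition (ii) to marginal near-independence of $X$ and $W$ and yields the clean closed form $T_{\emptyset,\bx,\by}-T_{\bZ,\bx,\by}=\frac{(p_0-p_1)(r_{1,0}-r_{1,1})}{2(p_0+p_1)}=\frac{\eps}{4\alpha}$; I verified the telescoping of the numerator, the computation $\Delta_{X\ind W}=|p_0-p_1|=\eps$, the parameter validity under $\eps\le\alpha^2\le\alpha/2$, and that $\alpha_{\emptyset}=\bbP(X=1)=\alpha$ under the paper's definition \cref{eq:alpha-def}. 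Your construction even improves the constant ($\eps/(4\alpha)$ versus $\eps/(16\alpha)$), and the existential statement only requires $\bS\subset\bZ$, which $\emptyset\subset\{W\}$ satisfies; your padding remark correctly handles the cosmetic preference for a nonempty $\bS$. The one thing the paper's heavier construction arguably buys is a witness in which the conditioning set $\bS$ is nontrivial without padding, showing the tightness of \cref{lem:misspecification-error} persists for genuinely conditional $\eps$-Markov blankets, but this is not needed for the lemma as stated.
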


Similar in spirit to \cref{thm:AMBA}, the correctness of \cref{thm:BAMBA} relies on the relating $T_{\bS', \bx, \by}$ and $T_{\bS, \bx, \by}$ via some conditional independence relations.
Given an $\eps$-Markov blanket $\bS \subseteq \bA$, we search for a minimal-sized screening set for $(\bS, \bX, \bY)$.
This is a sound approach because of \cref{lem:minimal-soundness}.

\begin{restatable}[Adjustment soundness]{mylemma}{minimalsoundness}
\label{lem:minimal-soundness}
Let $\bA \subseteq \VminusXY$ be an arbitrary subset and $\bS \subseteq \bA$.
If $\bS'$ is a screening set for $(\bS, \bX, \bY)$, then $T_{\bS', \bx, \by} = T_{\bS, \bx, \by}$.
\end{restatable}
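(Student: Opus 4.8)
The plan is to interpolate between $\bS$ and $\bS'$ through their union $\bW := \bS \cup \bS'$, proving the chain $T_{\bS,\bx,\by} = T_{\bW,\bx,\by} = T_{\bS',\bx,\by}$. The first equality will be driven by the ``treatment-side'' screening condition $\bX \ind \bS' \setminus \bS \mid \bS$, and the second by the ``outcome-side'' screening condition $\bY \ind \bS \setminus \bS' \mid \bX \cup \bS'$. Since $\bS \subseteq \bA$ and $\bS' \subseteq \bA$, we have $\bS \subseteq \bW \subseteq \bA \subseteq \VminusXY$, so all three of $T_{\bS,\bx,\by}$, $T_{\bW,\bx,\by}$, $T_{\bS',\bx,\by}$ are well-defined adjustment quantities.

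For the first equality, I would note that $\bW \setminus \bS = \bS' \setminus \bS$, so the screening condition $\bX \ind \bS' \setminus \bS \mid \bS$ is precisely the hypothesis of \cref{lem:misspecification-error} applied with tolerance $\eps = 0$ and with the generic set $\bA$ there instantiated as $\bW$. This gives $|T_{\bS,\bx,\by} - T_{\bW,\bx,\by}| \leq 0$, i.e., $T_{\bS,\bx,\by} = T_{\bW,\bx,\by}$.

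For the second equality, I would expand $T_{\bW,\bx,\by}$ directly. Write $\bD := \bS \setminus \bS'$ and note $\bW = \bS' \cup \bD$ with $\bS'$ and $\bD$ disjoint, so every $\bw \in \bSigma_{\bW}$ decomposes uniquely as a pair $(\bs', \bd)$ with $\bs' \in \bSigma_{\bS'}$, $\bd \in \bSigma_{\bD}$. Using the first representation in \cref{eq:T-alternative},
\[
T_{\bW,\bx,\by} = \sum\nolimits_{\bs', \bd} \bbP(\by \mid \bs', \bd, \bx)\,\bbP(\bs', \bd).
\]
The screening condition $\bY \ind \bS \setminus \bS' \mid \bX \cup \bS'$, i.e. $\bY \ind \bD \mid \bX \cup \bS'$, yields $\bbP(\by \mid \bs', \bd, \bx) = \bbP(\by \mid \bs', \bx)$ for every $(\bs', \bd)$ with positive probability (terms of zero probability contribute $0$ and can be dropped). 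Substituting and marginalizing $\bd$ via $\sum_{\bd} \bbP(\bs', \bd) = \bbP(\bs')$ gives $T_{\bW,\bx,\by} = \sum_{\bs'} \bbP(\by \mid \bs', \bx)\,\bbP(\bs') = T_{\bS',\bx,\by}$. Chaining the two equalities completes the proof.

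I do not expect a substantive obstacle: once $\bW$ is introduced the argument is a short computation. The only points needing care are bookkeeping ones — verifying that \cref{lem:misspecification-error} is legitimately invoked with $\bW$ in the role of its adjustment set (which requires $\bW \subseteq \VminusXY$ and the identity $\bW \setminus \bS = \bS' \setminus \bS$), ensuring each screening condition is used on its correct side (the $\bX$-condition collapses $\bS$ \emph{up} to $\bW$; the $\bY$-condition collapses $\bW$ \emph{down} to $\bS'$), and handling zero-probability realizations when conditioning, which is routine.
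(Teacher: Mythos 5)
Your proof is correct and follows essentially the same route as the paper: the paper's proof also passes through the union $\bS \cup \bS'$, using $\bX \ind \bS' \setminus \bS \mid \bS$ to replace $\bbP(\bs'\setminus\bs \mid \bx,\bs)$ by $\bbP(\bs'\setminus\bs \mid \bs)$ (lifting $T_{\bS,\bx,\by}$ to $T_{\bS\cup\bS',\bx,\by}$) and then $\bY \ind \bS\setminus\bS' \mid \bX\cup\bS'$ to collapse down to $T_{\bS',\bx,\by}$. Your only cosmetic departure is citing \cref{lem:misspecification-error} with $\eps=0$ for the first equality rather than writing out the identical two-line computation.
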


While this approach is sound, it may not discover the smallest possible subset satisfying $T_{\bS', \bx, \by} = T_{\bS, \bx, \by}$ for any choice of $\bS \subseteq \bA$.
Nevertheless, there exists special scenarios in which this approach is also complete.
We give a full statement of such a graphical condition and a completeness proof in \cref{sec:appendix-completness}; here, we provide some brief intuition for the curious reader.
In particular, the necessity of the $\bY \ind \bS \setminus \bS' \mid \bX \cup \bS'$ condition can be appreciated by setting $\bA = \{A_1, \ldots, A_k, B\}$, $\bS = \{A_1, \ldots, A_k\}$, and $\bS' = \{B\}$ in \cref{fig:intro-motivation}.
In this setup, we see that $\bS$ is a valid backdoor adjustment set.
% In this setup, we see that $\bS$ is a valid backdoor adjustment set because it blocks all non-causal backdoor paths from $X$ to $Y$, $Y \ind \{A_1, \ldots, A_k\} \mid \{X, B\}$, and $X \ind B \mid \{A_1, \ldots, A_k\}$.
Observe that conditioning on $X$ blocks any paths from $\bS$ to $Y$ that has a causal path from $X$ to $Y$ as a subpath.
So, $\bY \ind \bS \setminus \bS' \mid \bX \cup \bS'$ will imply that $\bS'$ also blocks non-causal $X$ to $Y$ paths since any such path passing through $\bS \setminus \bS'$ has to pass through $\bS'$ to reach $Y$.

% In the proof of \cref{thm:BAMBA}, we will relate $T_{\bS_1, \bx, \by}$ and $T_{\bS_2, \bx, \by}$ for differing subsets $\bS_1 \neq \bS_2$.

Finally, there is nothing technically special about \cref{thm:pac-causal-effect-estimation-special-case} besides simply combining the results \cref{thm:estimation-error}, \cref{thm:AMBA}, and \cref{thm:BAMBA} in a straightforward fashion.

\subsection{Organization of the paper}

The remainder of the paper is devoted to establishing and providing intuition behind our results; related work, additional derivations, and full proofs are deferred to the appendix.
% The remainder of the paper is devoted to establishing the results described above, and providing further intuition for our proof techniques.
% We focus on giving intuition and proof sketches while deferring full proofs to the appendix.
In \cref{sec:prelims}, we establish our notation and describe necessary preliminaries.
In \cref{sec:sample-complexity,sec:AMBA,sec:BAMBA}, we prove our main results and their associated technical results.
In particular, we prove \cref{thm:estimation-error} in \cref{sec:sample-complexity} and the others (\cref{thm:AMBA}, \cref{thm:BAMBA}, \cref{lem:misspecification-error}, \cref{lem:misspecification-error-lower-bound}, and \cref{lem:minimal-soundness}) in \cref{sec:markov-blanket}.
We conclude with a summary and a discussion of open problems in \cref{sec:conclusion}.
% In \cref{sec:related-work}, we discuss related work on causal effect identification, causal discovery, and causal effect estimation using covariate adjustment, and we discuss special cases of our results to relate them to existing results in the graphical framework.

\section{Preliminaries}
\label{sec:prelims}

% \paragraph{Notation.}
\textbf{Notation.}
We use capital letters for random variables and lowercase letters for the realizations, e.g.\ $X = x$, $Y = y$, etc.
We use bold letters for sets of variables and write $\bbP(\bA = \ba)$ as $\bbP(\ba)$ as shorthand.
We denote the alphabet of the variable $V$ as $\bSigma_V$, and extend this to sets by letting $\bSigma_\bA = \bSigma_{V_1} \times \ldots \times \bSigma_{V_k}$, where $\bA = \{ V_1, \ldots, V_k \}$ and $\times$ denotes the Cartesian product.
To lighten notation, summations are always taken over the entire alphabet of the index, i.e., $\sum_\ba f(\ba)$ denotes $\sum_{\ba \in \Sigma_\bA} f(\ba)$.
We employ the standard asymptotic notations $\cO(\cdot)$, $\Omega(\cdot)$ $\Theta(\cdot)$, and $\wt{\cO}(\cdot)$.

Throughout this work, we will denote $\bX$ as the intervened treatment variables and $\bY$ as the outcome variables of interest.
For some $\bx \in \bSigma_\bX$ and $\by \in \bSigma$, our goal is to estimate $\bbP_\bx(\by)$, which denotes the probability that $\bY$ takes on the value $\by$ if we intervene to set $\bX$ equal to $\bx$.

% \paragraph{A short note on valid adjustment sets}
\noindent\textbf{A short note on valid adjustment sets.}
In this work, we take as our starting point knowledge of some $\bZ \subset \VminusXY$ that is a valid adjustment set for $\bbP_\bx(\by)$, i.e., we assume that \cref{eqn:covariate-adjustment} holds for some known set $\bZ \subset \VminusXY$.
Instead of starting directly from this point, one may prefer to derive \cref{eqn:covariate-adjustment} from more foundational assumptions.
% We emphasize that our results hold in \textit{any} framework from which \cref{eqn:covariate-adjustment} can be derived, and briefly describe two such examples here.
%
For example, in the potential outcomes (PO) framework, $\bbP_\bx(\by)$ is usually written as $\bbP(\bY(\bx) = \by)$, where $\bY(\bx)$ is a random variable denoting the potential outcome under an intervention that sets $\bX$ to $\bx$.
Then, \cref{eqn:covariate-adjustment} is implied under the standard consistency assumption and conditional ignorability of $\bX$ with respect to $\bZ$; see \cref{lem:Z-is-valid-under-SUTVA-and-conditional-ignorability}.
Alternatively, \cref{eqn:covariate-adjustment} can be derived in the graphical causality framework, which relates the distributions $\bbP(\bV)$ and $\bbP_\bx(\bY)$ to a (possibly unknown) \textit{causal graph} $\cG$ over the random variables $\bV$.
Typically, on might assume that $\cG$ an \emph{acyclic directed mixed graph} (ADMG), which can contain both directed edges (of the form $V_1 \to V_2$) and bidirected edges (of the form $V_1 \leftrightarrow V_2$), but no directed cycles; if there are also no bidirected edges, then $\cG$ is called a \textit{directed acyclic graph} (DAG).
%
% For the sake of our discussion, the causal graph $\cG$ may consist of both directed edges (of the form $V_1 \to V_2$) and bidirected edges (of the form $V_1 \leftrightarrow V_2$); intuitively, a direct edge $V_1 \to V_2$ indicates that $V_1$ is a cause of $V_2$, and a bidirected edge $V_1 \leftrightarrow V_2$ indicates that $V_1$ and $V_2$ share an unobserved common cause.
%
% Under the restriction that there are no directed cycles, $\cG$ is called an \textit{acyclic directed mixed graph} (ADMG); if there are also no bidirected edges, then $\cG$ is called a \textit{directed acyclic graph} (DAG).
%
An ADMG (resp.\ DAGs) $\cG$ defines a three-way relation between subsets $\bA, \bB, \bC \subseteq \bV$ called \textit{m-separation} (resp.\ \textit{d-separation}), and the distributions $\bbP(\bV)$ and $\bbP_\bx(\by)$ are assumed to be related via m-separation in $\cG$ and related graphs.
Then, \cref{eqn:covariate-adjustment} can be derived from these assumptions and graphical conditions on $\bZ$, see \cref{sec:related-work} for examples of such conditions.

% \begin{mydefinition}[Markov]
% \label{def:markov}
% A probability distribution $\bbP(\bV)$ is said to be Markov with respect to a DAG $\cG$ if d-separation in $\cG$ implies conditional independence in $\bbP(\bV)$.
% \end{mydefinition}

\subsection{Poissonization}\label{sec:poissonization}

We now describe a common technique known as Poissonization; e.g.\ see \cite{valiant2008testing,canonne2020survey,canonne2022topics}.
% Our sample complexity bounds heavily rely on a common technique in the property testing literature known as Poissonization; e.g.\ see \cite[Section 4.3]{valiant2008testing}, \cite[Appendix D.3]{canonne2020survey}, and \cite[Appendix C]{canonne2022topics}.
%
When drawing $n$ i.i.d.\ samples from an underlying distribution $\bbP(X)$ over a domain $\bSigma_X = \{1, \ldots, k\}$, the vector of counts $(N_1, \ldots, N_k)$ follows a multinomial distribution with parameters $n$ and $(\bbP(X=1), \ldots, \bbP(X=k))$, where each random variable $N_i$ is the number of times we observe $i \in [k]$ amongst the $n = N_1 + \ldots + N_k$ drawn samples.
Oftentimes, in analysis, we would like that the random variables $N_1, \ldots, N_k$ are independent; unfortunately, this is false in the standard sampling setting since $N_1, \ldots, N_k$ are negatively correlated.

Instead of directly drawing $n$ i.i.d.\ samples, the idea behind Poissonization is to modify the sampling process by first sampling a Poisson number $N_{\Pois} \sim \Pois(n)$ with mean $n$ and then drawing $N_{\Pois}$ i.i.d\ samples.
By standard Poisson concentration bounds (e.g., \cref{lem:poisson-concentration} below), $N_\Pois$ is of order $\cO(n)$ with high probability; thus, PAC bounds for the Poissonized setting are interchangeable with those in the standard setting up to constant factors (see e.g., Lemmas C.1 and C.2 in \cite{canonne2022topics}).
In the Poissonized setting, the resulting count vector has a few desirable properties.

\begin{mylemma}[Appendix C of \cite{canonne2022topics}]
\label{lem:poissonization}
Let $(N_1, \ldots, N_k)$ be the sample counts in the Poissonized sampling process such that $N_1 + \ldots + N_k = N_{\Pois} \sim \Pois(n)$.
The following hold:\\
(a) The random count variables $N_1, \ldots, N_k$ are mutually independent.\\
(b) For each $i \in [k]$, we have $N_i \sim \Pois(n \cdot \bbP(X = i))$.\\
(c) For each $i \in [k]$ and natural number $n'$, we have $(N_i \mid N_{\Pois} = n') \sim \Bin(n', \bbP(X = i))$.
% Then, the following statements hold:
% \begin{itemize}
%     \item[(a)] The random count variables $N_1, \ldots, N_k$ are mutually independent
%     \item[(b)] For each $i \in [k]$, we have $N_i \sim \Pois(n \cdot \bbP(X = i))$
%     \item[(c)] For each $i \in [k]$ and natural number $n'$, we have $(N_i \mid N_{\Pois} = n') \sim \Bin(n', \bbP(X = i))$.
% \end{itemize}
\end{mylemma}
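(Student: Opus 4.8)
The plan is to establish part (c) first by unwinding the definition of the Poissonized sampling process, and then to obtain parts (a) and (b) together by computing the joint probability mass function of $(N_1, \ldots, N_k)$ via conditioning on $N_{\Pois}$. Throughout, write $p_i = \bbP(X = i)$ for $i \in [k]$.

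For (c), condition on $N_{\Pois} = n'$. By construction, the process then draws exactly $n'$ i.i.d.\ samples $Z_1, \ldots, Z_{n'}$ from $\bbP(X)$, and $N_i = \sum_{j=1}^{n'} \kron[Z_j = i]$ is a sum of $n'$ i.i.d.\ $\Bern(p_i)$ indicators; hence $(N_i \mid N_{\Pois} = n') \sim \Bin(n', p_i)$ directly from the definition of the binomial distribution (the case $n' = 0$ being the degenerate point mass at $0$). The same conditioning argument shows that the full vector $(N_1, \ldots, N_k)$, given $N_{\Pois} = n'$, is multinomial with parameters $n'$ and $(p_1, \ldots, p_k)$; I will reuse this fact below.

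For (a) and (b), I would compute, for arbitrary nonnegative integers $n_1, \ldots, n_k$ with $n' := n_1 + \cdots + n_k$, the joint mass
\[
\bbP(N_1 = n_1, \ldots, N_k = n_k) = \bbP(N_{\Pois} = n') \cdot \binom{n'}{n_1, \ldots, n_k} \prod_{i=1}^k p_i^{n_i},
\]
which is valid because the event $\{N_1 = n_1, \ldots, N_k = n_k\}$ forces $N_{\Pois} = n'$, and conditionally on that the multinomial law from the previous paragraph applies. Substituting $\bbP(N_{\Pois} = n') = e^{-n} n^{n'}/n'!$ and cancelling $n'!$ leaves $e^{-n} \prod_{i=1}^k (n p_i)^{n_i}/n_i!$. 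Finally, using $\sum_{i=1}^k p_i = 1$ to write $e^{-n} = \prod_{i=1}^k e^{-n p_i}$, the joint mass function factors as $\prod_{i=1}^k e^{-n p_i} (n p_i)^{n_i}/n_i!$, i.e., as a product of $\Pois(n p_i)$ mass functions. Marginalizing out the other coordinates yields (b), and the fact that the joint mass function equals the product of these marginals is precisely the statement of mutual independence in (a).

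I do not anticipate a serious obstacle here — this is the classical ``Poisson splitting'' computation. The only points requiring minor care are (i) the degenerate case $p_i = 0$, where $N_i \equiv 0$, $\Pois(0)$ is the point mass at $0$, and the displayed identities still hold under the convention $0^0 = 1$; and (ii) observing that (a) asserts \emph{mutual} (not merely pairwise) independence, which is exactly what the full product factorization above delivers.
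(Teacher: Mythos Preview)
Your proof is correct and is the standard ``Poisson splitting'' argument. Note, however, that the paper does not actually prove this lemma: it is stated with a citation to Appendix~C of \cite{canonne2022topics} and used as a black box, so there is no paper proof to compare against. Your write-up would serve perfectly well as a self-contained justification of the cited result.
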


\subsection{Concentration bounds}

The first two terms in \cref{thm:estimation-error} come from splitting the values of $\bA$ into two sets according to some cutoff parameter $\tau$ that we later optimize.
In \cref{eq:J-event}, we define an event $\cE^{J}_{\geq \tau}$; the $\wt{\cO} \left( \frac{|\bSigma_\bA|}{\eps \alpha_\bA} \right)$ term captures the number of samples needed to ensure the $\cE^{J}_{\geq \tau}$ holds with high probability, which we compute using a standard Poisson concentration bound (\cref{lem:poisson-concentration}).
Next, we define a random variable $J_{\geq \tau}$ (\cref{eq:J-geq-tau-defn}); the $\wt{\cO} \left( \frac{1}{\eps^2 \alpha_\bA} \right)$ term comes from a concentration bound on this term.
In particular, the bulk of our analysis is devoted to showing that $J_{\geq \tau}$ is sub-Gaussian conditioned on $\cE^{J}_{\geq \tau}$, for this result, we require Lemmas \ref{lem:subgaussian-sum}, \ref{lem:subgaussian-take-max}, and \ref{lem:hoeffding} below.

\begin{mylemma}[Poisson concentration; e.g.\ see Theorem A.8 in \cite{canonne2022topics}]
\label{lem:poisson-concentration}
Let $N \sim \Pois(n)$ be a Poisson random variable with parameter $n$.
Then, for any $0 < t < n$, we have $\Pr(N \leq n - t) \leq \exp \left( -\frac{t^2}{2(n+t)} \right)$.
In particular, setting $t = n/2$, we have $\Pr(N \leq n/2) \leq \exp \left( -\frac{n}{12} \right)$.
\end{mylemma}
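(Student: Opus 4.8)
The plan is to use the standard Chernoff--Cram\'er (exponential moment) method on the lower tail of $N$. Since the threshold $n-t$ lies below the mean $\bbE[N] = n$, I would apply Markov's inequality to the random variable $e^{-\lambda N}$ for a parameter $\lambda > 0$ to be chosen later: for every $\lambda > 0$,
\[
\Pr(N \leq n-t) = \Pr\!\left(e^{-\lambda N} \geq e^{-\lambda(n-t)}\right) \leq e^{\lambda(n-t)} \cdot \bbE\!\left[e^{-\lambda N}\right].
\]
I would then substitute the Poisson moment generating function $\bbE[e^{-\lambda N}] = \exp(n(e^{-\lambda} - 1))$, which follows directly from $N \sim \Pois(n)$ by summing the exponential series, to obtain $\Pr(N \leq n-t) \leq \exp(\lambda(n-t) + n(e^{-\lambda} - 1))$.

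Next I would minimize the exponent $\phi(\lambda) = \lambda(n-t) + n(e^{-\lambda}-1)$ over $\lambda > 0$. Solving $\phi'(\lambda) = (n-t) - n e^{-\lambda} = 0$ gives the optimal tilt $\lambda^\star = -\ln(1 - t/n)$, which is strictly positive exactly because $0 < t < n$. Substituting $\lambda^\star$ back and writing $u = t/n \in (0,1)$, the exponent simplifies to $\phi(\lambda^\star) = -n\big(u + (1-u)\ln(1-u)\big)$. Comparing with the target exponent $-\tfrac{t^2}{2(n+t)} = -\tfrac{n u^2}{2(1+u)}$, the entire claim reduces to the scalar inequality $u + (1-u)\ln(1-u) \geq \tfrac{u^2}{2(1+u)}$ for $u \in (0,1)$.

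The cleanest route, which is also the only step with any content, is to prove the slightly stronger inequality $u + (1-u)\ln(1-u) \geq \tfrac{u^2}{2}$ and then weaken. Setting $h(u) = u + (1-u)\ln(1-u) - \tfrac{u^2}{2}$, one has $h(0) = 0$ and $h'(u) = -\ln(1-u) - u \geq 0$, where the final step is the elementary bound $-\ln(1-u) \geq u$; hence $h \geq 0$ on $[0,1)$. This already yields the tighter tail $\Pr(N \leq n-t) \leq \exp(-t^2/(2n))$, and since $\tfrac{t^2}{2n} \geq \tfrac{t^2}{2(n+t)}$ for $t > 0$, the stated bound follows immediately. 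Finally, specializing to $t = n/2$ gives $\tfrac{t^2}{2(n+t)} = \tfrac{n^2/4}{2\cdot(3n/2)} = \tfrac{n}{12}$, hence $\Pr(N \leq n/2) \leq \exp(-n/12)$. I do not anticipate any genuine obstacle here: the moment generating function and the one-dimensional optimization are mechanical, and the lone inequality collapses to $-\ln(1-u) \geq u$. Should one wish to derive the $(n+t)$ denominator directly rather than via the tighter $t^2/(2n)$ bound, one would instead verify nonnegativity of $u + (1-u)\ln(1-u) - \tfrac{u^2}{2(1+u)}$ by the same monotonicity argument, which is marginally longer but equally elementary.
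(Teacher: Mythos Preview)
Your argument is correct: the Chernoff--Cram\'er tilt with $\lambda^\star = -\ln(1-t/n)$ and the elementary inequality $-\ln(1-u) \geq u$ yield the tighter bound $\exp(-t^2/(2n))$, which dominates the stated one, and the specialization $t=n/2$ is a clean arithmetic check. The paper does not actually supply its own proof of this lemma; it simply cites it as a standard Poisson tail bound (Theorem~A.8 in \cite{canonne2022topics}), so your self-contained Chernoff derivation goes beyond what the paper provides.
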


% In particular, \cref{lem:hoeffding} tells us that, for a Binomial random variable $X \sim \Bin(n, p)$, $(X - \bbE(X))$ is sub-Gaussian with parameter $n/4$.
% These three results together with \cref{lem:poissonization}(c) form the basis of our proof that $J_{\geq \tau}$ is sub-Gaussian under the event $\cE^{J}_{\geq \tau}$ by helping us decompose it into simpler random variables.
% Meanwhile, the $\cO \left( \frac{|\bSigma_\bA|}{\eps \alpha_\bA} \right)$ term captures the probability that event $\cE^{J}_{\geq \tau}$ does not hold, which can be computed using a standard concentration bound of \cref{lem:poisson-concentration} for Poisson random variables.
% Meanwhile, the $\cO \left( \frac{|\bSigma_\bA|}{\eps \alpha_\bA} \right)$ term captures the probability that event $\cE^{J}_{\geq \tau}$ does not hold, which can be directly calculated using a standard concentration bound of \cref{lem:poisson-concentration} for Poisson random variables.

\begin{mydefinition}[Sub-Gaussian distribution; e.g.\ see Section 1.2 of \cite{rigollet2023high}]
\label{def:subgaussian}
A random variable $X$ is \emph{sub-Gaussian} with parameter $\sigma^2$ if $\bbE(X) = 0$ and $\bbE \left( e^{ \lambda X } \right) \leq \exp \left( \frac{\lambda^2 \sigma^2}{2} \right)$ for all $\lambda \in \mathbb{R}$.
If $X \sim \subG(\sigma^2)$, it is known that $\Pr(|X| \geq t) \leq 2 \exp(-t^2 / \sigma^2)$ for any $t \geq 0$.
% A random variable $X$ is said to be sub-Gaussian with parameter $\sigma^2$ if we have $\bbE(X) = 0$ and $\bbE \left( e^{ \lambda X } \right) \leq \exp \left( \frac{\lambda^2 \sigma^2}{2} \right)$ for all $\lambda \in \mathbb{R}$.
% If $X \sim \subG(\sigma^2)$, it is known that we have $\Pr(|X| \geq t) \leq 2 \exp(-t^2 / \sigma^2)$ for any $t \geq 0$.
\end{mydefinition}

\begin{mylemma}[Sub-Gaussian additivity; e.g.\ see Corollary 1.7 of \cite{rigollet2023high}]
\label{lem:subgaussian-sum}
For $i \in [k]$, let $X_i \sim \subG(\sigma_i^2)$ be an independent sub-Gaussian random variable with parameter $\sigma_i^2$.
Then, for any set of real coefficients $a_1, \ldots, a_k \in \bbR$, we have $\left( \sum_{i=1}^k a_i X_i \right) \sim \subG(\sum_{i=1}^k a_i^2 \sigma_i^2)$.
\end{mylemma}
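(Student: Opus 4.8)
The plan is to verify directly the two defining properties of a sub-Gaussian variable from \cref{def:subgaussian} for the linear combination $S := \sum_{i=1}^k a_i X_i$: that $\bbE(S) = 0$, and that its moment generating function is dominated by the Gaussian one with variance $\sum_{i=1}^k a_i^2 \sigma_i^2$. Both follow mechanically from independence together with the single-variable hypothesis, so the ``proof'' is essentially one application of linearity and one factorization of an MGF.

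First I would dispatch the mean condition: each $X_i \sim \subG(\sigma_i^2)$ satisfies $\bbE(X_i) = 0$ by \cref{def:subgaussian}, so $\bbE(S) = \sum_{i=1}^k a_i \bbE(X_i) = 0$. Then, for the MGF bound, fix an arbitrary $\lambda \in \bbR$ and use independence of the $X_i$ to write
\[
\bbE\left( e^{\lambda S} \right) = \bbE\left( \prod_{i=1}^k e^{\lambda a_i X_i} \right) = \prod_{i=1}^k \bbE\left( e^{(\lambda a_i) X_i} \right).
\]
For each $i$, I would instantiate the sub-Gaussianity of $X_i$ at the real scalar $\lambda a_i$ (in place of $\lambda$) to get $\bbE\left( e^{(\lambda a_i) X_i} \right) \le \exp\left( \tfrac{(\lambda a_i)^2 \sigma_i^2}{2} \right)$, and then multiply:
\[
\bbE\left( e^{\lambda S} \right) \le \prod_{i=1}^k \exp\left( \frac{\lambda^2 a_i^2 \sigma_i^2}{2} \right) = \exp\left( \frac{\lambda^2}{2} \sum_{i=1}^k a_i^2 \sigma_i^2 \right).
\]
Since $\lambda \in \bbR$ was arbitrary, this is exactly the condition for $S \sim \subG\left( \sum_{i=1}^k a_i^2 \sigma_i^2 \right)$.

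There is no genuine obstacle; the only two points that deserve a moment's attention are (i) the factorization of the MGF requires \emph{independence} of the $X_i$, not merely that they are uncorrelated, and (ii) the per-coordinate bound must be applied with the rescaled argument $\lambda a_i$, which is legitimate precisely because \cref{def:subgaussian} quantifies the MGF inequality over all real arguments. The tail bound appearing at the end of \cref{def:subgaussian} then follows for $S$ as an immediate corollary, though it is not needed for the statement as phrased.
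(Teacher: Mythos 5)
Your proof is correct and is the standard MGF-factorization argument; the paper does not prove this lemma itself but cites Corollary 1.7 of \cite{rigollet2023high}, whose proof proceeds exactly as you describe (zero mean by linearity, then factor the MGF using independence and apply the sub-Gaussian bound at the rescaled argument $\lambda a_i$). Nothing further is needed.
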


\begin{mylemma}[See \cref{sec:appendix-subgaussian-take-max}]
\label{lem:subgaussian-take-max}
Let $X$ and $Y$ be discrete random variables.
If $(X \mid Y = y) \sim \subG(\sigma_{y}^2)$ for every $y \in \bSigma_{Y}$, then $X \sim \subG(\max_{y \in \bSigma_{Y}} \sigma_{y}^2)$.
\end{mylemma}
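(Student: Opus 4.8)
The plan is to condition on $Y$ and push the law of total expectation through the moment generating function. First I would dispatch the mean-zero requirement of \cref{def:subgaussian}: since each conditional law $(X \mid Y = y)$ is sub-Gaussian, we have $\bbE(X \mid Y = y) = 0$ for every $y \in \bSigma_Y$, so the tower property gives $\bbE(X) = \sum_{y} \bbP(Y = y) \cdot \bbE(X \mid Y = y) = 0$.

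Next, for the MGF bound, fix $\lambda \in \bbR$ and write $\bbE(e^{\lambda X}) = \sum_{y \in \bSigma_Y} \bbP(Y = y) \cdot \bbE(e^{\lambda X} \mid Y = y)$, where interchanging the summation and expectation is justified by Tonelli's theorem because $e^{\lambda X} \geq 0$. By the sub-Gaussianity of $(X \mid Y = y)$, each inner expectation is at most $\exp(\lambda^2 \sigma_y^2 / 2) \leq \exp(\lambda^2 \sigma_{\max}^2 / 2)$, where $\sigma_{\max}^2 := \max_{y \in \bSigma_Y} \sigma_y^2$. Pulling the factor $\exp(\lambda^2 \sigma_{\max}^2 / 2)$ out of the sum and using $\sum_{y} \bbP(Y = y) = 1$ gives $\bbE(e^{\lambda X}) \leq \exp(\lambda^2 \sigma_{\max}^2 / 2)$, which is precisely the sub-Gaussian condition with parameter $\sigma_{\max}^2$.

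There is no real obstacle here; the statement is a one-line consequence of the tower property together with the monotonicity $\sigma_y^2 \leq \sigma_{\max}^2$. The only point deserving a remark is the case where $\bSigma_Y$ is countably infinite: then the maximum should be read as a supremum, the claim is only meaningful when that supremum is finite, and the same computation goes through verbatim with $\sigma_{\max}^2$ replaced by $\sup_{y \in \bSigma_Y} \sigma_y^2$.
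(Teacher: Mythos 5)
Your proof is correct and follows essentially the same route as the paper's: apply iterated expectation to the moment generating function, bound each conditional MGF by $\exp(\lambda^2 \sigma_y^2/2)$, and replace $\sigma_y^2$ by the maximum. The only difference is that you additionally verify the mean-zero requirement of \cref{def:subgaussian} via the tower property, a small point the paper's proof leaves implicit.
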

% We present a simple proof of \cref{lem:subgaussian-take-max} in \cref{sec:appendix-subgaussian-take-max}.

\begin{mylemma}[Hoeffding's lemma; \cite{hoeffding1994probability}]
\label{lem:hoeffding}
Let $X$ be any real-valued random variable in the range $[a,b]$.
Then, for any $\lambda \in \mathbb{R}$, we have $\bbE \left( e^{\lambda (X - \bbE(X)} \right) \leq \exp \left( \frac{\lambda^2 (b-a)^2}{8} \right)$.
\end{mylemma}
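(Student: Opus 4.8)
The plan is to reduce to the mean-zero case and then control the cumulant generating function via a Taylor expansion whose first two terms vanish and whose second-order term is bounded using only the boundedness of $X$.

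First I would normalize. Writing $\mu = \bbE(X)$ and passing to the centered variable $Y = X - \mu$, I note that $\bbE(Y) = 0$, that $Y$ is supported in $[a - \mu, b - \mu]$, an interval of the same width $b - a$, and that $\bbE(e^{\lambda(X - \mu)}) = \bbE(e^{\lambda Y})$. Hence it suffices to prove the claim for a mean-zero random variable supported in an interval of width $b - a$, so I may assume $\bbE(X) = 0$ and $a \le 0 \le b$. Next I would invoke convexity of $t \mapsto e^{\lambda t}$: for every $x \in [a,b]$,
\[
e^{\lambda x} \le \frac{b - x}{b - a}\, e^{\lambda a} + \frac{x - a}{b - a}\, e^{\lambda b}.
\]
Taking expectations and using $\bbE(X) = 0$ to eliminate the terms linear in $x$ leaves
\[
\bbE\bigl(e^{\lambda X}\bigr) \le \frac{b}{b - a}\, e^{\lambda a} - \frac{a}{b - a}\, e^{\lambda b}.
\]

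Then I would set $p = \frac{-a}{b - a} \in [0,1]$ and $h = \lambda(b - a)$, so the right-hand side equals $e^{L(h)}$ with $L(h) = -p h + \log\!\bigl(1 - p + p e^{h}\bigr)$; it remains to show $L(h) \le h^2/8$ for all $h \in \bbR$. I would compute $L(0) = 0$ and $L'(0) = 0$, and then show
\[
L''(h) = \frac{p(1-p)\, e^h}{(1 - p + p e^h)^2} = q(1-q) \le \tfrac14,
\]
where $q = \frac{p e^h}{1 - p + p e^h} \in [0,1]$ and $q(1-q) \le \tfrac14$ for all such $q$. A second-order Taylor expansion with Lagrange remainder then gives $L(h) = \tfrac{h^2}{2} L''(\xi) \le \tfrac{h^2}{8}$ for some $\xi$ between $0$ and $h$, and substituting $h = \lambda(b-a)$ yields $\bbE\bigl(e^{\lambda(X - \bbE(X))}\bigr) \le \exp\!\bigl(\tfrac{\lambda^2(b-a)^2}{8}\bigr)$.

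The main obstacle, and really the only nontrivial step, is the uniform bound $L''(h) \le 1/4$, which encodes the fact that a random variable confined to an interval of width $b-a$ has variance at most $(b-a)^2/4$ (Popoviciu's inequality). An alternative route that makes this transparent is to work directly with $\psi(\lambda) = \log \bbE(e^{\lambda X})$ and recognize $\psi''(\lambda)$ as the variance of $X$ under the exponentially tilted law $dQ \propto e^{\lambda X}\, dP$; since that law is still supported in $[a,b]$, its variance is at most $(b-a)^2/4$, and the same Taylor argument (now with $\psi(0) = \psi'(0) = 0$ since $\bbE(X)=0$) closes the proof.
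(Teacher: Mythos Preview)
Your proof is correct and is the standard textbook argument for Hoeffding's lemma. Note, however, that the paper does not prove this statement at all: it is stated as a cited result (\cite{hoeffding1994probability}) and used as a black box in the proof of \cref{lem:properties-from-poissonization-sampling}. So there is no ``paper's own proof'' to compare against; your argument simply supplies what the paper takes for granted.
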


\subsection{Known sample complexity results for discrete distributions}

The third term in \cref{thm:estimation-error} comes from reducing one subtask in our analysis to the problem of estimating $\bbP(\bA)$ well in TV distance, for some subset of variables $\bA \subseteq \bV$.
In the distribution testing literature, this task is well-known to require $\wt{\Theta} \left( \frac{|\bSigma_{\bA}|}{\eps^2} \right)$ i.i.d.\ samples.

\begin{mylemma}[Estimating well in TV; e.g.\ see \cite{canonne2020short}]
\label{lem:TV-estimation-guarantees}
Given tolerance parameters $\eps, \delta > 0$ and sample access to a distribution $\bbP(\bA)$, the empirical distribution $\wh{\bbP}(\bA)$ constructed from $\cO \left( \frac{|\bSigma_\bA| + \log \frac{1}{\delta}}{\eps^2} \right)$ i.i.d.\ samples has the property that $\Pr \left( \sum_{\ba \in \bSigma_{\bA}} | \bbP(\ba) - \wh{\bbP}(\ba) | \leq \eps \right) \geq 1 - \delta$.
\end{mylemma}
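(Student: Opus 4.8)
The plan is to bound the expected $\ell_1$ error $\bbE\left[\sum_{\ba}|\bbP(\ba)-\wh{\bbP}(\ba)|\right]$ and then apply a bounded-differences concentration inequality to control its fluctuations, choosing the number of samples $n$ so that both the expectation and the deviation are at most $\eps/2$. The two contributions to the sample complexity, $|\bSigma_\bA|/\eps^2$ and $\log(1/\delta)/\eps^2$, arise separately from these two steps.

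First I would bound the expectation. Write $k = |\bSigma_\bA|$ and recall that the empirical mass satisfies $\wh{\bbP}(\ba) = N_\ba/n$, where the count $N_\ba \sim \Bin(n, \bbP(\ba))$; hence $\wh{\bbP}(\ba)$ is unbiased with variance $\bbP(\ba)(1-\bbP(\ba))/n \leq \bbP(\ba)/n$. By Jensen's inequality, $\bbE|\wh{\bbP}(\ba)-\bbP(\ba)| \leq \sqrt{\textnormal{Var}(\wh{\bbP}(\ba))} \leq \sqrt{\bbP(\ba)/n}$. Summing over $\ba$ and applying Cauchy--Schwarz in the form $\sum_\ba \sqrt{\bbP(\ba)} \leq \sqrt{k}\,\sqrt{\sum_\ba \bbP(\ba)} = \sqrt{k}$ yields
\[
\bbE\left[\sum\nolimits_{\ba}|\bbP(\ba)-\wh{\bbP}(\ba)|\right] \leq \frac{1}{\sqrt{n}}\sum\nolimits_\ba \sqrt{\bbP(\ba)} \leq \sqrt{\frac{k}{n}}.
\]
In particular, this expectation is at most $\eps/2$ once $n \geq 4k/\eps^2$.

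Next I would apply a bounded-differences (McDiarmid) inequality to the function $f(s_1,\ldots,s_n) = \sum_\ba |\bbP(\ba)-\wh{\bbP}(\ba)|$ of the $n$ i.i.d.\ samples. Replacing a single sample moves $1/n$ units of empirical mass from one cell of $\bSigma_\bA$ to another, so $f$ changes by at most $2/n$; thus $f$ has bounded differences with constants $c_i = 2/n$ and $\sum_i c_i^2 = 4/n$. The resulting tail bound $\Pr\left(f \geq \bbE[f] + t\right) \leq \exp(-t^2 n / 2)$ can itself be obtained from the already-available Hoeffding's lemma (\cref{lem:hoeffding}) via the standard Doob-martingale argument, so no new machinery is required. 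Taking $t = \eps/2$, the deviation probability is at most $\delta$ once $n \geq 8\log(1/\delta)/\eps^2$. Combining the two regimes, $n = \cO\left((k + \log\tfrac{1}{\delta})/\eps^2\right)$ suffices to guarantee $f \leq \bbE[f] + \eps/2 \leq \eps$ with probability at least $1-\delta$, which is exactly the claim.

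The computation is routine; the only points requiring care are the two constants that pin down the sample complexity. The Cauchy--Schwarz step is what introduces the $\sqrt{k}$ factor responsible for the \emph{linear} dependence of $n$ on $|\bSigma_\bA|$, and one must correctly identify the bounded-differences constant as $2/n$ rather than $1/n$, since relocating a single sample perturbs two coordinates of $\wh{\bbP}$. These are the substance of the argument rather than a genuine obstacle. The matching lower bound, that $\wt{\Omega}(|\bSigma_\bA|/\eps^2)$ samples are also necessary, is classical and can be cited (via \cite{canonne2020short}) rather than reproved here.
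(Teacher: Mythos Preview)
Your argument is correct and is essentially the standard proof of this folklore result: bound the mean $\ell_1$ error via Cauchy--Schwarz to get the $|\bSigma_\bA|/\eps^2$ term, then apply McDiarmid with bounded-difference constant $2/n$ to get the $\log(1/\delta)/\eps^2$ term. The paper itself does not supply a proof; it simply states the lemma as a known sample-complexity fact and cites \cite{canonne2020short}, so there is nothing to compare against beyond noting that your derivation is exactly the one the cited reference would give.
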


Meanwhile, for our proofs of Theorems \ref{thm:AMBA} and \ref{thm:BAMBA}, several methods have been developed which satisfy the requirements of the $\eps$-approximate conditional independence tester for \cref{def:approx-cond-ind}.
In this work, we call our $\eps$-approximate conditional independence tester \ApproxCondInd.
Assuming that $\eps^{-1}$ is sufficiently large\footnote{For instance, $\frac{1}{\eps} > |\bSigma_{\bC}|^{\frac{1}{4}} \cdot \left(\max\{ |\bSigma_{\bA}|, |\bSigma_{\bB}|, |\bSigma_{\bC}| \}\right)^{\frac{1}{4}}$ would suffice.} compared to $|\bSigma_{\bA}|$, $|\bSigma_{\bB}|$, and $|\bSigma_{\bC}|$, \cite{canonne2018testing} proposes a test based on total variation distance that uses $\wt{\cO} \left( \frac{1}{\eps^2} \cdot \sqrt{|\bSigma_{\bA}| \cdot |\bSigma_{\bB}| \cdot |\bSigma_{\bC}|} \right)$ samples from $\bbP$; see their Theorem 1.3 and Lemma 2.2.
There is also a simpler test based on the empirical mutual information, proposed by \cite{bhattacharyya2021near}, that uses $\wt{\cO} \left( \frac{1}{\eps^2} \cdot |\bSigma_{\bA}| \cdot |\bSigma_{\bB}| \cdot |\bSigma_{\bC}| \right)$ samples from $\bbP$, though we use the former to obtain optimal dependence on the alphabet sizes.

\begin{mylemma}[Using \cite{canonne2018testing} for \ApproxCondInd]
\label{lem:approxcondind-guarantees}
Given $\eps, \delta > 0$ and sample access to distribution $\bbP(\bV)$, \ApproxCondInd\ uses $\wt{\cO} \left( \frac{1}{\eps^2} \cdot \sqrt{|\bSigma_{\bA}| \cdot |\bSigma_{\bB}| \cdot |\bSigma_{\bC}|} \cdot \log \frac{1}{\delta} \right)$ samples and correctly determines whether $\Delta_{\bA \ind \bB \mid \bC} = 0$ (outputs YES) or $\Delta_{\bA \ind \bB \mid \bC} > \eps$ (outputs NO) with probability at least $1 - \delta$, for any disjoint sets $\bA, \bB, \bC \subseteq \bV$.
\end{mylemma}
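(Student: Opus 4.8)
The plan is to reduce the task solved by \ApproxCondInd\ to the conditional independence tester of \cite{canonne2018testing} and then boost its success probability by repetition. First I would record the elementary fact that $\Delta_{\bA \ind \bB \mid \bC} = 0$ holds if and only if $\bbP(\ba, \bb \mid \bc) = \bbP(\ba \mid \bc)\,\bbP(\bb \mid \bc)$ for all $\ba, \bb, \bc$, i.e.\ exactly when $\bA$ is (exactly) conditionally independent of $\bB$ given $\bC$ under $\bbP$. Thus the ``$\Delta = 0$'' branch of the promise coincides precisely with the null hypothesis (conditional independence) tested in \cite{canonne2018testing}, with their three coordinates instantiated as $\ell_1 = |\bSigma_{\bA}|$, $\ell_2 = |\bSigma_{\bB}|$, and $n = |\bSigma_{\bC}|$.

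The key step is to relate our discrepancy $\Delta_{\bA \ind \bB \mid \bC}$ to the distance that their tester certifies, namely the total variation distance $d_{\mathrm{TV}}(\bbP, \mathcal{C})$ from $\bbP$ (viewed as a joint law over $\bA \times \bB \times \bC$) to the set $\mathcal{C}$ of conditionally independent distributions. On one side, the product-of-conditionals law $q(\ba,\bb,\bc) = \bbP(\bc)\,\bbP(\ba\mid\bc)\,\bbP(\bb\mid\bc)$ lies in $\mathcal{C}$, so directly from \cref{def:approx-cond-ind} one computes $d_{\mathrm{TV}}(\bbP,\mathcal{C}) \le d_{\mathrm{TV}}(\bbP,q) = \frac{1}{2} \Delta_{\bA\ind\bB\mid\bC}$. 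The reverse, constant-factor bound $d_{\mathrm{TV}}(\bbP,\mathcal{C}) \ge c\,\Delta_{\bA\ind\bB\mid\bC}$ for an absolute constant $c>0$ is the content of Lemma 2.2 of \cite{canonne2018testing}, which shows the product-of-conditionals law is within a constant factor of the optimal conditionally independent approximation. Combining the two, $\Delta_{\bA\ind\bB\mid\bC} > \eps$ forces $d_{\mathrm{TV}}(\bbP,\mathcal{C}) > c\eps$, so the ``$\Delta > \eps$'' branch lands strictly inside the ``$\Omega(\eps)$-far from $\mathcal{C}$'' alternative of their tester.

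With this dictionary in place, I would invoke Theorem 1.3 of \cite{canonne2018testing} with tolerance parameter $\eps' = c\eps = \Theta(\eps)$: under the stated assumption that $1/\eps$ is sufficiently large relative to $|\bSigma_{\bA}|, |\bSigma_{\bB}|, |\bSigma_{\bC}|$ (so that the leading $\eps^{-2}$ term dominates the lower-order terms in their bound), their tester distinguishes the null from the $\eps'$-far alternative using $\wt{\cO}\left(\sqrt{|\bSigma_{\bA}|\cdot|\bSigma_{\bB}|\cdot|\bSigma_{\bC}|}/\eps^2\right)$ samples with some constant success probability, say $2/3$. The final step is standard probability amplification: run $\cO(\log\frac{1}{\delta})$ independent copies of this constant-probability tester and output the majority vote; a Chernoff bound drives the failure probability below $\delta$, and multiplying the per-run sample count by this $\cO(\log\frac{1}{\delta})$ factor yields the claimed $\wt{\cO}\left(\frac{1}{\eps^2}\sqrt{|\bSigma_{\bA}|\cdot|\bSigma_{\bB}|\cdot|\bSigma_{\bC}|}\cdot\log\frac{1}{\delta}\right)$ bound.

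I expect the main obstacle to be the distance-comparison step: one must confirm that the $\Delta$ of \cref{def:approx-cond-ind} is equivalent, up to absolute constants, to the TV-distance-to-$\mathcal{C}$ notion that \cite{canonne2018testing} actually certifies, since the cheap direction (product-of-conditionals upper-bounds the distance to $\mathcal{C}$) does not by itself give soundness; soundness requires the reverse inequality supplied by their Lemma 2.2. A secondary point to verify is that the promise regime here ($\Delta=0$ versus $\Delta>\eps$), after the $\Theta(\eps)$ rescaling, still satisfies their ``sufficiently large $1/\eps$'' assumption, so that the clean single-term sample complexity is indeed the operative bound.
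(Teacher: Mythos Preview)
Your proposal is correct and matches the paper's approach: the paper does not give its own proof of this lemma but simply states it as a consequence of Theorem~1.3 and Lemma~2.2 of \cite{canonne2018testing}, which are exactly the two ingredients you invoke (together with the standard majority-vote amplification to get the $\log\frac{1}{\delta}$ factor). Your write-up in fact fleshes out the reduction more carefully than the paper does, correctly identifying that Lemma~2.2 is needed for the soundness direction relating $\Delta_{\bA\ind\bB\mid\bC}$ to the TV-distance-to-$\mathcal{C}$ that their tester certifies.
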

Note that when $0 < \Delta_{\bA \ind \bB \mid \bC} \leq \eps$, \ApproxCondInd\ is allowed to output arbitrarily.
In particular, when $\ApproxCondInd$ outputs YES, then we have $\Pr(\Delta_{\bA \ind \bB \mid \bC} \leq \eps) \geq 1 - \delta$.
% In particular, when $\ApproxCondInd$ outputs YES on inputs $(\bA, \bB, \bC, \eps, \delta)$, then we have $\Pr(\Delta_{\bA \ind \bB \mid \bC} \leq \eps) \geq 1 - \delta$.

\section{Sample complexity for empirical estimation}
\label{sec:sample-complexity}

In this section, we prove \cref{thm:estimation-error}, our upper bound on the sample complexity of estimating $T_{\bA, \bx, \by}$ given any $\bA \subseteq \bV$.
For analysis purposes, we will use the Poissonization sampling process (\cref{sec:poissonization}) so that we invoke \cref{lem:poissonization} to obtain PAC style bounds.

% \estimationerror*
\begin{proofsketch}[\cref{thm:estimation-error}]
By definition, we have
\begin{align*}
T_{\bA,\bx,\by} - \hatT_{\bA,\bx,\by}
&= \sum_\ba \left( \bbP(\ba) \cdot \bbP(\by \mid \bx, \ba) - \frac{N_\ba}{N_\Pois} \cdot \frac{N_{\by,\bx,\ba}}{N_{\bx,\ba}} \right)\\
&= \sum_\ba \bbP(\ba) \cdot \left( \bbP(\by \mid \bx, \ba) - \frac{N_{\by,\bx,\ba}}{N_{\bx,\ba}} \right) + \sum_\ba \left( \bbP(\ba) - \frac{N_\ba}{N_\Pois} \right) \cdot \frac{N_{\by,\bx,\ba}}{N_{\bx,\ba}}
\end{align*}
where $N_\ba$, $N_\Pois$, $N_{\by,\bx,\ba}$, and $N_{\bx,\ba}$ are random Poisson variables from the Poissonization process with $N_\Pois \sim \Pois(n)$ for some parameter $n$; see \cref{sec:poissonization}.
Since $N_\Pois = \sum_{\ba} N_\ba = \sum_{\ba, \bx} N_{\bx,\ba} = \sum_{\ba, \bx, \by} N_{\by,\bx,\ba}$, we see that $0 \leq \frac{N_\ba}{N_\Pois} \leq 1$ and $0 \leq \frac{N_{\by,\bx,\ba}}{N_{\bx,\ba}} \leq 1$ for each of these fractional terms.

We can define a threshold $\tau > 0$ and partition the values of $\bA$ accordingly, and then define three summations $J_{< \tau}$, $J_{\geq \tau}$, and $K$ so that $T_{\bA,\bx,\by} - \hatT_{\bA,\bx,\by} = J_{< \tau} + J_{\geq \tau} + K$:
\begin{align*}
\bSigma_{\bA \geq \tau} &= \left\{ \ba \in \bSigma_{\bA} : \bbP(\bx, \ba) \geq \tau \right\}\\
J_{< \tau} &= \sum\nolimits_{\ba \not\in \bSigma_{\bA \geq \tau}} \bbP(\ba) \cdot \left( \bbP(\by \mid \bx, \ba) - \frac{N_{\by,\bx,\ba}}{N_{\bx,\ba}} \right)\\
J_{\geq \tau} &= \sum\nolimits_{\ba \in \bSigma_{\bA \geq \tau}} \bbP(\ba) \cdot \left( \bbP(\by \mid \bx, \ba) - \frac{N_{\by,\bx,\ba}}{N_{\bx,\ba}} \right)\\
K &= \sum\nolimits_\ba \left( \bbP(\ba) - \frac{N_\ba}{N_\Pois} \right) \cdot \frac{N_{\by,\bx,\ba}}{N_{\bx,\ba}}
\end{align*}
Since $\alpha_{\bA} = \min_{\ba \in \bSigma_{\bA}} \bbP(\bx \mid \ba)$, we see that $\bbP(\ba) \leq \frac{\tau}{\alpha_{\bA}}$ for $\ba \not\in \bSigma_{\bA \geq \tau}$.
One can show that $|J_{< \tau}| \leq \frac{\tau \cdot |\bSigma_{\bA}|}{\alpha_{\bA}}$ using triangle inequality and definition of $\bSigma_{\bA \geq \tau}$.
As for $|J_{\geq \tau}|$ and $|K|$, we condition on the concentration event $\cE^{J}_{\geq \tau} = \bigcap_{\ba \in \bSigma_{\bA \geq \tau}} \left\{ N_{\bx, \ba} > \frac{n \cdot \bbP(\bx, \ba)}{2} \right\}$ which holds with probability at least $1 - |\bSigma_{\bA}| \cdot \exp \left( -\frac{n \tau}{12} \right)$.
Under the event $\cE^{J}_{\geq \tau}$, one can show that $\Pr \left( \left| J_{\geq \tau} \right| > t \mid \cE^{J}_{\geq \tau} \right) \leq 2 \exp \left( - 2 n \alpha_{\bA} t^2 \right)$ and we can reduce the analysis of $|K|$ to the problem of producing an $\eps$-close estimate of $\bbP(\bA)$ by observing that $0 \leq \frac{N_{\by,\bx,\ba}}{N_{\bx,\ba}} \leq 1$ and $\frac{N_\ba}{N_\Pois}$ is the empirical estimate of $\bbP(\ba)$ for each $\ba \in \bSigma_{\bA}$.
That is, $|K| \leq \sum_\ba \left| \bbP(\ba) - \wh{\bbP}(\ba) \right|$.
The claim follows by putting together the above discussed bounds appropriately.
See \cref{sec:appendix-sample-complexity} for details.
\end{proofsketch}

\section{Finding small adjustment sets via approximate Markov blankets}
\label{sec:markov-blanket}

As discussed in \cref{sec:intro-main-results}, the bound in \cref{thm:estimation-error} motivates the use of small adjustment sets whenever possible.
In this section, we focus on searching for such sets.
% Since $T_{\bS,\bx,\by} = T_{\bA,\bx,\by}$ if $\bS$ is a Markov blanket of $\bX$ with respect to $\bA$, a straightforward approach to finding a smaller adjustment set is to search for an approximate Markov blanket of $\bX$.
In \cref{sec:misspecification-error-amb}, we describe our proofs of Lemmas \ref{lem:misspecification-error} and \ref{lem:misspecification-error-lower-bound} on the misspecification error of performing covariate adjustment with an approximate Markov blanket.
In \cref{sec:AMBA}, we describe our proof of \cref{thm:AMBA} on the sample complexity of approximate Markov blanket discovery; similarly, \cref{sec:BAMBA} describes our proof of \cref{thm:BAMBA}, which searches for even smaller adjustment sets.
\cref{sec:putting-together} concludes by putting these results together into a combination algorithm and describing the proof of our final result, \cref{thm:pac-causal-effect-estimation-special-case}.

\subsection{Misspecification error for approximate Markov blankets}\label{sec:misspecification-error-amb}

We begin with \cref{lem:misspecification-error}, which extends the equality $T_{\bS,\bx,\by} = T_{\bA,\bx,\by}$ for exact Markov blankets to a bound on the misspecification error $|T_{\bS,\bx,\by} - T_{\bA,\bx,\by}|$ for approximate Markov blankets.
To accompany this result, we also give a matching lower bound on the misspecification error in \cref{lem:misspecification-error-lower-bound}.

\begin{proofsketch}[\cref{lem:misspecification-error}]
By \cref{eq:T-alternative} and $\bS \subseteq \bA$, we see that
\[
\left| T_{\bS,\bx,\by} - T_{\bA,\bx,\by} \right| = \left| \sum_{\ba} \bbP(\by \mid \ba, \bx) \cdot \bbP(\ba \setminus \bs \mid \bs, \bx) \cdot \bbP(\bs) - \sum_{\ba} \bbP(\by \mid \ba, \bx) \cdot \bbP(\ba) \right|
\]
Then by triangle inequality, non-negativity of probabilities, and since $\bbP(\by \mid \ba, \bx) \leq 1$, \cref{eq:alpha-def}, and \cref{eq:approx-cond-ind-alternative}, one can show that this is at most $\frac{\eps}{\alpha_{\bS}}$.
See \cref{sec:appendix-misspecification-errors} for a step-by-step derivation.
\end{proofsketch}

% \misspecificationerrorlowerbound*
\begin{proofsketch}[\cref{lem:misspecification-error-lower-bound}]
One can construct a distribution $\bbP$ defined over binary variables $\{X,Y,A,B\}$ with (conditional) probabilities below, which implies the four properties of the claim.
\begin{center}
\begin{tabular}{cc|cccc}
\toprule
$a$ & $b$ & $\bbP(b \mid a)$ & $\bbP(X = 0 \mid a,b)$ & $\bbP(X = 0 \mid a)$ & $\sum_x | \bbP(x \mid a,b) - \bbP(x \mid a)|$\\
\midrule
$0$ & $0$ & $\sqrt{\eps}/2$ & $1 - \alpha + \sqrt{\eps}/2$ & $1 - \alpha + \eps/4$ & $\sqrt{\eps} - \eps/2$\\
$0$ & $1$ & $1-\sqrt{\eps}/2$ & $1 - \alpha$ & $1 - \alpha + \eps/4$ & $\eps/2$\\
$1$ & $0$ & $1-\sqrt{\eps}/2$ & $\alpha$ & $\alpha - \eps/4$ & $\eps/2$\\
$1$ & $1$ & $\sqrt{\eps}/2$ & $\alpha - \sqrt{\eps}/2$ & $\alpha - \eps/4$ & $\sqrt{\eps} - \eps/2$\\
\bottomrule
\end{tabular}
\end{center}
where $\bS = \{A\} \subset \{A,B\} = \bZ$.
See \cref{sec:appendix-misspecification-errors} for detailed calculations and derivations.
\end{proofsketch}

\subsection{Approximate Markov blankets: discovery and adjustment}
\label{sec:AMBA}

We now prove \cref{thm:AMBA}, our upper bound on the sample complexity of finding an $\eps$-Markov blanket of $\bX$ with respect to an arbitrary set $\bA \subseteq \VminusXY$ using \cref{alg:AMBA}.

\begin{algorithm}[htb]
\caption{\AMBA\ (\textsc{AMBA})}
\label{alg:AMBA}
\KwData{$\eps, \delta > 0$, dataset $\bD$ of $n$ i.i.d. samples from $\bbP(\bV)$, and subset $\bA \subseteq \bV$}
\KwResult{$\bS \subseteq \bA$}
\For{$k = 0, 1, 2, \ldots, |\bA|$} {
    Let $w_k = \left( |\bA| \cdot \binom{|\bA|}{k} \right)^{-1}$\;
    
    Let $\bC_k = \big\{ \bS \subseteq \bA : |\bS| = k$, where \;
    
    \hspace{95pt}$\ApproxCondInd(\bX \ind \bA \setminus \bS \mid \bS, \eps, \delta w_k, \bD)$ outputs YES$\big\}$\;
    
    \If{$|\bC_k| > 0$} {
        \Return any $\bS \in \bC_k$\;
    }
}
\Return $\bA$\;
\end{algorithm}

% \begin{algorithm}[htb]
% \caption{\AMBA\ (\textsc{AMBA})}
% \label{alg:AMBA}
% \begin{algorithmic}[1]
%     \Statex \textbf{Input:} $\eps, \delta > 0$, dataset $\bD$ of $n$ i.i.d. samples from $\bbP(\bV)$, and subset $\bA \subseteq \bV$
%     \Statex \textbf{Output:} $\bS \subseteq \bA$
%     \For{$k = 0, 1, 2, \ldots, |\bA|$}
%         \State Let $w_k = \left( |\bA| \cdot \binom{|\bA|}{k} \right)^{-1}$
%         \State Let $\bC_k = \left\{ \bS \subseteq \bA : \text{$|\bS| = k$, where $\ApproxCondInd(\bX \ind \bA \setminus \bS \mid \bS, \eps, \delta w_k, \bD)$ outputs YES} \right\}$
%         \If{$|\bC_k| > 0$}
%         \State \Return any $\bS \in \bC_k$
%         \EndIf
%     \EndFor
%     \State \Return $\bA$
% \end{algorithmic}
% \end{algorithm}

% \approximatemarkovblanketdiscovery*
\begin{myproof}[\cref{thm:AMBA}]
Suppose \AMBAshort\ (\cref{alg:AMBA}) terminates at iteration $|\bS| \in \{0, 1, \ldots, |\bA|\}$.

\paragraph{Correctness.}
Suppose all calls to \ApproxCondInd\ succeed, then \cref{lem:approxcondind-guarantees} tells us that any produced $\bS \subseteq \bA$ satisfies the property that $\Delta_{\bX \ind \bA \setminus \bS \mid \bS} \leq \eps$.
\cref{lem:misspecification-error} then further tells us that $\Delta_{\bX \ind \bA \setminus \bS \mid \bS} \leq \eps$ implies $| T_{\bS, \bx, \by} - T_{\bA, \bx, \by} | \leq \frac{\eps}{\alpha_{\bS}}$.

\paragraph{Failure rate.}
There are at most $\binom{|\bA|}{k}$ possible candidate sets in $\bC_k$ for each $k \in \{0, 1, \ldots, |\bA|\}$.
Since we invoked each call to $\ApproxCondInd$ with $\delta w_k$ in iteration $k$, union bound tells us that the probability of \emph{any} call failing across all calls is at most
\[
\sum_{k=0}^{|\bS|} \delta w_k \cdot \binom{|\bA|}{k}
= \sum_{k=0}^{|\bS|} \delta \cdot \frac{1}{|\bA| \cdot \binom{|\bA|}{k}} \cdot \binom{|\bA|}{k}
= \sum_{k=0}^{|\bS|} \frac{\delta}{|\bA|}
\leq \frac{\delta \cdot |\bS|}{|\bA|}
\leq \delta
\]

\paragraph{Sample complexity.}
Since we use a union bound to bound our overall failure probability, we can reuse samples in all our calls to \ApproxCondInd.
Thus, the total sample complexity is dominated by the final call (where $k = |\bS|$), which uses $\wt{\cO} \left( \frac{1}{\eps^2} \cdot \sqrt{|\bSigma_{\bX}| \cdot |\bSigma_{\bA \setminus \bS}| \cdot |\bSigma_{\bS}|} \cdot \log \frac{1}{\delta w_k} \right)$ samples according to \cref{lem:approxcondind-guarantees}. 
Plugging in $w_k = \left( |\bA| \cdot \binom{|\bA|}{k} \right)^{-1}$, we obtain total sample complexity $\wt{\cO} \left( \frac{|\bS|}{\eps^2} \cdot \sqrt{|\bSigma_{\bX}| \cdot |\bSigma_{\bA}|} \cdot \log \frac{1}{\delta} \right)$, where we omit $\log |\bA|$ within $\wt{\cO}(\cdot)$ because $|\bA| \leq |\bSigma_{\bA}|$.
\end{myproof}

\subsection{Beyond approximate Markov blankets}
\label{sec:BAMBA}

Motivated by \cref{fig:intro-motivation}, which shows that the Markov blanket of $\bX$ with respect to $\bZ$ may still be large compared to the smallest adjustment set, we study in this section an approach for finding smaller adjustment sets than the Markov blanket.
We prove \cref{lem:minimal-soundness} in \cref{sec:appendix-BAMBA}, which establishes conditions on sets $\bS' \subseteq \VminusXY$ and $\bS \subseteq \VminusXY$ such that $T_{\bS',\bx,\by} = T_{\bS,\bx,\by}$; this result suggest an approach for going beyond adjustment by Markov blankets.
This allows us to show \cref{thm:BAMBA}, our upper bound on the sample complexity of finding a set $\bS'$ that approximately satisfies the conditions of \cref{lem:minimal-soundness} with respect to an $\eps$-Markov blanket $\bS$.

\begin{algorithm}[htb]
\caption{\BAMBA\ (\BAMBAshort)}
\label{alg:BAMBA}
\KwData{$\eps, \delta > 0$, $n$ i.i.d. samples $\bD$ from $\bbP(\bV)$, subset $\bA \subseteq \bV$, and $\eps$-Markov blanket $\bS \subseteq \bA$}
\KwResult{$\bS' \subseteq \bA$ such that $|\bSigma_{\bS'}| \leq |\bSigma_{\bS}|$}
\For{$k = 0, 1, 2, \ldots, |\bA|$} {
    Let $w_k = \left( |\bA| \cdot \binom{|\bA|}{k} \right)^{-1}$\;
    
    Let $\bC_k = \big\{ \bS' \subseteq \bA : |\bS'| = k$ and $|\bSigma_{\bS'}| \leq |\bSigma_{\bS}|$, where\;
    
    \hspace{97pt}$\ApproxCondInd(\bY \ind \bS \setminus \bS' \mid \bX \cup \bS', \eps, \frac{\delta w_k}{2}, \bD)$ outputs YES,\;
    
    \hspace{97pt}$\ApproxCondInd(\bX \ind \bS' \setminus \bS \mid \bS, \eps, \frac{\delta w_k}{2}, \bD)$ outputs YES$\big\}$\;
    
    \If{$|\bC_k| > 0$} {
        \Return any $\bS' \in \bC_k$\;
    }
}
\Return $\bS$\;
\end{algorithm}

% \begin{algorithm}[htb]
% \caption{\BAMBA\ (\BAMBAshort)}
% \label{alg:BAMBA}
% \begin{algorithmic}[1]
%     \Statex \textbf{Input:} $\eps, \delta > 0$, dataset $\bD$ of $n$ i.i.d. samples from $\bbP(\bV)$, subset $\bA \subseteq \bV$, and $\eps$-Markov blanket $\bS \subseteq \bA$
%     \Statex \textbf{Output:} $\bS' \subseteq \bA$ such that $|\bSigma_{\bS'}| \leq |\bSigma_{\bS}|$
%     \For{$k = 0, 1, 2, \ldots, |\bA|$}
%         \State Let $w_k = \left( |\bA| \cdot {|\bA| \choose k} \right)^{-1}$
%         \State Let $\bC_k = \big\{ \bS' \subseteq \bA : \text{$|\bS'| = k$, where $\ApproxCondInd(\bY \ind \bS \setminus \bS' \mid \bX \cup \bS', \eps, \frac{\delta w_k}{2}, \bD)$ outputs YES,}$
%         \Statex\hspace{\algorithmicindent}\hspace{154pt} $\ApproxCondInd(\bX \ind \bS' \setminus \bS \mid \bS, \eps, \frac{\delta w_k}{2}, \bD)$ outputs YES,
%         \Statex\hspace{\algorithmicindent}\hspace{139pt}and $|\bSigma_{\bS'}| \leq |\bSigma_{\bS}|$\big\}
%         \If{$|\bC_k| > 0$}
%         \State \textbf{return} any $\bS' \in \bC_k$
%         \EndIf
%     \EndFor
%     \State \Return $\bS$
% \end{algorithmic}
% \end{algorithm}

% \minimalgivenapproximatemarkovblanketdiscovery*
\begin{proofsketch}[\cref{thm:BAMBA}]
The proof follows the same structure as that of \cref{thm:AMBA}.
The key difference is that in the correctness analysis, we apply triangle inequality $\left| T_{\bS, \bx, \by} - T_{\bS', \bx, \by} \right| \leq \left| T_{\bS, \bx, \by} - Z_{\bx, \by} \right| + \left| Z_{\bx, \by} - T_{\bS', \bx, \by} \right|$ for some intermediate term $Z_{\bx, \by} = \sum_{\bs \cup \bs'} \frac{1}{\bbP(\bx \mid \bs)} \cdot \bbP(\bx, \bs \cup \bs') \cdot \bbP(\by \mid \bx, \bs')$ which allows us to relate the two approximate conditional independences to $\alpha_{\bS}$.
The claim follows after upper bounding each of the terms by $\frac{\eps}{\alpha_{\bS}}$.
See \cref{sec:appendix-BAMBA} for details.
\end{proofsketch}

\subsection{A combination algorithm}
\label{sec:putting-together}

In \cref{sec:appendix-putting-together}, we re-express the results of \cref{thm:estimation-error}, \cref{thm:AMBA} and \cref{thm:BAMBA} in terms of an upper bound on error for a fixed number of samples $n$.
After which, there are a couple of ways one could attempt to estimate $\bbP_{\bx}(\by)$ when given a valid adjustment set $\bZ \subseteq \VminusXY$:\\
\phantom{\hspace{10pt}}1. Directly estimate using $\bZ$.\\
\phantom{\hspace{10pt}}2. Use \AMBAshort\ on $\bZ$ to produce a subset $\bS \subseteq \bZ$ and estimate using $\bS$.\\
\phantom{\hspace{10pt}}3. Use \AMBAshort\ on $\bZ$ to produce a subset $\bS \subseteq \bZ$, then use \BAMBAshort\ to further produce subset $\bS'$,\\
\phantom{\hspace{22pt}}and then estimate using $\bS'$.\\
As we show in \cref{sec:appendix-putting-together}, the second approach yields an asymptotically smaller error when $|\bS| \cdot \sqrt{\frac{|\bSigma_{\bX}|}{|\bSigma_{\bZ}|}} < \max \left\{\frac{|\bSigma_{\bZ}|}{n}, \frac{\alpha_{\bS}}{|\bSigma_{\bZ}|}, \alpha_{\bS}^2 \right\}$; a similar kind of decision happens for deciding whether to use the third approach.
The proof of \cref{thm:pac-causal-effect-estimation-special-case} follows by a careful combination of these insights.

\begin{proofsketch}[\cref{thm:pac-causal-effect-estimation-special-case}]
Consider the following algorithm:\\
\phantom{\hspace{10pt}}1. Run \AMBAshort\ to obtain $\bS \subseteq \bZ$\\
\phantom{\hspace{10pt}}2. Check if $|\bS| \cdot \sqrt{\frac{|\bSigma_{\bX}|}{|\bSigma_{\bZ}|}} < \max \left\{ \frac{|\bSigma_{\bZ}|}{n}, \frac{\alpha_{\bS}}{|\bSigma_{\bZ}|}, \alpha_{\bS}^2 \right\}$\\
\phantom{\hspace{10pt}}3. If so, run \BAMBAshort\ to obtain $\bS' \subseteq \bZ$ and produce estimate $\hatbbP_{\bx}(\by) = \hatT_{\bS',\bx,\by}$\\
\phantom{\hspace{10pt}}4. Otherwise, produce estimate $\hatbbP_{\bx}(\by) = \hatT_{\bZ,\bx,\by}$\\
That is, depending on the check, we decide to perform estimation based on $\bS^* = \bS'$ or $\bS^* = \bZ$.
One can show that the bound holds for each case separately while noting that $\alpha_{\bS}, \alpha_{\bS'}, \alpha_{\bZ} \geq \alpha$.
\end{proofsketch}

\section{Conclusion}\label{sec:conclusion}

We now provide a brief final summary of our contributions.
In this paper, we have focused on the problem of estimating the causal effect $\bbP_\bx(\by)$ in the PAC setting, given access to a valid adjustment set $\bZ$, i.e., $\bZ$ such that $\bbP_\bx(\by) = T_{\bZ,\bx,\by}$, defined in \cref{eqn:covariate-adjustment}.

\begin{tabular}{@{}cp{0.9\linewidth}@{}}
(1) & In \cref{sec:sample-complexity}, we established a PAC bound for estimation of $T_{\bA,\bx,\by}$ for an arbitrary set $\bA \subseteq \VminusXY$, with linear dependence on $|\bSigma_\bA|$, the alphabet size of $\bA$.\\
(2) & In \cref{sec:AMBA}, we established a bound on the misspecification error $|T_{\bS,\bx,\by} - T_{\bA,\bx,\by}|$ for $\bS$ which is an $\eps$-Markov blanket of $\bX$ with respect to $\bZ$, and a PAC bound for discovering such a set $\bS$; leading to a new estimator of $T_{\bA,\bx,\by}$ with reduced sample complexity.\\
(3) & In \cref{sec:BAMBA}, we established conditions under which $T_{\bS',\bx,\by} = T_{\bS,\bx,\by}$, and gave a PAC bound for discovering a set $\bS'$ under which these conditions approximately hold; leading to a new estimator of $T_{\bA,\bx,\by}$ which goes beyond using the Markov blanket for adjustment.
\end{tabular}

% \begin{itemize}
%     % \setlength\itemsep{-0.5pt}
%     \item[(1)] In \cref{sec:sample-complexity}, we established a PAC bound for estimation of $T_{\bA,\bx,\by}$ for an arbitrary set $\bA \subseteq \VminusXY$, with linear dependence on $|\bSigma_\bA|$, the alphabet size of $\bA$.

%     \item[(2)] In \cref{sec:AMBA}, we established a bound on the misspecification error $|T_{\bS,\bx,\by} - T_{\bA,\bx,\by}|$ for $\bS$ which is an $\eps$-Markov blanket of $\bX$ with respect to $\bZ$, and a PAC bound for discovering such a set $\bS$; leading to a new estimator of $T_{\bA,\bx,\by}$ with reduced sample complexity.

%     \item[(3)] In \cref{sec:BAMBA}, we established conditions under which $T_{\bS',\bx,\by} = T_{\bS,\bx,\by}$, and gave a PAC bound for discovering a set $\bS'$ under which these conditions approximately hold; leading to a new estimator of $T_{\bA,\bx,\by}$ which goes beyond using the Markov blanket for adjustment.
% \end{itemize}

% The results of \cref{sec:sample-complexity}, \cref{sec:AMBA}, and \cref{sec:BAMBA} hold for any arbitrary subset $\bA \subseteq \bV$ and thus also apply to the setting when $\bA = \bZ$ is a valid adjustment set, allowing us to relate the estimated quantities $T_{\bS, \bx, \by}$ and $T_{\bS', \bx, \by}$ to $T_{\bZ, \bx, \by} = \bbP_{\bx}(\by)$.
Furthermore, in \cref{sec:related-work}, we review related work, give further interpretations of our results under the graphical causality framework, and connect our results to existing results in this line of work.
These results pave the way for future connections between causal discovery and causal effect estimation, while each standing alone as results of independent interest for fields such as local causal discovery.
We conclude with a non-exhaustive list of open problems raised by our work, which we expect to be of immediate future interest.

\begin{tabular}{@{}cp{0.9\linewidth}@{}}
(1) & Compared to the expectation bound of \cite{zeng2024causal}, our PAC bounds contains an additional $\wt{\cO}\left( \frac{|\bSigma_\bA|}{\eps^2} \right)$ term. Can this term be eliminated or shown to be necessary?\\
(2) & Our \AMBAshort\ algorithm for $\eps$-Markov blanket discovery performs an exhaustive search over subsets of increasing size.
Fortunately, this search is ``embarrassingly parallel", but is computationally prohibitive without access to parallel computing.
Is there a more computationally efficient algorithm for this problem with (nearly) the same sample complexity?
\end{tabular}

% \begin{itemize}
%     \item[(1)] Compared to the expectation bound of \cite{zeng2024causal}, our PAC bounds contains an additional $\wt{\cO}\left( \frac{|\bSigma_\bA|}{\eps^2} \right)$ term. Can this term be eliminated or shown to be necessary?

%     \item[(2)] Our \AMBAshort\ algorithm for $\eps$-Markov blanket discovery performs an exhaustive search over subsets of increasing size.
%     Fortunately, this search is ``embarrassingly parallel", but is computationally prohibitive without access to parallel computing.
%     Is there a more computationally efficient algorithm for this problem with (nearly) the same sample complexity?

%     % \item[(3)] Our \BAMBAshort\ algorithm introduces an unclear tradeoff between using $\bS'$ and $\bS$ for adjustment, due to the potential of having $\alpha_{\bS'} < \alpha_{\bS}$ when the conditions relating $\bS'$ and $\bS$ hold only approximately.
%     % Is there an algorithm which optimally combines \BAMBAshort\ and \AMBAshort\ to achieve the better of their two sample complexities?
% \end{itemize}

% % Acknowledgments---Will not appear in anonymized version
% \acks{We thank a bunch of people and funding agency.}
\acks{
This research/project is supported by the National Research Foundation, Singapore under its AI Singapore Programme (AISG Award No: AISG-PhD/2021-08-013).
This study was supported in part by Office of Naval Research Award No. N00014-21-1-2807.
The authors would like to thank Karthikeyan Shanmugam, Murat Kocaoglu, Raghavendra Addanki, Emilija Perkovi{\'c}, and Bijan Mazaheri for useful discussions, correspondence, and feedback on earlier draft versions.
}

\newpage
\bibliography{bib}

\begin{thebibliography}{101}
\providecommand{\natexlab}[1]{#1}
\providecommand{\url}[1]{\texttt{#1}}
\expandafter\ifx\csname urlstyle\endcsname\relax
  \providecommand{\doi}[1]{doi: #1}\else
  \providecommand{\doi}{doi: \begingroup \urlstyle{rm}\Url}\fi

\bibitem[Aliferis et~al.(2010{\natexlab{a}})Aliferis, Statnikov, Tsamardinos, Mani, and Koutsoukos]{aliferis2010local}
Constantin~F Aliferis, Alexander Statnikov, Ioannis Tsamardinos, Subramani Mani, and Xenofon~D Koutsoukos.
\newblock Local causal and {Markov} blanket induction for causal discovery and feature selection for classification part {I}: Algorithms and empirical evaluation.
\newblock \emph{Journal of Machine Learning Research}, 11\penalty0 (1), 2010{\natexlab{a}}.

\bibitem[Aliferis et~al.(2010{\natexlab{b}})Aliferis, Statnikov, Tsamardinos, Mani, and Koutsoukos]{aliferis2010local2}
Constantin~F Aliferis, Alexander Statnikov, Ioannis Tsamardinos, Subramani Mani, and Xenofon~D Koutsoukos.
\newblock Local causal and {Markov} blanket induction for causal discovery and feature selection for classification part {II}: Analysis and extensions.
\newblock \emph{Journal of Machine Learning Research}, 11\penalty0 (1), 2010{\natexlab{b}}.

\bibitem[Athey et~al.(2018)Athey, Imbens, and Wager]{athey2018approximate}
Susan Athey, Guido~W Imbens, and Stefan Wager.
\newblock Approximate residual balancing: Debiased inference of average treatment effects in high dimensions.
\newblock \emph{Journal of the Royal Statistical Society Series B: Statistical Methodology}, 80\penalty0 (4):\penalty0 597--623, 2018.

\bibitem[Belloni et~al.(2014)Belloni, Chernozhukov, and Hansen]{belloni2014inference}
Alexandre Belloni, Victor Chernozhukov, and Christian Hansen.
\newblock Inference on treatment effects after selection among high-dimensional controls.
\newblock \emph{Review of Economic Studies}, 81\penalty0 (2):\penalty0 608--650, 2014.

\bibitem[Bhattacharyya et~al.(2021)Bhattacharyya, Gayen, Price, and Vinodchandran]{bhattacharyya2021near}
Arnab Bhattacharyya, Sutanu Gayen, Eric Price, and NV~Vinodchandran.
\newblock Near-optimal learning of tree-structured distributions by {Chow-Liu}.
\newblock In \emph{Proceedings of the 53rd Annual ACM SIGACT Symposium on Theory of Computing}, pages 147--160, 2021.

\bibitem[Bhattacharyya et~al.(2022)Bhattacharyya, Gayen, Kandasamy, Raval, and Variyam]{bhattacharyya2022efficient}
Arnab Bhattacharyya, Sutanu Gayen, Saravanan Kandasamy, Vedant Raval, and Vinodchandran~N Variyam.
\newblock Efficient interventional distribution learning in the {PAC} framework.
\newblock In \emph{International Conference on Artificial Intelligence and Statistics}, pages 7531--7549. PMLR, 2022.

\bibitem[Bradic et~al.(2019)Bradic, Wager, and Zhu]{bradic2019sparsity}
Jelena Bradic, Stefan Wager, and Yinchu Zhu.
\newblock Sparsity double robust inference of average treatment effects.
\newblock \emph{arXiv preprint arXiv:1905.00744}, 2019.

\bibitem[Canonne(2020{\natexlab{a}})]{canonne2020short}
Cl{\'e}ment~L Canonne.
\newblock A short note on learning discrete distributions.
\newblock \emph{arXiv preprint arXiv:2002.11457}, 2020{\natexlab{a}}.

\bibitem[Canonne(2020{\natexlab{b}})]{canonne2020survey}
Cl{\'e}ment~L Canonne.
\newblock A survey on distribution testing: Your data is big. but is it blue?
\newblock \emph{Theory of Computing}, pages 1--100, 2020{\natexlab{b}}.

\bibitem[Canonne(2022)]{canonne2022topics}
Cl{\'e}ment~L Canonne.
\newblock Topics and techniques in distribution testing: A biased but representative sample.
\newblock \emph{Foundations and Trends{\textregistered} in Communications and Information Theory}, 19\penalty0 (6):\penalty0 1032--1198, 2022.

\bibitem[Canonne et~al.(2018)Canonne, Diakonikolas, Kane, and Stewart]{canonne2018testing}
Cl{\'e}ment~L Canonne, Ilias Diakonikolas, Daniel~M Kane, and Alistair Stewart.
\newblock Testing conditional independence of discrete distributions.
\newblock In \emph{Proceedings of the 50th Annual ACM SIGACT Symposium on Theory of Computing}, pages 735--748, 2018.

\bibitem[Chang et~al.(2024)Chang, Guo, and Malinsky]{chang2024post}
Ting-Hsuan Chang, Zijian Guo, and Daniel Malinsky.
\newblock Post-selection inference for causal effects after causal discovery.
\newblock \emph{arXiv preprint arXiv:2405.06763}, 2024.

\bibitem[Charikar et~al.(2019)Charikar, Shiragur, and Sidford]{charikar2019general}
Moses Charikar, Kirankumar Shiragur, and Aaron Sidford.
\newblock A general framework for symmetric property estimation.
\newblock \emph{Advances in Neural Information Processing Systems}, 32, 2019.

\bibitem[Cheng et~al.(2022)Cheng, Li, Liu, Yu, Le, and Liu]{cheng2022toward}
Debo Cheng, Jiuyong Li, Lin Liu, Kui Yu, Thuc~Duy Le, and Jixue Liu.
\newblock Toward unique and unbiased causal effect estimation from data with hidden variables.
\newblock \emph{IEEE Transactions on Neural Networks and Learning Systems}, 2022.

\bibitem[Chernozhukov et~al.(2018)Chernozhukov, Chetverikov, Demirer, Duflo, Hansen, Newey, and Robins]{chernozhukov2018double}
Victor Chernozhukov, Denis Chetverikov, Mert Demirer, Esther Duflo, Christian Hansen, Whitney Newey, and James Robins.
\newblock Double/debiased machine learning for treatment and structural parameters, 2018.

\bibitem[Chickering(2002)]{chickering2002optimal}
David~Maxwell Chickering.
\newblock Optimal structure identification with greedy search.
\newblock \emph{Journal of Machine Learning Research}, 3:\penalty0 507--554, 2002.

\bibitem[Choo and Shiragur(2023{\natexlab{a}})]{choo2023adaptivity}
Davin Choo and Kirankumar Shiragur.
\newblock Adaptivity complexity for causal graph discovery.
\newblock In \emph{Proceedings of the Thirty-Ninth Conference on Uncertainty in Artificial Intelligence}, UAI '23, 2023{\natexlab{a}}.

\bibitem[Choo and Shiragur(2023{\natexlab{b}})]{choo2023new}
Davin Choo and Kirankumar Shiragur.
\newblock {New metrics and search algorithms for weighted causal DAGs}.
\newblock In \emph{International Conference on Machine Learning}, pages 5868--5903. PMLR, 2023{\natexlab{b}}.

\bibitem[Choo and Shiragur(2023{\natexlab{c}})]{choo2023subset}
Davin Choo and Kirankumar Shiragur.
\newblock {Subset verification and search algorithms for causal DAGs}.
\newblock In \emph{International Conference on Artificial Intelligence and Statistics}, 2023{\natexlab{c}}.

\bibitem[Choo et~al.(2022)Choo, Shiragur, and Bhattacharyya]{choo2022verification}
Davin Choo, Kirankumar Shiragur, and Arnab Bhattacharyya.
\newblock {Verification and search algorithms for causal DAGs}.
\newblock \emph{Advances in Neural Information Processing Systems}, 35, 2022.

\bibitem[Choo et~al.(2023)Choo, Gouleakis, and Bhattacharyya]{choo2023active}
Davin Choo, Themistoklis Gouleakis, and Arnab Bhattacharyya.
\newblock Active causal structure learning with advice.
\newblock In \emph{International Conference on Machine Learning}, pages 5838--5867. PMLR, 2023.

\bibitem[Colombo et~al.(2012)Colombo, Maathuis, Kalisch, and Richardson]{colombo2012learning}
Diego Colombo, Marloes~H. Maathuis, Markus Kalisch, and Thomas~S. Richardson.
\newblock {Learning high-dimensional directed acyclic graphs with latent and selection variables}.
\newblock \emph{The Annals of Statistics}, pages 294--321, 2012.

\bibitem[Dong and Wang(2022)]{dong2022nonparametric}
Hao Dong and Yuedong Wang.
\newblock Nonparametric neighborhood selection in graphical models.
\newblock \emph{Journal of Machine Learning Research}, 23\penalty0 (317):\penalty0 1--36, 2022.

\bibitem[D’Amour et~al.(2021)D’Amour, Ding, Feller, Lei, and Sekhon]{d2021overlap}
Alexander D’Amour, Peng Ding, Avi Feller, Lihua Lei, and Jasjeet Sekhon.
\newblock Overlap in observational studies with high-dimensional covariates.
\newblock \emph{Journal of Econometrics}, 221\penalty0 (2):\penalty0 644--654, 2021.

\bibitem[Eberhardt(2007)]{eberhardt2007causation}
Frederick Eberhardt.
\newblock {Causation and intervention}.
\newblock \emph{Unpublished doctoral dissertation, Carnegie Mellon University}, page~93, 2007.

\bibitem[Eberhardt et~al.(2005)Eberhardt, Glymour, and Scheines]{eberhardt2005number}
Frederick Eberhardt, Clark Glymour, and Richard Scheines.
\newblock {On the number of experiments sufficient and in the worst case necessary to identify all causal relations among N variables}.
\newblock In \emph{Proceedings of the Twenty-First Conference on Uncertainty in Artificial Intelligence}, pages 178--184, 2005.

\bibitem[Eberhardt et~al.(2006)Eberhardt, Glymour, and Scheines]{eberhardt2006n}
Frederick Eberhardt, Clark Glymour, and Richard Scheines.
\newblock {N-1 experiments suffice to determine the causal relations among N variables}.
\newblock In \emph{Innovations in Machine Learning}, pages 97--112. Springer, 2006.

\bibitem[Entner et~al.(2013)Entner, Hoyer, and Spirtes]{entner2013data}
Doris Entner, Patrik Hoyer, and Peter Spirtes.
\newblock Data-driven covariate selection for nonparametric estimation of causal effects.
\newblock In \emph{Artificial Intelligence and Statistics}, pages 256--264. PMLR, 2013.

\bibitem[Farrell et~al.(2021)Farrell, Liang, and Misra]{farrell2021deep}
Max~H Farrell, Tengyuan Liang, and Sanjog Misra.
\newblock Deep neural networks for estimation and inference.
\newblock \emph{Econometrica}, 89\penalty0 (1):\penalty0 181--213, 2021.

\bibitem[Frey et~al.(2003)Frey, Fisher, Tsamardinos, Aliferis, and Statnikov]{frey2003identifying}
Lewis Frey, Douglas Fisher, Ioannis Tsamardinos, Constantin~F Aliferis, and Alexander Statnikov.
\newblock Identifying {Markov} blankets with decision tree induction.
\newblock In \emph{Third IEEE International Conference on Data Mining}, pages 59--66. IEEE, 2003.

\bibitem[Fu and Desmarais(2008)]{fu2008fast}
Shunkai Fu and Michel~C Desmarais.
\newblock Fast {Markov} blanket discovery algorithm via local learning within single pass.
\newblock In \emph{Advances in Artificial Intelligence: 21st Conference of the Canadian Society for Computational Studies of Intelligence, Canadian AI 2008 Windsor, Canada, May 28-30, 2008 Proceedings 21}, pages 96--107. Springer, 2008.

\bibitem[Gao and Aragam(2021)]{gao2021efficient}
Ming Gao and Bryon Aragam.
\newblock {Efficient Bayesian network structure learning via local Markov boundary search}.
\newblock \emph{Advances in Neural Information Processing Systems}, 34:\penalty0 4301--4313, 2021.

\bibitem[Gao et~al.(2020)Gao, Ding, and Aragam]{gao2020polynomial}
Ming Gao, Yi~Ding, and Bryon Aragam.
\newblock A polynomial-time algorithm for learning nonparametric causal graphs.
\newblock \emph{Advances in Neural Information Processing Systems}, 33:\penalty0 11599--11611, 2020.

\bibitem[Gao et~al.(2022)Gao, Tai, and Aragam]{gao2022optimal}
Ming Gao, Wai~Ming Tai, and Bryon Aragam.
\newblock {Optimal estimation of Gaussian DAG models}.
\newblock \emph{arXiv preprint arXiv:2201.10548}, 2022.

\bibitem[Gao and Ji(2015)]{gao2015local}
Tian Gao and Qiang Ji.
\newblock Local causal discovery of direct causes and effects.
\newblock \emph{Advances in Neural Information Processing Systems}, 28, 2015.

\bibitem[Gao and Ji(2016)]{gao2016efficient}
Tian Gao and Qiang Ji.
\newblock Efficient {Markov} blanket discovery and its application.
\newblock \emph{IEEE transactions on Cybernetics}, 47\penalty0 (5):\penalty0 1169--1179, 2016.

\bibitem[Glymour et~al.(2019)Glymour, Zhang, and Spirtes]{glymour2019review}
Clark Glymour, Kun Zhang, and Peter Spirtes.
\newblock Review of causal discovery methods based on graphical models.
\newblock \emph{Frontiers in Genetics}, 10:\penalty0 524, 2019.

\bibitem[Greenewald et~al.(2019)Greenewald, Katz, Shanmugam, Magliacane, Kocaoglu, Boix-Adser\`{a}, and Bresler]{greenewald2019sample}
Kristjan Greenewald, Dmitriy Katz, Karthikeyan Shanmugam, Sara Magliacane, Murat Kocaoglu, Enric Boix-Adser\`{a}, and Guy Bresler.
\newblock {Sample efficient active learning of causal trees}.
\newblock \emph{Advances in Neural Information Processing Systems}, 32, 2019.

\bibitem[Greenewald et~al.(2021)Greenewald, Shanmugam, and Katz]{greenewald2021high}
Kristjan Greenewald, Karthikeyan Shanmugam, and Dmitriy Katz.
\newblock High-dimensional feature selection for sample efficient treatment effect estimation.
\newblock In \emph{International Conference on Artificial Intelligence and Statistics}, pages 2224--2232. PMLR, 2021.

\bibitem[Gupta et~al.(2023)Gupta, Childers, and Lipton]{gupta2023local}
Shantanu Gupta, David Childers, and Zachary~Chase Lipton.
\newblock Local causal discovery for estimating causal effects.
\newblock In \emph{Conference on Causal Learning and Reasoning}, pages 408--447. PMLR, 2023.

\bibitem[Hauser and B\"{u}hlmann(2012)]{hauser2012characterization}
Alain Hauser and Peter B\"{u}hlmann.
\newblock {Characterization and greedy learning of interventional Markov equivalence classes of directed acyclic graphs}.
\newblock \emph{The Journal of Machine Learning Research}, 13\penalty0 (1):\penalty0 2409--2464, 2012.

\bibitem[Heinze-Deml et~al.(2018)Heinze-Deml, Maathuis, and Meinshausen]{heinze2018causal}
Christina Heinze-Deml, Marloes~H Maathuis, and Nicolai Meinshausen.
\newblock Causal structure learning.
\newblock \emph{Annual Review of Statistics and Its Application}, 5\penalty0 (1):\penalty0 371--391, 2018.

\bibitem[Hoeffding(1994)]{hoeffding1994probability}
Wassily Hoeffding.
\newblock Probability inequalities for sums of bounded random variables.
\newblock \emph{The Collected Works of Wassily Hoeffding}, pages 409--426, 1994.

\bibitem[Hu et~al.(2014)Hu, Li, and Vetta]{hu2014randomized}
Huining Hu, Zhentao Li, and Adrian Vetta.
\newblock {Randomized experimental design for causal graph discovery}.
\newblock \emph{Advances in Neural Information Processing Systems}, 27, 2014.

\bibitem[Huang and Valtorta(2006)]{huang2006pearl}
Yimin Huang and Marco Valtorta.
\newblock Pearl's calculus of intervention is complete.
\newblock In \emph{Proceedings of the Twenty-Second Conference on Uncertainty in Artificial Intelligence}, pages 217--224, 2006.

\bibitem[Jaber et~al.(2020)Jaber, Kocaoglu, Shanmugam, and Bareinboim]{jaber2020causal}
Amin Jaber, Murat Kocaoglu, Karthikeyan Shanmugam, and Elias Bareinboim.
\newblock Causal discovery from soft interventions with unknown targets: Characterization and learning.
\newblock \emph{Advances in Neural Information Processing Systems}, 33:\penalty0 9551--9561, 2020.

\bibitem[Jensen(1906)]{jensen1906fonctions}
Johan Ludwig William~Valdemar Jensen.
\newblock Sur les fonctions convexes et les in{\'e}galit{\'e}s entre les valeurs moyennes.
\newblock \emph{Acta mathematica}, 30\penalty0 (1):\penalty0 175--193, 1906.

\bibitem[Kalisch and B{\"u}hlman(2007)]{kalisch2007estimating}
Markus Kalisch and Peter B{\"u}hlman.
\newblock Estimating high-dimensional directed acyclic graphs with the {PC}-algorithm.
\newblock \emph{Journal of Machine Learning Research}, 8\penalty0 (3), 2007.

\bibitem[Kearns et~al.(1994)Kearns, Mansour, Ron, Rubinfeld, Schapire, and Sellie]{kearns1994learnability}
Michael Kearns, Yishay Mansour, Dana Ron, Ronitt Rubinfeld, Robert~E Schapire, and Linda Sellie.
\newblock On the learnability of discrete distributions.
\newblock In \emph{Proceedings of the Twenty-Sixth Annual ACM symposium on Theory of Computing}, pages 273--282, 1994.

\bibitem[Kocaoglu et~al.(2017)Kocaoglu, Dimakis, and Vishwanath]{kocaoglu2017cost}
Murat Kocaoglu, Alex Dimakis, and Sriram Vishwanath.
\newblock Cost-optimal learning of causal graphs.
\newblock In \emph{International Conference on Machine Learning}, pages 1875--1884. PMLR, 2017.

\bibitem[Koller and Sahami(1996)]{koller1996toward}
Daphne Koller and Mehran Sahami.
\newblock Toward optimal feature selection.
\newblock Technical report, Stanford InfoLab, 1996.

\bibitem[Lam(2023)]{lam2023causal}
Wai-yin Lam.
\newblock Causal razors.
\newblock \emph{arXiv preprint arXiv:2302.10331}, 2023.

\bibitem[Lindgren et~al.(2018)Lindgren, Kocaoglu, Dimakis, and Vishwanath]{lindgren2018experimental}
Erik~M. Lindgren, Murat Kocaoglu, Alexandros~G. Dimakis, and Sriram Vishwanath.
\newblock {Experimental design for cost-aware learning of causal graphs}.
\newblock \emph{Advances in Neural Information Processing Systems}, 31, 2018.

\bibitem[Ling et~al.(2020)Ling, Yu, Wang, Li, and Wu]{ling2020using}
Zhaolong Ling, Kui Yu, Hao Wang, Lei Li, and Xindong Wu.
\newblock Using feature selection for local causal structure learning.
\newblock \emph{IEEE Transactions on Emerging Topics in Computational Intelligence}, 5\penalty0 (4):\penalty0 530--540, 2020.

\bibitem[Maathuis et~al.(2018)Maathuis, Drton, Lauritzen, and Wainwright]{maathuis2018handbook}
Marloes Maathuis, Mathias Drton, Steffen Lauritzen, and Martin Wainwright.
\newblock \emph{Handbook of graphical models}.
\newblock CRC Press, 2018.

\bibitem[Maathuis and Colombo(2015)]{maathuis2015generalized}
Marloes~H Maathuis and Diego Colombo.
\newblock A generalized back-door criterion.
\newblock \emph{The Annals of Statistics}, 43\penalty0 (3):\penalty0 1060, 2015.

\bibitem[Maathuis et~al.(2009)Maathuis, Kalisch, and B{\"u}hlmann]{maathuis2009estimating}
Marloes~H Maathuis, Markus Kalisch, and Peter B{\"u}hlmann.
\newblock Estimating high-dimensional intervention effects from observational data.
\newblock \emph{The Annals of Statistics}, pages 3133--3164, 2009.

\bibitem[Malinsky(2024)]{malinsky2024cautious}
Daniel Malinsky.
\newblock A cautious approach to constraint-based causal model selection.
\newblock \emph{arXiv preprint arXiv:2404.18232}, 2024.

\bibitem[Mani and Cooper(2004)]{mani2004causal}
Subramani Mani and Gregory~F Cooper.
\newblock Causal discovery using a {Bayesian} local causal discovery algorithm.
\newblock In \emph{MEDINFO 2004}, pages 731--735. IOS Press, 2004.

\bibitem[Pearl(1995)]{pearl1995causal}
Judea Pearl.
\newblock Causal diagrams for empirical research.
\newblock \emph{Biometrika}, 82\penalty0 (4):\penalty0 669--688, 1995.

\bibitem[Pearl(2009)]{pearl2009causal}
Judea Pearl.
\newblock {Causal inference in statistics: An overview}.
\newblock \emph{Statistics Surveys}, 3\penalty0 (none):\penalty0 96 -- 146, 2009.
\newblock \doi{10.1214/09-SS057}.
\newblock URL \url{https://doi.org/10.1214/09-SS057}.

\bibitem[Pena et~al.(2007)Pena, Nilsson, Bj{\"o}rkegren, and Tegn{\'e}r]{pena2007towards}
Jose~M Pena, Roland Nilsson, Johan Bj{\"o}rkegren, and Jesper Tegn{\'e}r.
\newblock Towards scalable and data efficient learning of {Markov} boundaries.
\newblock \emph{International Journal of Approximate Reasoning}, 45\penalty0 (2):\penalty0 211--232, 2007.

\bibitem[Perkovic(2020)]{perkovic2020identifying}
Emilija Perkovic.
\newblock Identifying causal effects in maximally oriented partially directed acyclic graphs.
\newblock In \emph{Conference on Uncertainty in Artificial Intelligence}, pages 530--539. PMLR, 2020.

\bibitem[Perkovi{\'c} et~al.(2018)Perkovi{\'c}, Textor, and Kalisch]{perkovic2018complete}
Emilija Perkovi{\'c}, Johannes Textor, and Markus Kalisch.
\newblock Complete graphical characterization and construction of adjustment sets in {Markov} equivalence classes of ancestral graphs.
\newblock \emph{Journal of Machine Learning Research}, 18:\penalty0 1--62, 2018.

\bibitem[Ramsey(2006)]{ramsey2006pc}
Joe Ramsey.
\newblock A {PC}-style {Markov} blanket search for high dimensional datasets.
\newblock Technical report, Carnegie Mellon University, 2006.
\newblock Technical Report, CMU-PHIL-177.

\bibitem[Richardson(2003)]{richardson2003markov}
Thomas Richardson.
\newblock Markov properties for acyclic directed mixed graphs.
\newblock \emph{Scandinavian Journal of Statistics}, 30\penalty0 (1):\penalty0 145--157, 2003.

\bibitem[Richardson et~al.(2023)Richardson, Evans, Robins, and Shpitser]{richardson2023nested}
Thomas~S Richardson, Robin~J Evans, James~M Robins, and Ilya Shpitser.
\newblock Nested {Markov} properties for acyclic directed mixed graphs.
\newblock \emph{The Annals of Statistics}, 51\penalty0 (1):\penalty0 334--361, 2023.

\bibitem[Rigollet and H{\"u}tter(2023)]{rigollet2023high}
Philippe Rigollet and Jan-Christian H{\"u}tter.
\newblock High-dimensional statistics.
\newblock \emph{arXiv preprint arXiv:2310.19244}, 2023.

\bibitem[Rubin(1974)]{rubin1974estimating}
Donald~B Rubin.
\newblock Estimating causal effects of treatments in randomized and nonrandomized studies.
\newblock \emph{Journal of educational Psychology}, 66\penalty0 (5):\penalty0 688, 1974.

\bibitem[Schuler and Rose(2017)]{schuler2017targeted}
Megan~S Schuler and Sherri Rose.
\newblock Targeted maximum likelihood estimation for causal inference in observational studies.
\newblock \emph{American Journal of Epidemiology}, 185\penalty0 (1):\penalty0 65--73, 2017.

\bibitem[Sekhon(2009)]{sekhon2009neyman}
Jasjeet Sekhon.
\newblock The {Neyman-Rubin} model of causal inference and estimation via matching methods.
\newblock \emph{The Oxford Handbook of Political Methodology}, pages 271--299, 2009.

\bibitem[Shah et~al.(2022)Shah, Shanmugam, and Ahuja]{shah2022finding}
Abhin Shah, Karthikeyan Shanmugam, and Kartik Ahuja.
\newblock {Finding valid adjustments under non-ignorability with minimal DAG knowledge}.
\newblock In \emph{International Conference on Artificial Intelligence and Statistics}, pages 5538--5562. PMLR, 2022.

\bibitem[Shah et~al.(2023)Shah, Shanmugam, and Kocaoglu]{shah2023front}
Abhin Shah, Karthikeyan Shanmugam, and Murat Kocaoglu.
\newblock {Front-door adjustment beyond Markov equivalence with limited graph knowledge}.
\newblock \emph{Advances in Neural Information Processing Systems}, 2023.

\bibitem[Shanmugam et~al.(2015)Shanmugam, Kocaoglu, Dimakis, and Vishwanath]{shanmugam2015learning}
Karthikeyan Shanmugam, Murat Kocaoglu, Alexandros~G. Dimakis, and Sriram Vishwanath.
\newblock Learning causal graphs with small interventions.
\newblock \emph{Advances in Neural Information Processing Systems}, 28, 2015.

\bibitem[Shortreed and Ertefaie(2017)]{shortreed2017outcome}
Susan~M Shortreed and Ashkan Ertefaie.
\newblock Outcome-adaptive lasso: Variable selection for causal inference.
\newblock \emph{Biometrics}, 73\penalty0 (4):\penalty0 1111--1122, 2017.

\bibitem[Shpitser and Pearl(2006)]{shpitser2006identification}
Ilya Shpitser and Judea Pearl.
\newblock Identification of joint interventional distributions in recursive semi-{M}arkovian causal models.
\newblock In \emph{Proceedings of the National Conference on Artificial Intelligence}, volume~21, page 1219. Menlo Park, CA; Cambridge, MA; London; AAAI Press; MIT Press; 1999, 2006.

\bibitem[Shpitser et~al.(2010)Shpitser, VanderWeele, and Robins]{shpitser2010validity}
Ilya Shpitser, Tyler VanderWeele, and James~M Robins.
\newblock On the validity of covariate adjustment for estimating causal effects.
\newblock In \emph{Proceedings of the Twenty-Sixth Conference on Uncertainty in Artificial Intelligence}, pages 527--536, 2010.

\bibitem[Solus et~al.(2021)Solus, Wang, and Uhler]{solus2021consistency}
Liam Solus, Yuhao Wang, and Caroline Uhler.
\newblock {Consistency guarantees for greedy permutation-based causal inference algorithms}.
\newblock \emph{Biometrika}, 108\penalty0 (4):\penalty0 795--814, 2021.

\bibitem[Spirtes et~al.(2000)Spirtes, Glymour, Scheines, and Heckerman]{spirtes2000causation}
Peter Spirtes, Clark~N Glymour, Richard Scheines, and David Heckerman.
\newblock \emph{{Causation, Prediction, and Search}}.
\newblock MIT press, 2000.

\bibitem[Splawa-Neyman et~al.(1990)Splawa-Neyman, Dabrowska, and Speed]{splawa1990application}
Jerzy Splawa-Neyman, Dorota~M Dabrowska, and Terrence~P Speed.
\newblock On the application of probability theory to agricultural experiments. essay on principles. section 9.
\newblock \emph{Statistical Science}, pages 465--472, 1990.

\bibitem[Squires and Uhler(2023)]{squires2023review}
Chandler Squires and Caroline Uhler.
\newblock Causal structure learning: A combinatorial perspective.
\newblock \emph{Foundations of Computational Mathematics}, 23\penalty0 (5):\penalty0 1781--1815, 2023.

\bibitem[Squires et~al.(2020)Squires, Magliacane, Greenewald, Katz, Kocaoglu, and Shanmugam]{squires2020active}
Chandler Squires, Sara Magliacane, Kristjan Greenewald, Dmitriy Katz, Murat Kocaoglu, and Karthikeyan Shanmugam.
\newblock {Active structure learning of causal DAGs via directed clique trees}.
\newblock \emph{Advances in Neural Information Processing Systems}, 33:\penalty0 21500--21511, 2020.

\bibitem[Tian and Pearl(2002)]{tian2002general}
Jin Tian and Judea Pearl.
\newblock A general identification condition for causal effects.
\newblock In \emph{Eighteenth national conference on Artificial intelligence}, pages 567--573, 2002.

\bibitem[Tsamardinos et~al.(2003)Tsamardinos, Aliferis, Statnikov, and Statnikov]{tsamardinos2003algorithms}
Ioannis Tsamardinos, Constantin~F Aliferis, Alexander~R Statnikov, and Er~Statnikov.
\newblock Algorithms for large scale {Markov} blanket discovery.
\newblock In \emph{FLAIRS}, volume~2, pages 376--81, 2003.

\bibitem[Tsamardinos et~al.(2006)Tsamardinos, Brown, and Aliferis]{tsamardinos2006max}
Ioannis Tsamardinos, Laura~E Brown, and Constantin~F Aliferis.
\newblock The max-min hill-climbing {Bayesian} network structure learning algorithm.
\newblock \emph{Machine Learning}, 65:\penalty0 31--78, 2006.

\bibitem[Uhler et~al.(2013)Uhler, Raskutti, B{\"u}hlmann, and Yu]{uhler2013geometry}
Caroline Uhler, Garvesh Raskutti, Peter B{\"u}hlmann, and Bin Yu.
\newblock Geometry of the faithfulness assumption in causal inference.
\newblock \emph{The Annals of Statistics}, pages 436--463, 2013.

\bibitem[Valiant(1984)]{valiant1984theory}
Leslie~G Valiant.
\newblock A theory of the learnable.
\newblock \emph{Communications of the ACM}, 27\penalty0 (11):\penalty0 1134--1142, 1984.

\bibitem[Valiant(2008)]{valiant2008testing}
Paul Valiant.
\newblock Testing symmetric properties of distributions.
\newblock In \emph{Proceedings of the Fortieth Annual ACM symposium on Theory of Computing}, pages 383--392, 2008.

\bibitem[Van Der~Laan and Rubin(2006)]{van2006targeted}
Mark~J Van Der~Laan and Daniel Rubin.
\newblock Targeted maximum likelihood learning.
\newblock \emph{The International Journal of Biostatistics}, 2\penalty0 (1), 2006.

\bibitem[van~der Zander et~al.(2014)van~der Zander, Li{\'s}kiewicz, and Textor]{van2014constructing}
Benito van~der Zander, Maciej Li{\'s}kiewicz, and Johannes Textor.
\newblock Constructing separators and adjustment sets in ancestral graphs.
\newblock In \emph{Proceedings of the Thirtieth Conference on Uncertainty in Artificial Intelligence}, UAI '14, 2014.

\bibitem[Vowels et~al.(2022)Vowels, Camgoz, and Bowden]{vowels2022d}
Matthew~J Vowels, Necati~Cihan Camgoz, and Richard Bowden.
\newblock D'ya like {DAGs}? {A} survey on structure learning and causal discovery.
\newblock \emph{ACM Computing Surveys}, 55\penalty0 (4):\penalty0 1--36, 2022.

\bibitem[Wadhwa and Dong(2021)]{wadhwa2021sample}
Samir Wadhwa and Roy Dong.
\newblock On the sample complexity of causal discovery and the value of domain expertise.
\newblock \emph{arXiv preprint arXiv:2102.03274}, 2021.

\bibitem[Wang et~al.(2014)Wang, Zhou, Zhao, and Geng]{wang2014discovering}
Changzhang Wang, You Zhou, Qiang Zhao, and Zhi Geng.
\newblock Discovering and orienting the edges connected to a target variable in a {DAG} via a sequential local learning approach.
\newblock \emph{Computational Statistics \& Data Analysis}, 77:\penalty0 252--266, 2014.

\bibitem[Wang and Shah(2020)]{wang2020debiased}
Yuhao Wang and Rajen~D Shah.
\newblock Debiased inverse propensity score weighting for estimation of average treatment effects with high-dimensional confounders.
\newblock \emph{arXiv preprint arXiv:2011.08661}, 2020.

\bibitem[Wang et~al.(2017)Wang, Solus, Yang, and Uhler]{wang2017permutation}
Yuhao Wang, Liam Solus, Karren Yang, and Caroline Uhler.
\newblock Permutation-based causal inference algorithms with interventions.
\newblock \emph{Advances in Neural Information Processing Systems}, 30, 2017.

\bibitem[Witte and Didelez(2019)]{witte2019covariate}
Janine Witte and Vanessa Didelez.
\newblock Covariate selection strategies for causal inference: Classification and comparison.
\newblock \emph{Biometrical Journal}, 61\penalty0 (5):\penalty0 1270--1289, 2019.

\bibitem[Yadlowsky et~al.(2022)Yadlowsky, Namkoong, Basu, Duchi, and Tian]{yadlowsky2022bounds}
Steve Yadlowsky, Hongseok Namkoong, Sanjay Basu, John Duchi, and Lu~Tian.
\newblock Bounds on the conditional and average treatment effect with unobserved confounding factors.
\newblock \emph{The Annals of Statistics}, 50\penalty0 (5):\penalty0 2587--2615, 2022.

\bibitem[Yin et~al.(2008)Yin, Zhou, Wang, He, Zheng, and Geng]{yin2008partial}
Jianxin Yin, You Zhou, Changzhang Wang, Ping He, Cheng Zheng, and Zhi Geng.
\newblock Partial orientation and local structural learning of causal networks for prediction.
\newblock In \emph{Causation and Prediction Challenge}, pages 93--105. PMLR, 2008.

\bibitem[Yu et~al.(2020)Yu, Guo, Liu, Li, Wang, Ling, and Wu]{yu2020causality}
Kui Yu, Xianjie Guo, Lin Liu, Jiuyong Li, Hao Wang, Zhaolong Ling, and Xindong Wu.
\newblock Causality-based feature selection: Methods and evaluations.
\newblock \emph{ACM Computing Surveys (CSUR)}, 53\penalty0 (5):\penalty0 1--36, 2020.

\bibitem[Zeng et~al.(2024)Zeng, Balakrishnan, Han, and Kennedy]{zeng2024causal}
Zhenghao Zeng, Sivaraman Balakrishnan, Yanjun Han, and Edward~H Kennedy.
\newblock Causal inference with high-dimensional discrete covariates.
\newblock \emph{arXiv preprint arXiv:2405.00118}, 2024.

\bibitem[Zheng et~al.(2018)Zheng, Aragam, Ravikumar, and Xing]{zheng2018dags}
Xun Zheng, Bryon Aragam, Pradeep~K Ravikumar, and Eric~P Xing.
\newblock {DAGs with NO TEARS: Continuous optimization for structure learning}.
\newblock \emph{Advances in Neural Information Processing Systems}, 31, 2018.

\end{thebibliography}

\newpage
\clearpage
{
\hypersetup{linkcolor=blue}
\renewcommand{\contentsname}{Contents of Appendix}
\tableofcontents
}
\addtocontents{toc}
{\protect\setcounter{tocdepth}{2}}

\newpage
\appendix

\section{Related work}
\label{sec:related-work}

For sake of clarity, we have positioned this work from the perspective of causal effect estimation, and emphasized how our primary assumption (knowledge of a valid adjustment set $\bZ$) is compatible with both the potential outcomes (PO) and graphical frameworks for causality.
Now, we further explore connections between our work and existing work from both of these perspectives, beginning with the graphical perspective.

In \cref{sec:related-work-causal-effect-estimation}, we review graphical characterizations of valid adjustment sets given a causal graph (or an equivalence class of graphs) as input.
In some domains, these causal graphs may be constructed from expert knowledge, but when $\bV$ is large or the system under consideration is not well-studied, practitioners may be unable to specify an accurate causal graph.
Thus, we also review conditions for causal effect estimation which require minimal graphical knowledge.
In \cref{sec:related-work-causal-graph-discovery}, we review a different approach to the unspecified graph setting; in particular, we discuss methods for learning all or part of a causal graph from data, and relate these to our work by providing graphical interpretations of our \AMBAshort\ and \BAMBAshort\ methods.
Finally, we pivot to the PO perspective in \cref{sec:related-work-feature-selection}, focusing on existing results on the statistical aspects of causal effect estimation.

\subsection{Causal effect identification in the graphical setting}
\label{sec:related-work-causal-effect-estimation}

In the graphical framework, several classes of graphs have been used to formally define causal assumptions about a system, with the nodes of these graphs corresponding to the observed variables $\bV$.
Here, we focus our discussion on acyclic directed mixed graphs (ADMGs), which consist of both directed edges (of the form $V_1 \to V_2$) and bidirected edges (of the form $V_1 \leftrightarrow V_2$).
Such graphs can be used to model systems that are subject latent (a.k.a.\ unobserved) confounding, but which are not subject to selection bias.
As a special case, an ADMG with no bidirected edges is called a directed acyclic graph (DAG), representing a system with no latent confounding.
To distinguish between these cases, we use the term \textit{causally sufficient} to refer to settings where latent confounding is assumed to be absent; thus \textit{causally insufficient} refers to settings where latent confounding is allowed to exist.
Finally, given an ADMG $\cG$, an intervention that sets the variables $\bX \subseteq \bV$ equal to the values $\bx$ can be represented by a new ADMG, the \textit{mutilated graph} $\cG_{\overline{\bX}}$, which is obtained by copying $\cG$ and then removing all edges of the form $V \to X$ or $V \leftrightarrow X$ for an $X \in \bX$ and $V \in \bV$.

\subsubsection{Causal effect identification given a graph}

The graph $\cG$ and $\cG_{\overline{\bX}}$ can be used to model the behavior of the system, and to derive relationships between the observational distribution $\bbP(\bV)$ and the interventional distribution $\bbP_\bx(\bV)$.
The details of these definitions are not necessary for our discussion; instead, we describe some of the major results which have been shown when taking these definitions as a starting point.
Most importantly for our discussion, these definitions can be used to derive \textit{identification formulas}, which express interventional queries $\bbP_\bx(\by)$ in terms of equations which only involve $\bbP(\bV)$, and thus permit causal effects to be estimated from only observational data.
These identification formulas can be derived algorithmically, for example using the \textit{ID Algorithm} \citep{tian2002general}, which is both sound and complete \citep{shpitser2006identification,huang2006pearl}.
PAC bounds have also been established for the ID algorithm in \citep{bhattacharyya2022efficient}.

Importantly, the ID Algorithm may be able to construct an identification formula even if the adjustment formula (\cref{eqn:covariate-adjustment}) does not hold for any set $\bA \subseteq \VminusXY$.
However, in practice, the adjustment formula remains one of the most widely-used and well-studied identification approaches, due in part to its simplicity and its familiarity in the potential outcomes literature (see \cref{sec:appendix-potential-outcomes-adjustment}).
Particular attention has been given to developing graphical criteria for determining whether a set $\bA \subseteq \VminusXY$ is a valid adjustment set for $\bbP_\bx(\by)$, and algorithmically finding such a set if one exists.
A simple and intuitive condition for adjustment validity is the \textit{backdoor criterion} \citep{pearl1995causal} in DAGs, which is sound, but not complete.
This criterion has been refined by long line of work on sound and complete conditions \citep{shpitser2010validity,van2014constructing,maathuis2015generalized,perkovic2018complete,perkovic2020identifying} for different classes (and equivalence classes) of causal graphs.
Our results further contribute to this line of work: as we discuss in \cref{sec:introduction}, \cref{lem:minimal-soundness} directly implies a graphical condition that is sound for determining whether a subset is an adjustment set given a valid adjustment set; \cref{sec:appendix-completness} shows that under additional assumptions, this condition is also complete.

\subsubsection{Causal effect identification without a graph}

While the criteria above are stated in terms of a known causal graph $\cG$, they can also be used in our setting to derive conditions under which \cref{eqn:covariate-adjustment} holds, even when the graph is an unknown.
Indeed, using \cref{eqn:covariate-adjustment} requires quite minimal background knowledge of $\cG$, as we now discuss.
For simplicity, we limit our discussion to a single treatment variable $X$. 
In the case of DAGs, the backdoor criterion implies that $\bZ = \ND(X)$ is a valid adjustment set, where $\ND(X)$ denotes the set of non-descendants of $X$ in $\cG$.
Thus, assuming causal sufficiency, our method can be employed given only knowledge of $\ND(X)$, a quite common setting in applications such as healthcare, where a doctor's treatment assignment $X$ can only depend on pre-treatment patient covariates.
Under causal sufficiency and $\bZ = \ND(X)$, the Markov blanket of $X$ with respect to $\bZ$ is the set $\bS = \Pa(X)$, and our \AMBAshort\ algorithm can be interpreted as searching for the parents of $X$.
Similar results hold in the more general case of ADMGs under an additional assumption on the graph $\cG$, as we discuss in \cref{sec:appendix-ADMG}.

In light of these connections, our results fit into a recent line of work establishing identifiability of causal effects with minimal graphical background knowledge.
\citet{entner2013data} consider a setting that matches ours in the DAG setting with $\bZ = \ND(X)$, and establish a condition similar to \cref{lem:minimal-soundness} to determine whether $\bA \subseteq \VminusXY$ is a valid adjustment set.
While our condition is sound, their condition is both sound \textit{and} complete, but relies on conditional dependence checks instead of only conditional independence checks.
Furthermore, in contrast with our work, where statistical guarantees are a primary focus, their work does not provide any guarantees outside of the oracle setting, though it would be interesting to study their approach in the finite-sample setting.

Follow-up works in this space have extended this problem to the causally insufficient setting by incorporating additional background knowledge on $\cG$; all of the works discussed assume knowledge of $\bZ = \ND(X)$.
For example, \citet{cheng2022toward} assumes knowledge of some variable $A$ that is a ``cause or spouse of treatment only (COSO)" variable, i.e.\ that $A$ is adjacent to $X$ but not to $Y$ in $\cG$, and establishes a sound condition for determining whether $\bS \subseteq \bZ$ is an adjustment set.
Relatedly, \citet{shah2022finding} assumes knowledge of some variable $A$ that is a parent of $X$ and establishes a similar condition.
Both conditions are sound, but not complete; in contrast, we show in \cref{sec:appendix-completness} that the \BAMBAshort\ approach is both sound and complete in the causally sufficient setting when $\bZ = \ND(X)$.
Finally, \citet{shah2023front} goes beyond using the adjustment formula for identification, in particular studying when background knowledge is sufficient to identify the causal effect using \textit{frontdoor} adjustment.

\subsection{Causal graph discovery}
\label{sec:related-work-causal-graph-discovery}

This work is strongly motivated by our recognition of the pressing need for better connections between the areas of causal effect estimation and causal structure learning.
In a typical \textit{causal discovery} (a.k.a.\ \textit{causal structure learning}) task, one takes data on the observed variables $\bV$ as input, and seeks to return a causal graph $\cG$ (or an equivalence class of graphs) that provides an accurate causal model of the system.
Traditionally, this goal is (implicitly or explicitly) motivated by the utility of such a model for generating causal predictions, e.g., predicting $\bbP_\bx(\by)$ as discussed in this work.

\subsubsection{Causal discovery and faithfulness}
The field of causal discovery is quite well-developed, and has been the subject of several surveys, e.g. see \citep{heinze2018causal,glymour2019review,vowels2022d,squires2023review}.
Various approaches address settings such as learning from observational data in the causally sufficient setting \citep{spirtes2000causation,chickering2002optimal,zheng2018dags,solus2021consistency} and in the causally insufficient setting \citep{spirtes2000causation,colombo2012learning}, as well as learning from interventional data, possibly involving actively chosen interventions \citep{eberhardt2005number, eberhardt2006n, eberhardt2007causation, hauser2012characterization, hu2014randomized, shanmugam2015learning, wang2017permutation, kocaoglu2017cost, lindgren2018experimental, greenewald2019sample, jaber2020causal, squires2020active, choo2022verification, choo2023subset, choo2023active, choo2023new, choo2023adaptivity}.
Many of these algorithms enjoy theoretical guarantees in the well-specified setting, i.e.\ under the assumption that the system is correctly described by some (unknown) causal graph $\cG^*$.
In this setting, an algorithm is called \textit{consistent} if it recovers $\cG^*$ (or an appropriate equivalence class) with probability one in the limit of infinite data.

Significant attention has been devoted to finding conditions under which various causal discovery algorithms are consistent.
For example, the well-known \textit{faithfulness} assumption requires that if $\bA$ and $\bB$ and not d-separated by $\bC$ in $\cG^*$, then $\bA \not\ind \bB \mid \bC$ in $\bbP(\bV)$.
Although faithfulness is a sufficient condition for the consistency of many causal discovery algorithms, it is often a stronger condition than necessary, and many weaker conditions have been established, see \citep{lam2023causal} for a recent review and comparison of such conditions.
The search for weaker consistency conditions is motivated by a practical issue: although the consistency of an algorithm may depend only on there being no violations of faithfulness, \textit{near} violations of faithfulness (where the conditional independence $\bA \ind \bB \mid \bC$ nearly holds, e.g. $\Delta_{\bA \ind \bB \mid \bC} \leq \eps$ for some small $\eps$) can significantly affect its finite sample properties.
Therefore, finite sample guarantees for graph recovery \citep{kalisch2007estimating,maathuis2009estimating,gao2020polynomial,wadhwa2021sample,gao2022optimal} often depend on assumptions such as \textit{strong} faithfulness, which may be significantly more restrictive in practice \citep{uhler2013geometry}.

In this work, we avoid making any such assumptions.
Indeed, since our goal is causal effect estimation, rather than graph recovery, faithfulness conditions are unnecessary, and existing sample complexity guarantees for causal discovery are pessimistic for our purposes.
Within the graphical framework, a main message of our work is that accurate causal effect estimation does not require learning the correct causal graph $\cG^*$.
For example, if $\cG^*$ has ``weak" edges, these may be hard to distinguish from missing edges, but those edges are also exactly those that do not significantly impact causal effects; in pragmatic terms, whether an edge is weak or missing is ``a difference that doesn't make a difference".
We provide a concrete example of this phenomenon in \cref{sec:appendix-weak-edges}.
Nonetheless, such conditions may be useful in improving the sample complexity and/or the computational complexity of our approach, as we discuss in \cref{sec:appendix-faithfulness}.

\subsubsection{Cautious approaches and local causal discovery}
To better align theory and practice, a few recent works have focused on new kinds of theoretical guarantees.
Two contemporaneous works \citep{malinsky2024cautious,chang2024post} explicitly consider the interplay between causal discovery and causal effect estimation.
As in our work, \citet{malinsky2024cautious} advocates the use of conditional dependence tests (as opposed to conditional independence tests) to control model misspecification, an approach they call ``cautious" causal discovery, where \citet{chang2024post} advocate a bootstrap-style approach.
However, their guarantees are for the asymptotic setting, rather than the PAC setting considered in this work, and their approaches aim to recover an entire causal graph, unlike our approach.

More closely related to our approach are methods for \textit{local causal discovery}, which aim to recover only part of a causal graph.
Indeed, one of the canonical problems in local discovery is Markov blanket recovery \citep{koller1996toward,frey2003identifying,tsamardinos2003algorithms,ramsey2006pc,pena2007towards,fu2008fast,aliferis2010local,aliferis2010local2,
gao2016efficient,ling2020using,dong2022nonparametric}, potentially combined with partial edge orientation \citep{yin2008partial,wang2014discovering,gao2015local,gupta2023local} and often used in the context of full causal discovery algorithms \citep{mani2004causal,tsamardinos2006max,solus2021consistency,gao2021efficient}.
A number of these algorithms employ greedy search, adding variables to the Markov blanket one at a time, e.g.\ \citep{tsamardinos2003algorithms,fu2008fast,gao2016efficient}.
However, greedy search is not guaranteed to return a correct Markov blanket without additional assumptions, such as those in \citep{gao2021efficient}, in which the authors also provide finite sample guarantees.
In contrast, many non-greedy algorithms do enjoy consistency guarantees (i.e. recovery of a correct Markov blanket in the infinite data limit), but thus far lack finite sample guarantees.

Thus, our finite sample guarantees for the (non-greedy) \AMBAshort\ algorithm contribute to this important line of work, and may be of independent interest beyond the context of causal effect estimation.
Furthermore, our \BAMBAshort\ highlights that using only local structure may be suboptimal for some estimation problems.
This fact suggests that we extend from local causal discovery to the more general problem of \textit{targeted} causal discovery, i.e., causal discovery tailored to specific estimation problems, analogous to techniques such as targeted maximum likelihood estimation \citep{van2006targeted,schuler2017targeted}.

\subsection{Causal effect estimation via covariate adjustment}
\label{sec:related-work-feature-selection}

Now, we relate our results to existing statistical results on causal effect estimation, focusing on estimation using the adjustment formula.
Existing results are largely written in terms of potential outcomes but, as with our result, are usually applicable as long as \cref{eqn:covariate-adjustment} holds and are thus independent of framework choice.\footnotemark
\footnotetext{
    When the random variables are continuous or mixed, \cref{eqn:covariate-adjustment} is written as $T_{\bs,\bx,\by} = \bbE_\bS [ \bbP(\bY = \by \mid \bX = \bx, \bS) ]$.
}
In many domains such as healthcare and econometrics, \cref{eqn:covariate-adjustment} can be justified by domain knowledge.
For example, in healthcare, where $\bX$ and $\bY$ may represent medical treatments and patient outcomes, respectively, it is sufficient for $\bZ$ to contain all information that doctors may be using to assign treatment, e.g.\ patient demographic information and past medical history.
In such domains, $\bZ$ are often referred to as a set of \textit{covariates}; we adopt this terminology here.

As datasets become larger and richer, causal effect estimation is increasingly being applied to problems with high-dimensional covariates.
These problems present novel challenges, including violations of the overlap assumption \citep{d2021overlap} and the breakdown of traditional asymptotic results.
Dimensionality reduction techniques such as feature selection are often crucial to addressing the challenges.
However, in the context of treatment effect estimation, na{\"i}ve usage of feature selection methods such as the Lasso can introduce substantial misspecification bias.
Several works aim to address this issue; here, we focus on methods based on feature selection, pointing readers to \citet{yadlowsky2022bounds} as a starting point for methods using other forms of dimensionality reduction, and to \citet{witte2019covariate} and \citet{yu2020causality} for a more complete review and comparison of methods based on feature selection.

Whereas our work focuses on discrete covariates, with no additional assumptions on $\bbP(\bZ)$, $\bbP(\bX \mid \bZ)$ and $\bbP(\bY \mid \bX, \bZ)$, the majority of prior works consider \textit{continuous} covariates $\bZ$, and thus require additional assumptions, such as parametric or smoothness assumptions.
When $X$ is a binary treatment, a common assumption is that $\bbP(X \mid \bZ)$ follows a logit model, so that $\bbP(X \mid \bZ)$ is parameterized by a vector $\bbeta \in \bbR^{|\bZ|}$.
Similarly, when $Y$ is a scalar outcome, a common assumption is that $\bbP(Y \mid X, \bZ)$ follows a linear model, i.e.\ it is parameterized by a vector $\bgamma \in \bbR^{|\bZ|}$.
Sparsity assumptions may be imposed on one or both of $\bbeta$ and $\bgamma$; for example, \citet{shortreed2017outcome} and \citet{wang2020debiased} assume sparsity on $\bbeta$, \citet{bradic2019sparsity} and \citet{athey2018approximate} assume sparsity on $\bgamma$, and \citet{greenewald2021high} assumes sparsity on both.
Other common assumptions include semiparametric restrictions, e.g.\ partially linear models \citep{belloni2014inference,chernozhukov2018double}, and smoothness assumptions \citep{farrell2021deep}.

In these works, sparse regression methods (e.g.\ Lasso and its variants) play a role similar to our search for a smaller adjustment set $\bS \subseteq \bZ$, and the choice of regularization parameter plays a role similar to our choice of $\eps$ in balancing between misspecification bias and estimation error.
In comparison to these methods, our focus on discrete variables obviates the need for additional assumptions, and allows us to establish deeper connections between causal effect estimation and fields such as distribution testing \citep{canonne2020survey} and property estimation \citep{charikar2019general}.
These connections make the problem accessible to a wider audience and provide access to a broader range of tools: in particular, we note that most of these prior results (e.g. \citet{shortreed2017outcome}, \citet{athey2018approximate}, \citet{bradic2019sparsity}, \citet{wang2020debiased}, \citet{belloni2014inference}, \citet{chernozhukov2018double}, and \citet{farrell2021deep}) are of an asymptotic nature, with \citet{greenewald2021high} being a key exception.

\section{Additional results}

\subsection{Derivation of expectation bound}
\label{sec:appendix-ZBHK24-derivation}

Here, we translate the result of \cite{zeng2024causal} into our language, showing that $\cO \left( \frac{|\bSigma_\bZ|}{\lambda \alpha_\bZ} + \frac{1}{\lambda^2 \alpha_\bZ} \right)$ samples suffice to obtain an expectation bound of $\bbE \left( \left| T_{\bZ,\bx,\by} - \hatT_{\bZ,\bx,\by} \right| \right) \leq \lambda$, for $T_{\bZ,\bx,\by}$ defined as in \cref{eqn:covariate-adjustment}.

\cite{zeng2024causal} studies the setting where one is given $n$ i.i.d.\ copies of $(Y, X, A)$ where $Y \in \{0,1\}$ is the binary outcome, $X \in \{0,1\}$ is the binary treatment, and $A \in [d] = \{1, \ldots, d\}$ is a multivariate covariate.
Under their positivity assumption \cite[Assumption 2]{zeng2024causal}, $\bbP(X = 1 \mid A = k) \in [\eps, 1 - \eps]$ holds for some constant $\eps \in (0, 1/2)$ and any $k \in [d]$.
Then, for $\psi_1 = \sum_{k=1}^d \bbP(A = k) \cdot \bbP(Y = 1 \mid X = 1, A = k)$ and plug-in estimator $\wh{\psi_1}$, Theorem 1 of \cite{zeng2024causal} states that $\bbE [\psi_1 - \wh{\psi_1} ] \leq \frac{|\bSigma_\bZ|^2}{\alpha_\bZ^2 n^2} + \frac{C}{\alpha_\bZ n}$ when $\wh{\psi_1}$ is computed using $n$ i.i.d. samples from $\bbP(Y, X, A)$, for the worst case distribution $\bbP(Y, X, A)$ satisfying their positivity assumption.

To adapt their result to our setting, let us define $Y' = \kron_{\bY = \by}$, $X' = \kron_{\bX = \bx}$, and $A'$ as a flattened version of $\bZ$.
Relating $(Y', X', A')$ to their $(Y, X, A)$ setup, we see that $\psi_1 = T_{\bZ,\bx,\by}$, $d = |\bSigma_{\bZ}|$, and $\alpha_{\bZ} = \eps$.
So,
\begin{equation}
\label{eq:ZBHK24-theorem1}
\bbE \left[ \left( T_{\bZ,\bx,\by} - \hatT_{\bZ,\bx,\by} \right)^2 \right]
\leq \frac{|\bSigma_\bZ|^2}{\alpha_\bZ^2 n^2} + \frac{C}{\alpha_\bZ n},    
\end{equation}
for some absolute constant $C > 0$, where we have replaced $\eps$ by $\alpha_\bZ$, $d$ by $|\bSigma_\bZ|$, and used that $(1 - \alpha_\bZ)^2 \leq 1$.

To translate this bound into our desired form, we first apply Jensen's inequality \cite{jensen1906fonctions}:
\begin{align*}
\left( \bbE \left[ \left| T_{\bZ,\bx,\by} - \hatT_{\bZ,\bx,\by} \right| \right] \right)^2
&\leq \bbE \left[ \left( \left| T_{\bZ,\bx,\by} - \hatT_{\bZ,\bx,\by} \right| \right)^2 \right] \tag{Jensen's inequality}\\
&= \bbE \left[ \left( T_{\bZ,\bx,\by} - \hatT_{\bZ,\bx,\by} \right)^2 \right]\\
&\leq 2 \max \left( \frac{|\bSigma_\bZ|^2}{\alpha_\bZ^2 n^2}, \frac{C}{\alpha_\bZ n} \right) \tag{By \cref{eq:ZBHK24-theorem1}}
\end{align*}
Thus, to obtain that $\bbE \left( \left| T_{\bZ,\bx,\by} - \hatT_{\bZ,\bx,\by} \right| \right) \leq \lambda$, it suffices to have
$
2 \max \left( \frac{|\bSigma_\bZ|^2}{\alpha_\bZ^2 n^2}, \frac{C}{\alpha_\bZ n} \right)
\leq
\lambda^2
$
.
Then, solving for $n$ yields
$
n \in \cO \left( \frac{|\bSigma_\bZ|}{\lambda \alpha_\bZ} + \frac{1}{\lambda^2 \alpha_\bZ} \right)
$
as stated.

\subsection{Derivation for conditional sub-Gaussian}
\label{sec:appendix-subgaussian-take-max}

For completeness, we present the following proof of \cref{lem:subgaussian-take-max}.

\begin{proof}
    By iterated expectation,
    \begin{align*}
        \bbE \left( e^{\lambda X} \right)
        &=
        \bbE \left( \bbE \left( e^{\lambda X} \mid Y \right) \right)
        \\
        &\leq
        \bbE \left( \exp \left( \frac{\lambda^2 \sigma_Y^2}{2} \right) \right)
        \\
        &\leq
        \bbE \left( \exp \left( \frac{\lambda^2 \max_{y \in \Sigma_Y} \sigma_y^2}{2} \right) \right)
        \\
        &\leq
        \exp \left( \frac{\lambda^2 \max_{y \in \Sigma_Y} \sigma_y^2}{2} \right),
    \end{align*}
    i.e., $X \in \subG(\max_{y \in \Sigma_Y} \sigma_y^2)$, as desired.
\end{proof}

% \dc{Davin to check}

\subsection{Additional results in the graphical framework}

In \cref{sec:related-work}, we described a special case of our setting in the graphical framework. In particular, assuming that $\cG$ is a DAG and considering only a single treatment variable $X$, it is easy to see that $\bZ = \ND(X)$ is a valid adjustment set, and that $\bS = \Pa(X)$ is a Markov blanket of $X$ with respect to $\bZ$.
We now discuss two other applications of our results.
First, in \cref{sec:appendix-completness}, we show that \BAMBAshort\ is complete in this special case, i.e.\ the two conditional independences in \cref{lem:minimal-soundness} are not just sufficient to ensure that $\bS'$ is an adjustment set, they are also \textit{necessary}.
Second, in \cref{sec:appendix-ADMG} we introduce an assumption under which we can extend the special case of $\bZ = \ND(X)$ from DAGs to ADMGs, and describe the Markov blanket blanket of $X$ with respect to $\bZ$.

We assume that the interested reader is familiar with definitions related to d-separation (in DAGs) and m-separation (in ADMGs); e.g.\ concepts such as colliders, active paths, and blocked paths; see e.g. \cite{richardson2003markov} for a detailed overview.
In particular, we will make use of the following Markov property for DAGs:
\begin{definition}\label{def:markov}
    A probability distribution $\bbP(\bV)$ is said to be \emph{Markov} with respect to a DAG $\cG$ if, whenever $\bA$ and $\bB$ are d-separated by $\bC$ in $\cG$, then $\bA$ is conditionally independent from $\bB$ given $\bC$ in $\bbP(\bV)$.
\end{definition}

\subsection{Completeness of \BAMBAshort\ for a special case}
\label{sec:appendix-completness}

In this section, we show that \BAMBAshort\ is not just sound (\cref{lem:minimal-soundness}) but also complete in a special setting.
In particular, \cref{lem:minimal-completeness} implies that searching for a minimal sized $\bS' \subseteq \bZ$ that satisfies both $\bY \ind \bS \setminus \bS' \mid \bX \cup \bS'$ and $\bX \ind \bS' \setminus \bS \mid \bS$ necessarily produces a minimal sized adjustment set.

\begin{mylemma}
\label{lem:minimal-completeness}
Consider the graphical causal framework in the causally sufficient setting, where variables in $\bX$ are non-ancestors of each other.
Let $\bZ = \ND(\bX) \subseteq \VminusXY$ be the set of non-descendants of $\bX$ and $\bS = \Pa(\bX) = \bigcup_{X \in \bX} \Pa(X) \subseteq \ND(\bX) = \bZ$ are the parents of $\bX$.
Then, any subset $\bS' \subseteq \ND(\bX) = \bZ$ such that $T_{\bS', \bx, \by} = T_{\bS, \bx, \by}$ must satisfy both (i) $\bY \ind \bS \setminus \bS' \mid \bX \cup \bS'$ and (ii) $\bX \ind \bS' \setminus \bS \mid \bS$.
\end{mylemma}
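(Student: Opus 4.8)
The plan is to read the hypothesis $T_{\bS',\bx,\by}=T_{\bS,\bx,\by}$ correctly, dispatch (ii) unconditionally, and reduce (i) to the completeness of known graphical adjustment criteria together with a path-surgery argument. First the hypothesis: in the causally sufficient setting with the members of $\bX$ non-ancestors of one another, the classical back-door argument gives $T_{\bS,\bx,\by}=\bbP_\bx(\by)$ for every distribution Markov with respect to $\cG$ (note $\bS=\Pa(\bX)\subseteq\ND(\bX)$, so $\bS$ contains no descendant of $\bX$ and blocks every back-door path). Hence $T_{\bS',\bx,\by}=T_{\bS,\bx,\by}$ says precisely that $\bS'$ is a valid adjustment set for $(\bX,\bY)$. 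By the completeness of the generalized adjustment criterion for DAGs \citep{shpitser2010validity,perkovic2018complete}, $\bS'$ must (a) be disjoint from the forbidden set $\mathrm{forb}(\bX,\bY)$ and (b) block every proper non-causal path from $\bX$ to $\bY$ in $\cG$. Since $\bS'\subseteq\ND(\bX)$ while $\mathrm{forb}(\bX,\bY)\subseteq\De(\bX)$, condition (a) is automatic, so all the content lies in (b).

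For part (ii), I will show it holds for \emph{every} $\bS'\subseteq\ND(\bX)$, independent of validity. Because $\bS'\setminus\bS\subseteq\ND(\bX)\setminus\Pa(\bX)$ and $\bS=\Pa(\bX)$, it suffices (by the decomposition property of $\perp_\cG$ and the Markov property of $\bbP(\bV)$, \cref{def:markov}) to prove the d-separation $\bX\perp_\cG\ND(\bX)\setminus\Pa(\bX)\mid\Pa(\bX)$. Take any path from $X\in\bX$ to $W\in\ND(\bX)\setminus\Pa(\bX)$ and look at its first edge at $X$. If it is $X\leftarrow P$, then $P\in\Pa(\bX)$ is a conditioned non-collider and the path is blocked. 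If it is $X\to C$, follow the maximal initial run of forward edges $X\to C_1\to\cdots\to C_m$; each $C_i\in\De(\bX)$, so $C_i\notin\Pa(\bX)$, and the run cannot terminate at $W$ (a non-descendant of $\bX$), so it ends at a collider $C_m$ all of whose descendants lie in $\De(\bX)$ and hence outside $\Pa(\bX)$, making $C_m$ an unconditioned collider that blocks the path. Here one uses that $\bX$ is disjoint from $\Pa(\bX)$ and that $\De(\bX)\cap\ND(\bX)=\emptyset$.

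For part (i) I argue the contrapositive. Suppose $\bY\not\perp_\cG\Pa(\bX)\setminus\bS'\mid\bX\cup\bS'$ and fix a shortest active path $\pi$ from some $Y\in\bY$ to some $T\in\Pa(\bX)\setminus\bS'$ given $\bX\cup\bS'$, together with an $X\in\bX$ with $T\to X$; note that no $\bX$-node or $\bS'$-node can lie on $\pi$ as a non-collider. Prepending the edge $T\to X$ to the reverse of $\pi$ and truncating at the last $\bX$-node of the resulting walk yields a \emph{proper} path $W^*$ from some $X^*\in\bX$ to $Y$ whose first edge points into $X^*$; thus $W^*$ is a proper non-causal path from $\bX$ to $\bY$. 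Its non-colliders all avoid $\bS'$ ($T$ is not a collider, since $T\to X$ is outgoing from $T$; the remaining interior non-colliders are non-colliders of $\pi$; and the endpoints $X^*,Y$ avoid $\bS'$), and every collider of $W^*$ is a collider of $\pi$, hence activated via some descendant in $\bX\cup\bS'$. If every such collider has a descendant in $\bS'$, then $W^*$ is active given $\bS'$, contradicting (b), and we are done. The remaining, and main, difficulty is a collider $C$ of $W^*$ activated only through a descendant in $\bX$: such a $C$ is not in $\bX$ (colliders of $W^*$ are interior and $W^*$ is proper) but is an ancestor of $\bX$, hence — by non-ancestrality of $\bX$ — a non-descendant of $\bX$, and it reaches some $X''\in\bX$ along a directed path $\mu$ of descendants of $\bX$. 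One reroutes $W^*$ by replacing its $X^*$-side portion up to $C$ with $\mu$ traversed from $X''$ to $C$: the internal nodes of $\mu$ are descendants of $\bX$, hence non-colliders outside $\bS'$, and $C$ becomes a non-collider. Iterating this over all problematic colliders and taking a shortest walk among those produced — the standard but delicate d-connection bookkeeping that guarantees one ends with a genuine, repetition-free path — produces a proper non-causal path from $\bX$ to $\bY$ active given $\bS'$, again contradicting (b). This rerouting step is the crux of the argument; everything else is routine.
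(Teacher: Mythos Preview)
Your overall approach matches the paper's: deduce from $T_{\bS',\bx,\by}=T_{\bS,\bx,\by}$ that $\bS'$ satisfies the adjustment criterion (via completeness), then handle (ii) and (i) by path arguments. For (ii), your argument and the paper's are essentially identical --- both show the d-separation $\bX\perp_\cG \ND(\bX)\setminus\Pa(\bX)\mid\Pa(\bX)$ and, as you note explicitly, this does not use validity of $\bS'$. For (i), the paper proceeds much as you do in constructing $W^*$: it splits on whether the active path from $\bY$ to $A\in\bS\setminus\bS'$ meets $\bX$, extracts a proper non-causal $\bX$-to-$\bY$ path $\bQ_{Y,X}$, and invokes that $\bS'$ must block it. The paper then simply asserts that this forces $\bX\cup\bS'$ to block the original path, without separately treating the case you flag --- a collider $C$ on $\bQ_{Y,X}$ that is unactivated by $\bS'$ but is activated by $\bX\cup\bS'$ via a descendant in $\bX$. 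Your rerouting argument makes this step explicit, so in that respect your proof is more careful than the paper's.

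One slip to fix: you claim the internal nodes of the directed path $\mu$ from $C$ to $X''$ are ``descendants of $\bX$''. They are not --- they are proper ancestors of $X''$, and by the mutual non-ancestrality of $\bX$ they lie in $\ND(\bX)$. The correct reason they avoid $\bS'$ is that they are \emph{descendants of $C$}, and by hypothesis $C$ has no descendant in $\bS'$. With that correction your rerouting step is sound; in fact, choosing $C$ to be the problematic collider closest to $Y$ lets a single rerouting suffice, sidestepping the iterative walk-to-path cleanup you flag as delicate.
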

\begin{proof}
We know that $\bS = \Pa(\bX)$ is a valid adjustment set and so it must block any non-causal paths between $\bX$ and $\bY$ \citep{perkovic2018complete}.
Then, since $T_{\bS', \bx, \by} = T_{\bS, \bx, \by}$, it must be the case that $\bS'$ is also a valid adjustment set.

\paragraph{Condition (i)}: $\bY \ind \bS \setminus \bS' \mid \bX \cup \bS'$

Suppose, for a contradiction, that $\bY \not\ind \bS \setminus \bS' \mid \bX \cup \bS'$.
By contrapositive of the Markov property (\cref{def:markov}), there is an active d-connected path in $\cG$ from some $Y \in \bY$ to some $A \in \bS \setminus \bS'$, when $\bX \cup \bS'$ is conditioned upon.
Let $\bP_{Y,A}$ denote such an active path of minimal length.
By minimality of $\bP_{Y,A}$, there are no internal vertices from $\bS \setminus \bS'$ within the path $\bP_{Y,A}$.
We will argue that such a path $\bP_{Y,A}$ \emph{cannot} exist by considering the two cases of whether the path $\bP_{Y,A}$ contains some vertex from $\bX$ internally.
\\

\emph{Case 1}: Suppose $\bP_{Y,A}$ contains some vertex from $\bX$, i.e.\ $\bV(\bP_{Y,A}) \cap \bX \neq \emptyset$.
Let $X \in \bX$ be the vertex in $\bV(\bP_{Y,A}) \cap \bX$ that is closest to $Y$, i.e.\ there are no other vertices between $X$ and $Y$ along the path $\bP_{Y,A}$.
Let $\bQ_{Y,X}$ denote this subpath of $\bP_{Y,A}$.
Since $\bP_{Y,A}$ is active with respect to $\bX \cup \bS'$, $X$ must appear as a collider on $\bP_{Y,X}$.
That is, $\bQ_{Y,X}$ is a non-causal path from $X$ to $Y$ that does not contain any internal $\bX$ vertices.
\\

\emph{Case 2}: Suppose $\bP_{Y,A}$ does \emph{not} contain any vertex from $\bX$, i.e.\ $\bV(\bP_{Y,A}) \cap \bX = \emptyset$.
Since $A \in \bS \setminus \bS' \subseteq \bS = \Pa(\bX)$, there must be an edge $A \to X$ for some $X \in \bX$.
Therefore, the extended path $\bQ_{Y,X} = \bP_{Y,A} \cup \{A \to X\}$ is a non-causal path from $X$ to $Y$ that does not contain any internal $\bX$ vertices.
\\

In either case, we have some non-causal path from $X$ to $Y$ that does not contain any internal $\bX$ vertices denoted by $\bQ_{Y,X}$.
Since $\bS'$ is a valid adjustment set, $\bS'$ must block $\bQ_{Y,X}$, which implies that $\bP_{Y,A}$ will be blocked by $\bX \cup \bS'$.
This is a contradiction to the existence of such an active path $\bP_{Y,A}$ in the first place.

\paragraph{Condition (ii)}: $\bX \ind \bS' \setminus \bS \mid \bS$

Suppose, for a contradiction, that $\bX \not\ind \bS' \setminus \bS \mid \bS$.
By contrapositive of the Markov property (\cref{def:markov}), there is an active d-connected path from some $X \in \bX$ to some $B \in \bS' \setminus \bS$, when $\bS$ is being conditioned upon.
Let $\bP_{X,B}$ denote such an active path.
Note that $\bP_{X,B}$ \emph{cannot} begin with an incoming edge into $X$.
This is because otherwise $\bP_{X,B}$ has the form $X \gets C - \ldots$ for some $C \in \Pa(\bX) = \bS$ and so would be not be active when $\bS$ is being conditioned upon.
So, it must be the case that $\bP_{X,B}$ begins with an outgoing edge from $X$.
Then, there must be a collider on $\bP_{X,B}$ involving a descendant of $X$ because $B \in \bS' \setminus \bS \subseteq \ND(\bX)$.
However, the conditioning set $\bS \subseteq \ND(\bX)$ would not include this descendant, so $\bP_{X,B}$ would not be active.
This contradicts the existence of such an active path $\bP_{X,B}$ in the first place.
\end{proof}

\subsection{Valid adjustment by non-descendants in ADMGs}
\label{sec:appendix-ADMG}

Consider an ADMG with single treatment variable $X$ and single outcome variable $Y$, and let $\bZ = \ND(X)$.
Unfortunately, $\bZ$ is not necessarily a valid adjustment set for $\bbP_\bx(\by)$, see \cref{fig:nd-not-valid} for a counterexample.
To ensure that $\bZ$ is a valid adjustment set for $\bbP_\bx(\by)$, we must introduce an additional assumption on the causal graph $\cG$.
In particular, let $\Dis(X)$ denote the \textit{district} (a.k.a.\ \textit{c-component}) of $X$, i.e.\ the set of all nodes in $\cG$ that are connected to $X$ by only bidirected edges.
Following Definition 17 of \cite{richardson2023nested}, we say that $X$ is \textit{fixable} if $\Dis(X) \cap \De(X) = \{ X \}$, i.e.\ if the district of $X$ does not contain any of its descendants.
Further, we say that a path from $X$ to $Y$ is \textit{causal} if it is of the form $X \to \ldots \to Y$, i.e.\ if it contains only directed edges pointing away from $X$; otherwise, we call a path \textit{non-causal}.
Then, we have the following:

\begin{myassumption}\label{assumption:fixable}
    Assume that $X$ is fixable in $\cG$ (i.e.\ $\Dis(X) \cap \De(X) = \varnothing$) and $Y \not\in \Dis(X)$.
\end{myassumption}

\begin{mylemma}
    Under \cref{assumption:fixable}, $\bZ = \ND(X)$ is a valid adjustment set for $\bbP_\bx(\by)$.
\end{mylemma}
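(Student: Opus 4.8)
The plan is to establish the adjustment formula \cref{eqn:covariate-adjustment} for $\bZ = \ND(X)$, i.e.\ $\bbP_x(y) = T_{\bZ,x,y}$, by way of the \emph{fixing} operation of \cite{richardson2023nested}. Since \cref{assumption:fixable} asserts that $X$ is fixable, the single-node interventional law $\bbP_x(\bV \setminus \{X\})$ is identified, and fixing $X$ in $\cG$ gives
\[
\bbP_x(\bV \setminus \{X\}) \;=\; \frac{\bbP(\bV)}{\bbP(X \mid \ND(X))},
\]
where the overlap/positivity condition keeps the denominator positive where needed and the denominator is a function of the value of $\bZ = \ND(X)$ alone. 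First I would record this identity, noting that in the generic situation $Y \in \De(X)$ (so that $\bZ \subseteq \bV \setminus \{X,Y\}$, as required of an adjustment set) the set $\bV \setminus \{X, Y\}$ is the disjoint union of $\bZ$ and $\De(X) \setminus \{X, Y\}$.

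Then I would marginalize down to $Y$. Summing the identity above over the values of $\De(X) \setminus \{X, Y\}$ collapses the numerator to $\bbP(x, y, \bz)$, leaving
\[
\bbP_x(y) \;=\; \sum_{\bz} \frac{\bbP(x, y, \bz)}{\bbP(x \mid \bz)} \;=\; \sum_{\bz} \bbP(y \mid x, \bz)\,\bbP(\bz) \;=\; T_{\bZ, x, y},
\]
which is precisely \cref{eqn:covariate-adjustment}, so $\bZ$ is a valid adjustment set. In the residual (degenerate) case $Y \notin \De(X)$ one instead uses the second half of \cref{assumption:fixable}: $Y \notin \Dis(X)$ forces $X$ and $Y$ to be m-separated given $\bZ = \ND(X)$, so $\bbP(y \mid x, \bz) = \bbP(y \mid \bz)$ and both $\bbP_x(y)$ and $T_{\bZ,x,y}$ collapse to $\bbP(y)$. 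This is exactly where $Y \notin \Dis(X)$ is needed, since fixability by itself only forbids $Y$ from lying in $\Dis(X) \cap \De(X)$.

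The main obstacle I anticipate is justifying the displayed identity $\bbP_x(\bV\setminus\{X\}) = \bbP(\bV)/\bbP(X\mid\ND(X))$ carefully, i.e.\ invoking both that fixing a fixable vertex realizes the corresponding intervention and that the fixing operator divides by the conditional law of $X$ given its non-descendants. If a direct citation is judged too terse, I would instead give a self-contained argument via the canonical DAG $\cG'$ of $\cG$ (replace each $V_i \leftrightarrow V_j$ with $V_i \leftarrow U_{ij} \to V_j$ for a fresh latent $U_{ij}$): then $\bbP(\bV)$ and $\bbP_x(\bV\setminus\{X\})$ are the latent-marginals of the corresponding laws of $\cG'$, and it suffices to verify the ordinary backdoor criterion (\cite{pearl2009causal}) for $\bZ = \ND_\cG(X)$ in $\cG'$. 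Part (i) — no element of $\bZ$ is a descendant of $X$ — is immediate. Part (ii) — $\bZ$ blocks every back-door path from $X$ to $Y$ in $\cG'$ — is the delicate step: such a path either begins at a genuine parent of $X$, which lies in $\bZ$ and is a non-collider and thus blocks it, or it enters $\Dis_\cG(X)$, and then fixability forces every district vertex it meets into $\ND_\cG(X) = \bZ$, so tracing the path shows one of them must occur as a non-collider before the path can escape $\Dis_\cG(X)$ toward $Y$ (here $Y \notin \Dis_\cG(X)$ guarantees the path cannot simply terminate inside the district). Soundness of backdoor adjustment for DAGs then yields $\bbP_x(y) = T_{\bZ,x,y}$.
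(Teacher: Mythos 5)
Your proposal is correct, but your primary route is genuinely different from the paper's. The paper proves the lemma by directly verifying the adjustment criterion of \cite{shpitser2010validity} in the ADMG: condition (i) is immediate since $\ND(X)$ contains no descendants of $X$, and condition (ii) is argued by locating the first non-collider $A$ on a non-causal path, showing $A \in \Dis(X) \cup \Pa(\Dis(X))$, and using fixability to conclude $A \in \ND(X)$. Your main argument instead computes the interventional law outright via the fixing operation, $\bbP_x(\bV \setminus \{X\}) = \bbP(\bV)/\bbP(X \mid \ND(X))$, and marginalizes; this is precisely the ``more direct'' proof via \cite{richardson2023nested} that the paper explicitly mentions it is avoiding in order to minimize background. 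What your route buys is an explicit identification formula with no path-tracing; what it costs is the (glossed) justification that the fixing operator's division by $\bbP(X \mid \mathrm{mb}(X))$, where $\mathrm{mb}(X) = \Dis(X) \cup \Pa(\Dis(X)) \setminus \{X\}$, coincides with division by $\bbP(X \mid \ND(X))$ --- that equality is exactly the ordered local Markov property applied to the ancestral set $\ND(X) \cup \{X\}$, which is the content of the paper's \emph{next} lemma, so you should cite it rather than treat the two conditionals as interchangeable by definition. Your fallback argument (canonicalize to a DAG and check Pearl's backdoor criterion) is closer in spirit to the paper's proof and also sound: both hinge on the same key observation that fixability forces every vertex of $\Dis(X) \setminus \{X\}$ and hence every escape point of the path into $\ND(X)$, and both correctly use $Y \notin \Dis(X)$ to rule out the path terminating inside the district; the step ``one of them must occur as a non-collider before the path escapes'' deserves the extra sentence distinguishing exits via $D_j \to W$ (where $D_j$ itself is a conditioned non-collider) from exits via $D_j \leftarrow W$ (where $W \in \Pa(\Dis(X)) \subseteq \ND(X)$ is the conditioned non-collider). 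Your separate treatment of $Y \notin \De(X)$ is harmless but vacuous, since $\bZ = \ND(X)$ being a candidate adjustment set already presupposes $Y \notin \ND(X)$.
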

\begin{proof}
    \cite{shpitser2010validity} showed that a set $\bZ$ is a valid adjustment set for $\bbP_\bx(\by)$ if it satisfies the following \textit{adjustment criterion}:
    \begin{itemize}
        \item[(i)] No $Z \in \bZ$ is a descendant in $\cG_{\overline{\bX}}$ of any $W$ on a causal path from $X$ to $Y$.

        \item [(ii)] All non-causal paths from $X$ to $Y$ are blocked by $\bZ$.
    \end{itemize}

    Any $Z$ violating Condition (i) must be a descendant of $X$; thus, Condition (i) is immediately satisfied for $\bZ = \ND(X)$.

    To see (ii), for any non-causal path from $X$ to $Y$, let $A$ be the node closest to $X$ on that path which is not a collider; such a node must exist since we assume that $Y \not\in \Dis(X)$.
    Then either $A \in \Dis(X)$ or $A \in \Pa(\Dis(X))$.
    Since $X$ is fixable, both of these cases imply that $A \in \ND(X)$. Thus, any non-causal path is blocked by $\bZ = \ND(X)$.
\end{proof}

\begin{figure}[htb]
    \centering
    \begin{tikzpicture}
        % \node[] at (0,1) {$\cG_1$};
        \node[draw, thick, circle] at (-1,-1) (x) {$X$};
        \node[draw, thick, circle] at (0,0) (a) {$A$};
        \node[draw, thick, circle] at (1,-1) (b) {$B$};
        \node[draw, thick, circle] at (2.5,-1) (y) {$Y$};
        \draw[thick, Stealth-Stealth] (a) -- (x);
        \draw[thick, Stealth-Stealth] (a) -- (b);
        \draw[thick, -Stealth] (x) -- (b);
        \draw[thick, -Stealth] (b) -- (y);
    \end{tikzpicture}
    \caption{An ADMG in which $\ND(X) = \{ A \}$ is not a valid adjustment set for $\bbP_\bx(\by)$. In particular, when conditioning on $A$, there is an m-connecting non-causal path $X \leftrightarrow A \leftrightarrow B \to Y$. Note that $\Dis(X) \cap \De(X) = \{ B \}$, i.e., $X$ is not a fixable node and thus does not satisfy \cref{assumption:fixable}.}
    \label{fig:nd-not-valid}
\end{figure}
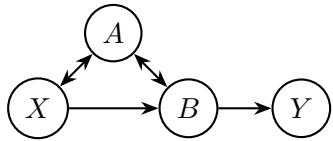

% \begin{tikzpicture}
% % \node[] at (0,1) {$\cG_1$};
% \node[] at (-2,-0.5) {$\cG_1$};
% \node[draw, thick, circle] at (0,0) (z1) {$Z$};
% \node[draw, thick, circle] at (-1,-1) (x1) {$X$};
% \node[draw, thick, circle] at (1,-1) (y1) {$Y$};
% \draw[thick, -Stealth] (z1) -- (x1);
% \draw[thick, -Stealth] (z1) -- (y1);
% \draw[thick, -Stealth] (x1) -- (y1);

% % \draw[thick, dashed] (2,1.5) -- (2,-1.5);
% \draw[thick, dashed] (2,0.5) -- (2,-1.5);

% % \node[] at (4,1) {$\cG_2$};
% \node[] at (6,-0.5) {$\cG_2$};
% \node[draw, thick, circle] at (4,0) (z2) {$Z$};
% \node[draw, thick, circle] at (3,-1) (x2) {$X$};
% \node[draw, thick, circle] at (5,-1) (y2) {$Y$};
% \draw[thick, -Stealth] (z2) -- (y2);
% \draw[thick, -Stealth] (x2) -- (y2);
% \end{tikzpicture}

Note that this result can also be obtained more directly using the conditional ADMG framework of \cite{richardson2023nested}; we have chosen to give this slightly longer proof to minimize the required background.

This result lets us extend the first part of our interpretation from DAGs to ADMGs: under \cref{assumption:fixable}, $\bZ = \ND(X)$ is a valid adjustment set for $\bbP_\bx(\by)$, and hence a valid starting point for our algorithm.
To generalize the second part of our interpretation, we note that $\ND(X)$ is an \textit{ancestral set}, i.e.\ for any $Z \in \ND(\bX)$, $\An(Z) \subseteq \ND(X)$.
From this fact, we rather directly have the following:

\begin{mylemma}
    Let $\bZ = \ND(X)$ and $\bS = \Pa(\Dis(X)) \cup \Dis(X) \setminus \{ X \}$.
    Under \cref{assumption:fixable}, $\bS \subseteq \bZ$ and $\bS$ is a Markov blanket of $X$ with respect to $\bZ$.
\end{mylemma}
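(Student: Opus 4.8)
\emph{Proof plan.} Two things must be shown: (a) the containment $\bS \subseteq \bZ = \ND(X)$, and (b) the conditional independence $X \ind \bZ \setminus \bS \mid \bS$ in $\bbP(\bV)$; together these are exactly the statement that $\bS$ is a Markov blanket of $X$ with respect to $\bZ$ (see \cref{def:markov-blanket}). Here $\bS = (\Pa(\Dis(X)) \cup \Dis(X)) \setminus \{X\}$ and recall $X \in \Dis(X)$ by definition.

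For (a), I would handle the two pieces separately. The piece $\Dis(X) \setminus \{X\}$ lies in $\ND(X)$ directly by \cref{assumption:fixable}, since fixability says $\Dis(X) \cap \De(X) = \{X\}$. For the piece $\Pa(\Dis(X)) \setminus \{X\}$, suppose toward a contradiction that some $W \neq X$ with $W \in \Pa(\Dis(X))$ is a descendant of $X$, and pick $V \in \Dis(X)$ with $W \to V$. Splicing a directed path $X \to \cdots \to W$ with the edge $W \to V$ gives $V \in \De(X)$, so $V \in \Dis(X) \cap \De(X) = \{X\}$; thus $V = X$ and $W \to X$, which together with $X \to \cdots \to W$ is a directed cycle, contradicting acyclicity of $\cG$. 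Hence $\bS \subseteq \ND(X)$.

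For (b), the plan is to use the soundness direction of the global Markov property for ADMGs (m-separation in $\cG$ implies conditional independence in $\bbP(\bV)$) and the fact that separation of sets reduces to pairwise separation: it suffices to fix an arbitrary $W \in \bZ \setminus \bS$ and show that $X$ and $W$ are m-separated given $\bS$ in $\cG$. Since $W \in \ND(X) \setminus \bS$ and $W \neq X$, we have $W \notin \De(X)$, $W \notin \Dis(X)$, and $W \notin \Pa(\Dis(X))$. Suppose for contradiction there is a path $p$ from $X$ to $W$ active given $\bS$; trace $p$ from its $X$-endpoint, using the key observation that no collider on $p$ can be a descendant of $X$ (all descendants of such a collider lie in $\De(X)$, which is disjoint from $\bS \subseteq \ND(X)$, so it could never be opened by $\bS$). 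Case on the first edge: if it is $X \to V_1$, the observation forces every internal vertex of $p$ to be a non-collider entered by an arrowhead, hence left by a tail, so $p$ is a directed path $X \to \cdots \to W$ and $W \in \De(X)$ --- a contradiction; if it is $X \leftarrow V_1$, then $V_1 \in \Pa(X) \subseteq \Pa(\Dis(X))$, so $V_1 \in \bS$ (as $V_1 \neq X$), and $V_1$ is a non-collider on $p$, so $p$ is blocked --- a contradiction. The remaining case $X \leftrightarrow V_1$ is the crux: then $V_1 \in \Dis(X)$, so $V_1 \in \bS$, and to stay active $V_1$ must be a collider, whence the next edge into $V_1$ is $V_1 \leftarrow V_2$ or $V_1 \leftrightarrow V_2$; a directed edge $V_1 \leftarrow V_2$ either puts an internal $V_2$ into $\Pa(\Dis(X)) \subseteq \bS$ as a non-collider (blocking $p$) or forces $W = V_2 \in \Pa(\Dis(X))$ (a contradiction), so the edge must be bidirected and $V_2 \in \Dis(X)$. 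Iterating this dichotomy, $p$ is forced to be an all-bidirected chain $X \leftrightarrow V_1 \leftrightarrow \cdots$ with every internal vertex in $\Dis(X) \cap \bS$; since $\cG$ is finite the chain must terminate at $W$ via a bidirected edge, putting $W \in \Dis(X)$ --- the final contradiction. Hence no active path exists, and (b) follows.

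I expect the bidirected case of part (b) to be the main obstacle, since it is the only branch in which colliders can legitimately remain open (every vertex of $\Dis(X)$ met along the path is forced into $\bS$), so one cannot simply block the path at a conditioned non-collider; instead the argument must exploit the closure of $\Dis(X)$ under bidirected edges and of $\Pa(\Dis(X))$ under incoming directed edges to show that opening those colliders never lets the path escape to a vertex outside $\Dis(X) \cup \Pa(\Dis(X))$. The other branches are routine given the observations that parents of $X$ sit in $\bS$ as non-colliders and that an out-edge from $X$ forces a purely causal path. Note also that only the soundness direction of the Markov property is used, so no faithfulness-type assumption enters.
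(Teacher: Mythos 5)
Your proof is correct, but it takes a genuinely different route from the paper. The paper's proof is two sentences: containment follows from fixability, and the conditional independence is obtained by citing the \emph{ordered local Markov property} for ADMGs from \cite{richardson2003markov}, applied to the ancestral set $\ND(X)$ --- the set $\bS = \Pa(\Dis(X)) \cup \Dis(X) \setminus \{X\}$ is precisely the Markov blanket appearing in that property. You instead give a self-contained argument: you prove the m-separation $X \perp_m \bZ \setminus \bS \mid \bS$ directly by a path-tracing case analysis and then invoke only the soundness direction of the global Markov property. Your case analysis is sound: the key observation that no collider on an active path can lie in $\De(X)$ (since $\bS \subseteq \ND(X)$ contains no descendant of $X$) correctly kills the $X \to V_1$ branch, parents of $X$ land in $\bS$ as non-colliders in the $X \gets V_1$ branch, and in the $X \leftrightarrow V_1$ branch the closure of $\Dis(X)$ under bidirected edges and of $\Pa(\Dis(X))$ under incoming directed edges traps the path inside $\bS$ until it must terminate at a vertex of $\Dis(X) \cup \Pa(\Dis(X))$, contradicting $W \notin \bS$. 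Your containment argument for $\Pa(\Dis(X)) \setminus \{X\} \subseteq \ND(X)$ (splicing a directed path into a cycle) is also a correct elaboration of what the paper dismisses as ``direct from fixability.'' What each approach buys: the paper's citation is shorter but silently requires checking that the district and its parents computed in the induced subgraph on the ancestral set coincide with those in the full graph (which does follow from fixability, exactly because $\Dis(X) \setminus \{X\}$ and $\Pa(\Dis(X)) \setminus \{X\}$ sit inside $\ND(X)$ and districts are closed under bidirected paths); your argument makes every use of fixability and acyclicity explicit and needs no machinery beyond global m-separation, at the cost of length.
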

\begin{proof}
    The fact that $\bS \subseteq \bZ$ follows from directly from the fixability condition.
    Applying the \textit{ordered local Markov property} \cite[Section 3]{richardson2003markov} to the ancestral set $\ND(X)$, we obtain that $\bS$ is a Markov blanket of $X$ with respect to $\bZ$.
\end{proof}

\subsection{Weak edges}
\label{sec:appendix-weak-edges}

In this section, we describe a simple concrete example whereby it is suboptimal to first learn a correct causal graph and then apply identifiability formulas to estimate $\bbP_{\bx}(\by)$.
In particular, correctly learning the causal graph $\cG^*$ may require taking a large number of samples, especially in the presence of ``weak edges''.
However, one would expect such edges to contribute little to $\bbP_{\bx}(\by)$.

Suppose we have a probability distribution $\bbP$ on variables $\{X,Y,Z\}$ generated as follows:
\begin{align*}
Z \gets & \Bern(1/2)\\
X \gets & \begin{cases}
Z & \text{with probability $\eps > 0$}\\
\Bern(1/2) & \text{with probability $1-\eps$}
\end{cases}\\
Y \gets & X \xor Z
\end{align*}
The causal graph that exactly captures $\bbP$ is a complete DAG with edges $Z \to X \to Y$ and $Z \to Y$; see $\cG_1$ in \cref{fig:weak-edge-example}.
However, for extremely small $\eps$, one would require $\Omega(1/\eps)$ samples to detect a dependency between $X$ and $Z$.
So, with small $\eps$ and insufficient samples, one may erroneously recover a subgraph without the $Z \to X$ arc; see $\cG_2$ in \cref{fig:weak-edge-example}.

\begin{figure}[htb]
    \centering
    \resizebox{0.6\linewidth}{!}{
    \begin{tikzpicture}
% \node[] at (0,1) {$\cG_1$};
\node[] at (-2,-0.5) {$\cG_1$};
\node[draw, thick, circle] at (0,0) (z1) {$Z$};
\node[draw, thick, circle] at (-1,-1) (x1) {$X$};
\node[draw, thick, circle] at (1,-1) (y1) {$Y$};
\draw[thick, -Stealth] (z1) -- (x1);
\draw[thick, -Stealth] (z1) -- (y1);
\draw[thick, -Stealth] (x1) -- (y1);

% \draw[thick, dashed] (2,1.5) -- (2,-1.5);
\draw[thick, dashed] (2,0.5) -- (2,-1.5);

% \node[] at (4,1) {$\cG_2$};
\node[] at (6,-0.5) {$\cG_2$};
\node[draw, thick, circle] at (4,0) (z2) {$Z$};
\node[draw, thick, circle] at (3,-1) (x2) {$X$};
\node[draw, thick, circle] at (5,-1) (y2) {$Y$};
\draw[thick, -Stealth] (z2) -- (y2);
\draw[thick, -Stealth] (x2) -- (y2);
\end{tikzpicture}
    }
    \caption{While it is hard to distinguish $\cG_1$ from $\cG_2$ for small $\eps$ with few samples from $\bbP$, estimating $\bbP_{x}(y)$ using $\cG_2$ only incurs an additive error of $O(\eps)$.}
    \label{fig:weak-edge-example}
\end{figure}
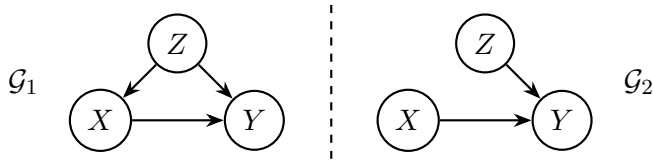

Now, suppose we are interested in estimating $\bbP_0(1) = \bbP(Y = 1 \mid \DO(X = 0))$ from observational data.
One can check that the correct answer is $\bbP(Y = 1 \mid \DO(X = 0)) = 1/2$.
Applying standard adjustment formulas under $\cG_1$ yield $\bbP(Y = 1 \mid \DO(X = 0)) = \sum_{z \in \{0,1\}} \bbP(Z = z) \cdot \bbP(Y = 1 \mid X = 0, Z = z) = 1/2$ as expected.
Meanwhile, under $\cG_2$, the estimation would simply by $\bbP(Y = 1 \mid X = 0) = (1-\eps)/2 = 1/2 - \eps/2$.
Thus, see that the estimation error is only an additive $O(\eps)$ factor away from the ground truth.

\subsection{Stronger results under causal faithfulness}
\label{sec:appendix-faithfulness}

Recall from \AMBAshort\ (\cref{alg:AMBA}) that we need to perform conditional independence checks of the form $\bX \ind_\eps \ND(\bX) \setminus \bS \mid \bS$, which could potentially involve up to $|\bV|$ variables.
Furthermore, we also know from \cref{thm:AMBA} that the required sample complexity of conditional independence testing typically increases as the total number of variables involved increases.
Thus, it would be preferable if we just check whether $\bX \ind_\eps V \mid \bS$ for each $V \in \ND(\bX) \setminus \bS$, and derive that $\bX \ind \ND(\bX) \mid \bS$.
When $\eps = 0$, this implication is a form \textit{compositionality}, and is well-known to hold under the faithfulness assumption (since the set of d-separation statements in a graph is a \textit{graphoid}, see e.g. \cite[Chapter 1]{maathuis2018handbook}), we provide an elementary proof below.

\begin{mylemma}
\label{lem:can-merge-independencies}
Let $\bA, \bB, \bC, \bD$ be disjoint subsets of variables.
Under the causal faithfulness assumption, if $\bA \ind \bB \mid \bC$ and $\bA \ind \bD \mid \bC$, then $\bA \ind (\bB \cup \bD) \mid \bC$.
\end{mylemma}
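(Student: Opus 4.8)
The plan is to route the argument through the (unknown) causal DAG $\cG^*$. By the contrapositive of the faithfulness assumption, the hypotheses $\bA \ind \bB \mid \bC$ and $\bA \ind \bD \mid \bC$ imply that $\bC$ d-separates $\bA$ from $\bB$ and that $\bC$ d-separates $\bA$ from $\bD$ in $\cG^*$. Since $\bA, \bB, \bC, \bD$ are pairwise disjoint, $\bB \cup \bD$ is disjoint from both $\bA$ and $\bC$, so it is meaningful to ask whether $\bC$ d-separates $\bA$ from $\bB \cup \bD$; and if it does, then the Markov property (\cref{def:markov}) immediately yields the desired conclusion $\bA \ind (\bB \cup \bD) \mid \bC$. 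Hence the whole lemma reduces to the purely graphical \emph{composition} claim: if $\bC$ d-separates $\bA$ from $\bB$ and $\bC$ d-separates $\bA$ from $\bD$ in $\cG^*$, then $\bC$ d-separates $\bA$ from $\bB \cup \bD$ in $\cG^*$.

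I would prove this graphical claim by contradiction. Suppose $\bC$ does not d-separate $\bA$ from $\bB \cup \bD$; then there is a path $\pi$ in $\cG^*$ between some vertex $a \in \bA$ and some vertex $v \in \bB \cup \bD$ that is active given $\bC$. Now $v$ lies either in $\bB$ or in $\bD$. If $v \in \bB$, then $\pi$ is an active path between a vertex of $\bA$ and a vertex of $\bB$ given $\bC$, contradicting the assumption that $\bC$ d-separates $\bA$ from $\bB$; the case $v \in \bD$ is entirely symmetric. Therefore no such path $\pi$ can exist, which establishes the composition claim and, by the chain of implications above, the lemma.

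The argument is essentially immediate, and the only point that requires a little care is the set-level definition of d-separation: a path witnessing a violation of the conclusion necessarily has one endpoint in $\bA$ and its other endpoint in $\bB \cup \bD$, so it can be re-read as a witness against exactly one of the two hypothesized d-separations. (This is precisely the \emph{composition} graphoid axiom, which holds for d-separation because whether a fixed ordered triple of sets is d-separated is governed solely by the existence of a single active path between the first two sets.) Thus there is no genuine obstacle here beyond this bookkeeping — in particular, no new probabilistic estimate is needed, only faithfulness together with the Markov property of $\bbP(\bV)$ with respect to $\cG^*$.
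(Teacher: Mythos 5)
Your proposal is correct and follows essentially the same route as the paper's proof: both reduce the lemma to the observation that any path active given $\bC$ between $\bA$ and $\bB \cup \bD$ has its non-$\bA$ endpoint in exactly one of $\bB$ or $\bD$, and hence witnesses a violation of one of the two hypothesized separations, with faithfulness and the Markov property supplying the translation between independence and d-separation. The only cosmetic difference is that you phrase the argument as a direct proof of the graphical composition property followed by an appeal to the Markov property, whereas the paper argues by contradiction starting from $\bA \not\ind (\bB \cup \bD) \mid \bC$; the content is identical.
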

\begin{proof}
Suppose, for a contradiction, that $\bA \not\ind (\bB \cup \bD) \mid \bC$.
Under the causal faithfulness assumption, this means that there is a d-connected path $P$ from some $A \in \bA$ to some $V \in \bB \cup \bD$ that is active with respect to $\bC$.
Without loss of generality, due to symmetry of the statement, suppose that $V \in \bB$.
That is, $P$ is a path from $A \in \bA$ to some $V \in \bV$ that is active with respect to $\bC$.
But such an active path $P$ contradicts the assumption that $\bA \ind \bB \mid \bC$.
Contradiction.
\end{proof}

Note that \cref{lem:can-merge-independencies} is \emph{false} in general with respect to unfaithful distributions.

\begin{example}\label{example:violation-of-faithfulness}
The simple 3-variable distribution $X = Z_1 \oplus Z_2$, where $Z_1$ and $Z_2$ are independent fair coin flips is unfaithful to any DAG on 3 nodes.
To see why, observe that any two variables are unconditionally independent but completely dependent upon conditioning on the third.
So, one would minimally have to use a v-structure, say $Z_1 \to X \gets X_2$ to represent this.
However, $Z_1 \to X$ is an active path which implies $Z_1 \not\ind X$ under the causal faithfulness assumption, which is not true in $\bbP(Z_1, Z_2, X)$.
\end{example}

Unfortunately, faithfulness alone is not sufficient to ensure the desired implication.
As we demonstrate in the following example (a minor adaptation of the above example), a distribution $\bbP(\bV)$ may be faithful to a DAG, but fail to satisfy the desired compositionality-style property.

% Furthermore, we also know from \cref{thm:AMBA} that the required sample complexity typically increases as the total number of variables involved increases.
% Under causal faithfulness, \cref{lem:can-merge-independencies} allows us to perform checks with bounded number of variables: we can check $\bX \ind V \mid \bS$ for each $V \in \ND(\bX) \setminus \bS$ and then apply \cref{lem:can-merge-independencies} to conclude that $\bX \ind \ND(\bX) \setminus \bS \mid \bS$.

% Repeating the same analysis of \cref{thm:AMBA}, one can show that $\wt{\cO} \left( \frac{|\bS|}{\eps^2 \alpha_{\bS}^2} \cdot \sqrt{|\bSigma_{\bX}| \cdot |\bSigma_{\bY}| \cdot |\bSigma_{\bS}| \cdot \max_{V \in \VminusXY} |\bSigma_V|} \cdot \log \frac{1}{\delta} \right)$ would suffice under causal sufficiency.
% Note that $|\bSigma_{\bS}| \cdot \max_{V \in \VminusXY} |\bSigma_V|$ could be significantly smaller than $|\bSigma_{\bA}|$.
% For example, when all variables are binary and $|\bS| \ll |\bA|$, we have $|\bSigma_{\bS}| \cdot \max_{V \in \VminusXY} |\bSigma_V| = 2^{1 + |\bS|} \ll 2^{|\bA|} = |\bSigma_{\bA}|$.
% A similar situation happens for \MAMBAshort\ (\cref{alg:MAMBA}) and \cref{thm:mAMBA}.

\begin{mylemma}
Let $0 < \eps \leq 1/2$.
Consider a probability distribution $\bbP$ over three binary variables $(A, B, X)$ where $A \in \Bern(1/2)$ and $B \in \Bern(1/2)$ are two independent Bernoulli random variables, each with success probability $1/2$ and $X$ is defined as follows:
\[
X =
\begin{cases}
A \oplus B & \text{with probability $1 - 2 \eps$}\\
A & \text{with probability $\eps$}\\
B & \text{with probability $\eps$}
\end{cases}
\]
We have $\Delta_{X \ind A \mid \varnothing} = \Delta_{X \ind B \mid \varnothing} = \eps$ and $\Delta_{X \ind (A,B) \mid \varnothing} = \frac{1}{2} - \eps$.
\end{mylemma}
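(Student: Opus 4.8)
The plan is a direct computation. Since the conditioning set is empty, \cref{def:approx-cond-ind} together with \cref{eq:approx-cond-ind-alternative} gives, for disjoint $\bU, \bW$,
\[
\Delta_{\bU \ind \bW \mid \varnothing}
= \sum_{u,w} \left| \bbP(u,w) - \bbP(u)\,\bbP(w) \right|
= \sum_{u} \bbP(u) \sum_{w} \left| \bbP(w \mid u) - \bbP(w) \right|,
\]
so the whole proof amounts to writing out the joint law of $(A,B,X)$ and substituting. First I would record the marginals: $\bbP(A=a) = \bbP(B=b) = \tfrac12$ by assumption, and, conditioning on which branch of the mixture is active and using $\bbP(A\oplus B = x) = \bbP(A=x) = \bbP(B=x) = \tfrac12$, the law of total probability gives $\bbP(X=x) = (1-2\eps)\tfrac12 + \eps\tfrac12 + \eps\tfrac12 = \tfrac12$.

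For $\Delta_{X \ind A}$, I would condition on $A=a$. Because $B$ and the branch indicator are independent of $A$, in the ``$X = A\oplus B$'' and ``$X = B$'' branches $X$ is a fair coin independent of $a$, while in the ``$X = A$'' branch $X = a$ deterministically; hence $\bbP(X = a \mid A = a) = (1-2\eps)\tfrac12 + \eps + \eps\tfrac12 = \tfrac12 + \tfrac{\eps}{2}$. Then $\sum_x |\bbP(x \mid A=a) - \bbP(x)| = 2\cdot\tfrac{\eps}{2} = \eps$ for each $a$, so $\Delta_{X \ind A} = \eps$. The construction is invariant under swapping $A \leftrightarrow B$ (since $A\oplus B = B\oplus A$ and the two singleton branches are exchanged), so the identical computation yields $\Delta_{X \ind B} = \eps$.

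For $\Delta_{X \ind (A,B)}$, I would enumerate $\bbP(X \mid A=a, B=b)$ over the four assignments of $(a,b)$: when $a=b=0$ all three branches force $X=0$; when $a=b=1$ the ``$\oplus$'' branch forces $X=0$ and the two singleton branches force $X=1$, so $\bbP(X=0 \mid 1,1) = 1-2\eps$; and when $a\neq b$ the ``$\oplus$'' branch and one of the singleton branches give the common value $a\oplus b = 1$ while the remaining singleton branch gives $0$, so $\bbP(X=1 \mid a,b) = 1-\eps$. Since $\bbP(A=a,B=b) = \tfrac14$ and $\bbP(X=x) = \tfrac12$, every reference product $\bbP(x)\bbP(a,b)$ equals $\tfrac18$, and summing the eight deviations $|\bbP(x,a,b) - \tfrac18|$ and simplifying (using $0 < \eps \le \tfrac12$ to fix the signs inside the absolute values, e.g.\ $\tfrac{\eps}{4} \le \tfrac18$) gives the stated value.

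I do not expect any substantive obstacle: the argument is a bookkeeping exercise over the eight cells of the joint distribution. The only things requiring care are (i) modeling the mixture correctly --- treating the branch indicator as independent of $(A,B)$ so that each conditional law $\bbP(X \mid A, B)$ is the obvious mixture of point masses --- and (ii) resolving the absolute values in the final sum, which is exactly where the hypothesis $0 < \eps \le \tfrac12$ enters.
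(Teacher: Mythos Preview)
Your approach is exactly the paper's: compute the marginals, tabulate $\bbP(X\mid A,B)$ on the four cells, and sum the deviations. Your values of $\bbP(X=0\mid a,b)\in\{1,\eps,\eps,1-2\eps\}$ match the paper's table verbatim, and your $\Delta_{X\ind A}=\eps$ computation is the same as the paper's.

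There is, however, a genuine gap in the last step, and it is shared by the paper. You write ``summing the eight deviations $|\bbP(x,a,b)-\tfrac18|$ \ldots\ gives the stated value,'' but if you actually perform that sum you obtain
\[
\Delta_{X\ind(A,B)\mid\varnothing}
=\tfrac14\sum_{x,a,b}\bigl|\bbP(x\mid a,b)-\tfrac12\bigr|
=\tfrac14\cdot 2\Bigl(\tfrac12+(\tfrac12-\eps)+(\tfrac12-\eps)+\bigl|\tfrac12-2\eps\bigr|\Bigr),
\]
which equals $1-2\eps$ for $\eps\le\tfrac14$ and $\tfrac12$ for $\tfrac14<\eps\le\tfrac12$, not $\tfrac12-\eps$. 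The paper obtains $\tfrac12-\eps$ only because its final line drops the sum over $x\in\{0,1\}$ (writing four terms instead of eight) and resolves $|1-2\eps-\tfrac12|=\tfrac12-2\eps$ citing ``$\eps\le\tfrac12$,'' which in fact requires $\eps\le\tfrac14$. Your example ``$\eps/4\le\tfrac18$'' handles the off-diagonal cells but not the $(1,1)$ cell. None of this affects the lemma's role in the paper---the point is only that $\Delta_{X\ind(A,B)}$ stays bounded away from $0$ while $\Delta_{X\ind A}=\Delta_{X\ind B}=\eps$---but as stated the identity $\Delta_{X\ind(A,B)\mid\varnothing}=\tfrac12-\eps$ is off by a factor of $2$, and your proposed proof would discover this upon carrying out the arithmetic.
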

\begin{proof}
By construction, $\bbP(A = 0) = \bbP(B = 0) = \bbP(X = 0) = 1/2$.
Meanwhile, one can check that $\bbP(X = 0, A = 0) = \bbP(X = 1, A = 1) = \frac{1}{4} + \frac{\eps}{2}$ and $\bbP(X = 0, A = 1) = \bbP(X = 1, A = 0) = \frac{1}{4}$.
For instance, $\bbP(X = 0, A = 0) = {\color{blue}\bbP(X = 0 \mid A = 0)} \cdot {\color{red}\bbP(A = 0)} = {\color{blue}\left( (1 - \eps) \cdot \frac{1}{2} + \eps + \eps \cdot \frac{1}{2} \right)} \cdot {\color{red}\frac{1}{2}} = \frac{1}{4} + \frac{\eps}{2}$.
So, $\sum_{x,a \in \{0,1\}} |\bbP(x,a) - \bbP(x) \cdot \bbP(a)| = \eps$.
By \cref{def:approx-cond-ind}, this establishes $\Delta_{X \ind A \mid \varnothing} = \eps$.

The analysis of $\Delta_{X \ind B \mid \varnothing} = \eps$ is symmetric by replacing the role of $A$ by $B$ in the above analysis.

Since $A$ and $B$ are independent Bernoulli random variables, we see that $\bbP(A = a, B = b) = \bbP(A = a) \cdot \bbP(B = b) = \frac{1}{2} \cdot \frac{1}{2} = \frac{1}{4}$ for any $a,b \in \{0,1\}$.
Meanwhile,
\begin{align*}
\bbP(X = 0 \mid A = 0, B = 0) &= 1\\
\bbP(X = 0 \mid A = 0, B = 1) &= \eps\\
\bbP(X = 0 \mid A = 1, B = 0) &= \eps\\
\bbP(X = 0 \mid A = 1, B = 1) &= 1-2\eps
\end{align*}
So,
\begin{align*}
\sum_{x,a,b \in \{0,1\}} \left| \bbP(x,a,b) - \bbP(x) \cdot \bbP(a,b) \right|
&= \sum_{x,a,b \in \{0,1\}} \bbP(a,b) \cdot \left| \bbP(x \mid a,b) - \bbP(x) \right|\\
&= \frac{1}{4} \cdot \sum_{x,a,b \in \{0,1\}} \left| \bbP(x \mid a,b) - \frac{1}{2} \right| \tag{Since $\bbP(a,b) = \frac{1}{4}$ and $\bbP(x) = \frac{1}{2}$ always}\\
&= \frac{1}{4} \cdot \left( \left| 1 - \frac{1}{2} \right| + \left| \eps - \frac{1}{2} \right| + \left| \eps - \frac{1}{2} \right| + \left| 1 - 2 \eps - \frac{1}{2} \right| \right) \tag{From above}\\
&= \frac{1}{4} \cdot \left( 2 - 4 \eps \right) \tag{Since $\eps \leq \frac{1}{2}$}\\
&= \frac{1}{2} - \eps
\end{align*}
By \cref{def:approx-cond-ind}, this establishes $\Delta_{X \ind (A,B) \mid \varnothing} = \frac{1}{2} - \eps$.
\end{proof}

The above example demonstrates that the faithfulness assumption is insufficient for our purposes.
Instead, we need an assumption of the following form; as we will show, this assumption is implied by a type of \textit{strong faithfulness} assumption.

\begin{mydefinition}
    We say that $\bbP(\bV)$ obeys \emph{$(\eps,\gamma)$-strong compositionality} if, for any disjoint sets $\bA, \bB, \bC, \bD \subseteq \bV$, the following is true:
    \[
    (\bA \ind_\eps \bB \mid \bC)
    \wedge
    (\bA \ind_\eps \bD \mid \bC)
    \implies
    (\bA \ind_{\gamma \eps} \bB \cup \bD \mid \bC)
    \]
\end{mydefinition}

Under $(\eps,\gamma)$-strong compositionality, we can derive that $\bX \ind_\eps \bZ \setminus \bS \mid \bS$ from smaller conditional independence tests; in particular, using a bisection arguments, if $\bX \ind_\eps V \mid \bS$ for all $V \in \bZ \setminus \bS$, then $\bX \ind_{\gamma^k \eps} \bZ \setminus \bS \mid \bS$ for $k = \lceil \log_2(|\bZ \setminus \bS|) \rceil$.
Finally, we relate can strong compositionality to the faithfulness assumption: strong faithfulness implies strong compositionality with $\gamma = 0$, as follows.

\begin{myassumption}[TV Strong faithfulness]
    If $\bA$ is d-connected to $\bB$ given $\bC$ in $\cG^*$, then
    \[
    \Delta_{\bA \ind \bB \mid \bC} > \beta
    \]
    Equivalently, $\Delta_{\bA \ind \bB \mid \bC} \leq \beta \implies \bA$ is d-separated from $\bB$ given $\bC$.
\end{myassumption}

\begin{mylemma}[TV strong faithfulness implies strong compositionality]
    Suppose $\bbP(\bV)$ is $\beta$-TV strong faithful to $\cG^*$.
    Then $\bbP(\bV)$ is $(\beta, 0)$-compositional.
\end{mylemma}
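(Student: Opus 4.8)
The plan is to push the two approximate independences up to the graph level via strong faithfulness, merge them there using the composition property of d-separation, and then push the combined separation back down to $\bbP$ via the Markov property. Concretely, fix disjoint $\bA, \bB, \bC, \bD \subseteq \bV$ and assume the hypotheses $\bA \ind_\beta \bB \mid \bC$ and $\bA \ind_\beta \bD \mid \bC$, i.e.\ $\Delta_{\bA \ind \bB \mid \bC} \le \beta$ and $\Delta_{\bA \ind \bD \mid \bC} \le \beta$. Unfolding the definition of $(\beta,0)$-strong compositionality, it suffices to show $\bA \ind_{0} \bB \cup \bD \mid \bC$, that is $\Delta_{\bA \ind \bB \cup \bD \mid \bC} = 0$.

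First I would apply $\beta$-TV strong faithfulness to each hypothesis separately, in its stated contrapositive form: since $\Delta_{\bA \ind \bB \mid \bC} \le \beta$, $\bA$ is d-separated from $\bB$ given $\bC$ in $\cG^*$, and since $\Delta_{\bA \ind \bD \mid \bC} \le \beta$, $\bA$ is d-separated from $\bD$ given $\bC$ in $\cG^*$. The inequalities line up because $\ind_\beta$ is defined in \cref{def:approx-cond-ind} through $\Delta \le \beta$, exactly the antecedent of the strong-faithfulness implication.

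The one genuinely graph-theoretic step is to merge these into the single statement that $\bA$ is d-separated from $\bB \cup \bD$ given $\bC$ in $\cG^*$. I would prove this directly, mirroring the path argument of \cref{lem:can-merge-independencies}: if it failed, there would be a path $P$ in $\cG^*$ from some $A \in \bA$ to some $V \in \bB \cup \bD$ that is active given $\bC$; by the symmetry of the two cases assume $V \in \bB$, and then $P$ is an active path from $\bA$ to $\bB$ given $\bC$, contradicting the first d-separation. (Equivalently, this is the composition property of the graphoid of d-separation statements, which one could instead cite from \cite{maathuis2018handbook}.) Finally I would invoke the Markov property of $\bbP(\bV)$ with respect to $\cG^*$ — part of the standing assumptions of the graphical framework, and in any case presumed whenever one speaks of faithfulness — to conclude that d-separation of $\bA$ and $\bB \cup \bD$ given $\bC$ forces the exact conditional independence $\bA \ind \bB \cup \bD \mid \bC$ in $\bbP$. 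Hence every term of $\Delta_{\bA \ind \bB \cup \bD \mid \bC} = \sum_{\ba,\bb,\bd,\bc} \bbP(\bc)\,\bigl|\bbP(\ba,\bb,\bd \mid \bc) - \bbP(\ba \mid \bc)\,\bbP(\bb,\bd \mid \bc)\bigr|$ vanishes, so $\Delta_{\bA \ind \bB \cup \bD \mid \bC} = 0 = 0 \cdot \beta$, which is precisely $(\beta,0)$-strong compositionality.

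I do not expect a real obstacle here: the composition step is the only non-bookkeeping ingredient, and the short path argument above handles it. The two things to be careful about are keeping the directions of the two implications straight (faithfulness takes us from $\bbP$ into $\cG^*$, while Markovness takes the combined separation back into $\bbP$, so both are genuinely needed), and matching the weak ($\le$) versus strict ($>$) inequalities between $\ind_\eps$ in \cref{def:approx-cond-ind} and the strong-faithfulness assumption, which as stated are consistent.
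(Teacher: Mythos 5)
Your proposal is correct and follows essentially the same route as the paper's proof: apply the contrapositive form of $\beta$-TV strong faithfulness to each hypothesis to obtain the two d-separations, merge them into d-separation of $\bA$ from $\bB \cup \bD$ given $\bC$ (the composition property of d-separation), and conclude exact conditional independence, hence $\Delta_{\bA \ind \bB \cup \bD \mid \bC} = 0$. You are in fact slightly more careful than the paper in spelling out the path argument for the composition step and in explicitly naming the Markov property needed to descend from the graph back to $\bbP(\bV)$.
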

\begin{proof}
    Suppose $\bA \ind_\beta \bB \mid \bC$ and $\bA \ind_\beta \bD \mid \bC$.
    Then, by $\beta$-TV strong faithfulness, $\bA$ is is d-separated from $\bB$ given $\bC$, and $\bA$ is d-separated from $\bD$ given $\bC$.
    Thus, $\bA$ is d-separated from $\bB \cup \bD \mid \bC$, so $\bA \ind \bB \cup \bD \mid \bC$.
\end{proof}

% Possible example where we obey strong compositionality, but not strong faithfulness:
% \[
% X = \begin{cases}
%     U & \textrm{w.p.}~1 - 3\eps
%     \\
%     Z_1 \oplus Z_2 & \textrm{w.p.}~\eps
%     \\
%     Z_1 & \textrm{w.p.}~\eps
%     \\
%     Z_2 & \textrm{w.p.}~\eps
% \end{cases}
% \]
% where $U$ is an independent random variable.

% \input{appendix-ZBHK24-derivation}
% \input{appendix-completeness}
% \input{appendix-weak-edges}
% \input{appendix-faithfulness}
\section{Deferred proof details}
\label{sec:appendix-deferred-proofs}

\subsection{Covariate adjustment in the potential outcomes framework}
\label{sec:appendix-potential-outcomes-adjustment}

For simplicity, we describe the potential outcomes framework in the i.i.d.\ setting, i.e., we assume that $n$ samples are drawn independently from a distribution $\bbP(\bV)$, though we note that the following result can be extended to weaker settings (e.g. when samples are exchangeable but not necessarily independent).

In the PO framework, the treatment variables $\bX$ are considered to be given, along with a set $\bSigma_\bX$ of possible values for $\bX$.
Given $\bX$ and $\bSigma_\bX$, one takes as their starting point an indexed set of random variables $\{ \bY(\bx) \}_{\bx \in \bSigma_\bX}$, with $\bY(\bx)$ denoting the potential outcome associated with intervening to set $\bX$ equal to $\bx$.
Then, the \textit{factual} outcome $\bY$ is generated according to $\bX$ and the potential outcomes; typically, one assumes \textit{consistency}, i.e., that if $\bX = \bx$, then $\bY = \bY(\bx)$.
Hence, under the PO framework, we have $\bbP_\bx(\by) = \bbP(\bY(\bx) = \by)$ is the probability that $\bY$ takes on value $\by$ if $\bX$ is set to $\bx$.

Now, \cref{eqn:covariate-adjustment} can be derived as a consequence of consistency and an additional assumption about conditional independences.
In particular, $\bX$ is called \textit{conditionally ignorable} with respect to $\bZ$ if $\bY(\bx) \ind \bX \mid \bZ$ for all $\bx \in \Sigma_\bX$.

% No interference states that $\bY_i(\bx_1, \ldots, \bx_n) = \bY_i(\bx_i)$, where the subscripts indicate unit level information

% SUTVA is consistency plus no interference

\begin{mylemma}
\label{lem:Z-is-valid-under-SUTVA-and-conditional-ignorability}
Under consistency and conditional ignorability of $\bX$ with respect to $\bZ$, we have
\[
\bbP(\bY(\bx) = \by) = \sum_{\bz \in \bZ} \bbP(\bY = \by \mid \bZ = \bz, \bX = \bx) \cdot \bbP(\bZ = \bz)
\]
\end{mylemma}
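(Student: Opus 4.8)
The plan is to chain together three elementary facts in the following order: the law of total probability over the values of $\bZ$, conditional ignorability, and consistency. First I would expand
\[
\bbP(\bY(\bx) = \by) = \sum_{\bz} \bbP(\bY(\bx) = \by \mid \bZ = \bz) \cdot \bbP(\bZ = \bz),
\]
where the sum ranges over those $\bz \in \bSigma_\bZ$ with $\bbP(\bZ = \bz) > 0$.

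Next I would invoke conditional ignorability, $\bY(\bx) \ind \bX \mid \bZ$, which yields $\bbP(\bY(\bx) = \by \mid \bZ = \bz) = \bbP(\bY(\bx) = \by \mid \bZ = \bz, \bX = \bx)$ for every $\bz$ in the support of $\bZ$; this step is where one tacitly needs $\bbP(\bX = \bx \mid \bZ = \bz) > 0$ so that the conditioning event $\{\bZ = \bz, \bX = \bx\}$ has positive probability. Then I would apply consistency: on the event $\{\bX = \bx\}$ we have $\bY = \bY(\bx)$, and therefore $\bbP(\bY(\bx) = \by \mid \bZ = \bz, \bX = \bx) = \bbP(\bY = \by \mid \bZ = \bz, \bX = \bx)$. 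Substituting back into the total-probability expansion gives
\[
\bbP(\bY(\bx) = \by) = \sum_{\bz} \bbP(\bY = \by \mid \bZ = \bz, \bX = \bx) \cdot \bbP(\bZ = \bz),
\]
which is exactly \cref{eqn:covariate-adjustment}.

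There is no genuine obstacle in this argument; the only point deserving care is the implicit positivity (overlap) requirement $\bbP(\bX = \bx \mid \bZ = \bz) > 0$ needed to make the right-hand side well-defined. If one does not wish to assume positivity outright, the cleanest fix is to adopt the convention that summands with $\bbP(\bX = \bx \mid \bZ = \bz) = 0$ are omitted, which is harmless for the identity but worth flagging since it mirrors the role played by the parameter $\alpha_\bZ$ elsewhere in the paper.
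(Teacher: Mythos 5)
Your proof is correct and follows exactly the same three-step chain as the paper's: law of total probability over $\bZ$, then conditional ignorability, then consistency. Your added remark about the implicit positivity requirement $\bbP(\bX = \bx \mid \bZ = \bz) > 0$ is a reasonable point of care that the paper leaves tacit.
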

\begin{proof}
\begin{align*}
\bbP(\bY(\bx) = y)
&= \sum_{\bz \in \bZ} \bbP(\bY(\bx) = \by \mid \bZ = \bz) \cdot \bbP(\bZ = \bz) \tag{Law of total probability}\\
&= \sum_{\bz \in \bZ} \bbP(\bY(\bx) = \by \mid \bZ = \bz, \bX = \bx) \cdot \bbP(\bZ = \bz) \tag{Since $\bY(\bx) \ind \bX \mid \bZ$}\\
&= \sum_{\bz \in \bZ} \bbP(\bY = \by \mid \bZ = \bz, \bX = \bx) \cdot \bbP(\bZ = \bz) \tag{By consistency}
\end{align*}
\end{proof}

\subsection{Sample complexity for empirical estimation}
\label{sec:appendix-sample-complexity}

The proof of \cref{thm:estimation-error} relies on the following lemma.

\begin{mylemma}
\label{lem:properties-from-poissonization-sampling}
Suppose we have i.i.d.\ sample access to $\bbP(\bV)$.
Given integer $n > 0$ as a sampling parameter, we take $N_{\Pois} \sim \Pois(n)$ samples.
For any $\bU \subseteq \bV$, let the random variable $N_{\bu}$ denote the number of times $\bu \in \bSigma_{\bU}$ was realized within the $n_{\Pois}$ samples.
Then, the following statements hold:
\begin{enumerate}
    \item Let $\bA, \bB \subseteq \bV$ be disjoint sets of variables.
    For any $\ba, \ba' \in \bSigma_{\bA}$ and $\bb, \bb' \in \bSigma_{\bB}$ with $\bb \neq \bb'$, the ratios of random variables $\frac{N_{\ba, \bb}}{N_{\bb}}$ and $\frac{N_{\ba', \bb'}}{N_{\bb'}}$ are independent.
    \item Let $\bA, \bB \subseteq \bV$ be disjoint sets of variables.
    For any $\ba \in \bSigma_{\bA}$, $\bb \in \bSigma_{\bB}$, and integer $k \geq 1$, we have $\left( \frac{N_{\ba, \bb}}{N_{\bb}} - \bbP(\ba \mid \bb) \mid N_{\bb} \geq k \right) \sim \subG\left( \frac{1}{4k} \right)$.
\end{enumerate}
\end{mylemma}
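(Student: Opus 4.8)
The plan is to handle the two claims separately, in each case reducing everything to the structural facts about Poissonized sampling recorded in \cref{lem:poissonization}.

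\textbf{Part 1 (independence of the ratios).} First I would pass to the full joint alphabet $\bSigma_\bV$: by \cref{lem:poissonization}(a), the counts $(N_\bw)_{\bw \in \bSigma_\bV}$ are mutually independent. Since $\bA$ and $\bB$ are disjoint, both $N_\bb$ and $N_{\ba,\bb}$ are determined by the block of counts indexed by those $\bw \in \bSigma_\bV$ that agree with $\bb$ on the coordinates $\bB$, whereas $N_{\bb'}$ and $N_{\ba',\bb'}$ are determined by the block indexed by those $\bw$ that agree with $\bb'$ on $\bB$. Because $\bb \neq \bb'$, these two blocks are disjoint, so the pair $(N_\bb, N_{\ba,\bb})$ is independent of the pair $(N_{\bb'}, N_{\ba',\bb'})$ (grouping property of independent families); hence any fixed measurable functions of them are independent as well, in particular the ratios $N_{\ba,\bb}/N_\bb$ and $N_{\ba',\bb'}/N_{\bb'}$, under any convention for the $0/0$ case. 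The only point of care is to establish independence at the level of the count vectors first and take ratios afterwards, rather than reasoning about the ratios directly.

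\textbf{Part 2 (conditional sub-Gaussianity).} The key preliminary step is to identify the conditional law of $N_{\ba,\bb}$ given $N_\bb$. By Poisson thinning/splitting — equivalently, by viewing the samples that fall in $\{\bB = \bb\}$ as themselves a Poissonized i.i.d.\ draw from $\bbP(\cdot \mid \bb)$, with a $\Pois(n \cdot \bbP(\bb))$ number of them, and applying \cref{lem:poissonization}(c) to that sub-process — one obtains $(N_{\ba,\bb} \mid N_\bb = n') \sim \Bin(n', \bbP(\ba \mid \bb))$ for every integer $n' \geq 1$. Writing $p = \bbP(\ba \mid \bb)$, on the event $\{N_\bb = n'\}$ we may therefore write $\frac{N_{\ba,\bb}}{N_\bb} - p = \frac{1}{n'}\sum_{i=1}^{n'}(B_i - p)$ with $B_1, \ldots, B_{n'}$ i.i.d.\ $\Bern(p)$. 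Each $B_i - p$ is centred and supported on an interval of length $1$, so Hoeffding's lemma (\cref{lem:hoeffding}) gives $B_i - p \sim \subG(1/4)$, and \cref{lem:subgaussian-sum} then gives $\frac{1}{n'}\sum_{i=1}^{n'}(B_i - p) \sim \subG\!\left(\frac{1}{4n'}\right)$. Since enlarging the sub-Gaussian parameter preserves \cref{def:subgaussian}, for every $n' \geq k$ we get $\left(\frac{N_{\ba,\bb}}{N_\bb} - p \;\middle|\; N_\bb = n'\right) \sim \subG\!\left(\frac{1}{4k}\right)$. Finally I would invoke \cref{lem:subgaussian-take-max} with conditioning variable $N_\bb$ restricted to $\{n' : n' \geq k\}$ to conclude $\left(\frac{N_{\ba,\bb}}{N_\bb} - p \;\middle|\; N_\bb \geq k\right) \sim \subG\!\left(\frac{1}{4k}\right)$, which is exactly the claim.

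\textbf{Expected main obstacle.} The one non-routine step is the conditional-binomial identity $(N_{\ba,\bb} \mid N_\bb = n') \sim \Bin(n', \bbP(\ba \mid \bb))$: \cref{lem:poissonization}(c) is stated only for a single categorical variable together with the global count $N_\Pois$, so one must first justify, via Poisson splitting, that the samples with $\bB = \bb$ behave — after conditioning on how many of them there are — like an i.i.d.\ sample from $\bbP(\cdot \mid \bb)$, before \cref{lem:poissonization}(c) can be applied to that sub-population. Everything downstream (Hoeffding's lemma, sub-Gaussian additivity, \cref{lem:subgaussian-take-max}, and the bookkeeping of the nested conditionings) is routine; in particular this lemma is precisely what the proof sketch of \cref{thm:estimation-error} needs in order to control $J_{\geq \tau}$.
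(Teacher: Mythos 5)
Your proposal is correct and follows essentially the same route as the paper: independence of the disjoint count blocks for Part 1, and the conditional binomial law combined with Hoeffding's lemma, sub-Gaussian additivity, and \cref{lem:subgaussian-take-max} for Part 2. If anything, your Bernoulli decomposition is slightly more careful than the paper's direct appeal to Hoeffding's lemma on the $[0,n']$-valued count $N_{\ba,\bb}$ (which taken literally yields parameter $(n')^2/4$ rather than $n'/4$), and your explicit Poisson-splitting justification of $(N_{\ba,\bb} \mid N_\bb = n') \sim \Bin(n', \bbP(\ba \mid \bb))$ spells out a step the paper attributes to \cref{lem:poissonization} without elaboration.
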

\begin{proof}
We prove each item one at a time.
\begin{enumerate}
    \item By \cref{lem:poissonization}, the random variables $N_{\bb}$ and $N_{\bb'}$ are independent since $\bb \neq \bb'$.
    Then since $N_{\ba,\bb}$ and $N_{\ba',\bb'}$ are subcounts of $N_{\bb}$ and $N_{\bb'}$ respectively, so the corresponding ratios are also independent.
    \item By \cref{lem:poissonization}, we have $( N_{\ba, \bb} \mid N_{\bb} = k ) \sim \Bin(k, \bbP(\ba \mid \bb))$.
    Conditioned on $N_{\bb} = k$, \cref{lem:hoeffding} implies that
    $
    ( N_{\ba, \bb} - \bbE(N_{\ba, \bb}) ) = ( N_{\ba, \bb} - k \cdot \bbP(\ba \mid \bb) ) \sim \subG(\frac{k}{4})
    $.
    Thus, $\left( \frac{N_{\ba, \bb}}{N_{\bb}} - \bbP(\ba \mid \bb) \mid N_{\bb} = k \right) \sim \subG(\frac{1}{4k})$.
    The claim follows via \cref{lem:subgaussian-take-max}.
\end{enumerate}
\end{proof}

\estimationerror*
\begin{proof}[Proof sketch]
By definition, we have
\begin{align*}
T_{\bA,\bx,\by} - \hatT_{\bA,\bx,\by}
&= \sum_\ba \left( \bbP(\ba) \cdot \bbP(\by \mid \bx, \ba) - \frac{N_\ba}{N_\Pois} \cdot \frac{N_{\by,\bx,\ba}}{N_{\bx,\ba}} \right)\\
&= \sum_\ba \bbP(\ba) \cdot \left( \bbP(\by \mid \bx, \ba) - \frac{N_{\by,\bx,\ba}}{N_{\bx,\ba}} \right) + \sum_\ba \left( \bbP(\ba) - \frac{N_\ba}{N_\Pois} \right) \cdot \frac{N_{\by,\bx,\ba}}{N_{\bx,\ba}}
\end{align*}
where $N_\ba$, $N_\Pois$, $N_{\by,\bx,\ba}$, and $N_{\bx,\ba}$ are random Poisson variables from the Poissonization process with $N_\Pois \sim \Pois(n)$ for some parameter $n$; see \cref{sec:poissonization}.
Since $N_\Pois = \sum_{\ba} N_\ba = \sum_{\ba, \bx} N_{\bx,\ba} = \sum_{\ba, \bx, \by} N_{\by,\bx,\ba}$, we see that $0 \leq \frac{N_\ba}{N_\Pois} \leq 1$ and $0 \leq \frac{N_{\by,\bx,\ba}}{N_{\bx,\ba}} \leq 1$ for each of these fractional terms.

Let us define a threshold $\tau > 0$ and partition the values of $\bA$ accordingly:
\[
\bSigma_{\bA \geq \tau} = \left\{ \ba \in \bSigma_{\bA} : \bbP(\bx, \ba) \geq \tau \right\}
\]
Since $\alpha_{\bA} = \min_{\ba \in \bSigma_{\bA}} \bbP(\bx \mid \ba)$, we see that $\bbP(\ba) \leq \frac{\tau}{\alpha_{\bA}}$ for $\ba \not\in \bSigma_{\bA \geq \tau}$.

Let us define three summations $J_{< \tau}$, $J_{\geq \tau}$, and $K$ so that $T_{\bA,\bx,\by} - \hatT_{\bA,\bx,\by} = J_{< \tau} + J_{\geq \tau} + K$:
\begin{align}
J_{< \tau} &= \sum_{\ba \not\in \bSigma_{\bA \geq \tau}} \bbP(\ba) \cdot \left( \bbP(\by \mid \bx, \ba) - \frac{N_{\by,\bx,\ba}}{N_{\bx,\ba}} \right) \label{eq:J-less-tau-defn}\\
J_{\geq \tau} &= \sum_{\ba \in \bSigma_{\bA \geq \tau}} \bbP(\ba) \cdot \left( \bbP(\by \mid \bx, \ba) - \frac{N_{\by,\bx,\ba}}{N_{\bx,\ba}} \right) \label{eq:J-geq-tau-defn}\\
K &= \sum_\ba \left( \bbP(\ba) - \frac{N_\ba}{N_\Pois} \right) \cdot \frac{N_{\by,\bx,\ba}}{N_{\bx,\ba}} \label{eq:K-defn}
\end{align}
We will proceed to bound each of $|J_{< \tau}|$, $|J_{\geq \tau}|$, and $|K|$.

The easiest is $|J_{< \tau}|$, which follows from the definition of $\bSigma_{\bA \geq \tau}$:
\begin{align*}
|J_{< \tau}|
&= \left| \sum_{\ba \not\in \bSigma_{\bA \geq \tau}} \bbP(\ba) \cdot \left( \frac{N_{\by, \bx, \ba}}{N_{\bx, \ba}} - \bbP(\by \mid \bx, \ba) \right) \right| \tag{Definition of $|J_{< \tau}|$}\\
& \leq \sum_{\ba \not\in \bSigma_{\bA \geq \tau}} \bbP(\ba) \cdot \left|  \frac{N_{\by, \bx, \ba}}{N_{\bx, \ba}} - \bbP(\by \mid \bx, \ba) \right| \tag{By triangle inequality and $\bbP(\ba) \geq 0$}\\
&\leq \sum_{\ba \not\in \bSigma_{\bA \geq \tau}} \bbP(\ba) \tag{Since $\left|  \frac{N_{\by, \bx, \ba}}{N_{\bx, \ba}} - \bbP(\by \mid \bx, \ba) \right| \leq 1$}\\
&\leq \frac{\tau \cdot |\bSigma_{\bA}|}{\alpha_{\bA}} \tag{Since $\bbP(\ba) \leq \frac{\tau}{\alpha_{\bA}}$ for $\ba \not\in \bSigma_{\bA \geq \tau}$ and $|\bSigma_{\bA \geq \tau}| \leq |\bSigma_{\bA}|$}
\end{align*}

To bound $|J_{\geq \tau}|$, consider the concentration event $\cE^{J}_{\geq \tau}$ defined as follows:
\begin{equation}
\label{eq:J-event}
\cE^{J}_{\geq \tau}
= \bigcap_{\ba \in \bSigma_{\bA \geq \tau}} \left\{ N_{\bx, \ba} > \frac{n \cdot \bbP(\bx, \ba)}{2} \right\}
\end{equation}

We first observe that the event $\cE^{J}_{\geq \tau}$ holds with good probability.
\begin{align*}
1 - \Pr(\cE^{J}_{\geq \tau})
&\leq \sum_{\ba \in \bSigma_{\bA \geq \tau}} \Pr \left( N_{\bx, \ba} \leq \frac{n \cdot \bbP(\bx, \ba)}{2} \right) \tag{Union bound}\\
&\leq \sum_{\ba \in \bSigma_{\bA \geq \tau}} \exp \left( -\frac{n \cdot \bbP(\bx, \ba)}{12} \right) \tag{Using that $N_{\bx, \ba} \sim \Pois(n \cdot \bbP(\bx, \ba))$ and applying \cref{lem:poisson-concentration}}\\
&\leq |\bSigma_{\bA}| \cdot \exp \left( -\frac{n \tau}{12} \right) \tag{Since $\bbP(\bx, \ba) \geq \tau$ for $\bz \in \bSigma_{\bA \geq \tau} \subseteq \bSigma_{\bA \geq \tau}$}
\end{align*}

Under the event $\cE^{J}_{\geq \tau}$, we have $N_{\bx, \ba} > \frac{n \cdot \bbP(\bx, \ba)}{2}$ for any $\ba \in \bSigma_{\bA \geq \tau}$, so item 2 of \cref{lem:properties-from-poissonization-sampling} implies that
\[
\left( \frac{N_{\by, \bx, \ba}}{N_{\bx, \ba}} - \bbP(\by \mid \bx, \ba) \;\middle|\; N_{\bx, \ba} \geq \frac{n \cdot \bbP(\bx, \ba)}{2} \right)
\sim \subG \left( \frac{1}{4} \cdot \frac{2}{n \cdot \bbP(\bx, \ba)} \right)
= \subG \left( \frac{1}{2 n \cdot \bbP(\bx, \ba)} \right) \;,
\]
for any $\ba \in \bSigma_{\bA \geq \tau}$.
For any two disjoint $\ba, \ba' \in \bSigma_{\bA}$, we see that $(\bx, \ba)$ and $(\bx, \ba')$ are distinct values in the domain $\bSigma_{\bX} \times \bSigma_{\bA}$, so item 1 of \cref{lem:properties-from-poissonization-sampling} tells us that the terms $\frac{N_{\by, \bx, \ba}}{N_{\bx, \ba}}$ and $\frac{N_{\by, \bx, \ba'}}{N_{\bx, \ba'}}$ are independent.
\cref{lem:subgaussian-sum} further tells us that
$
J_{\geq \tau} = \sum_{\ba \in \bSigma_{\bA \geq \tau}} \bbP(\ba) \cdot \left( \frac{N_{\by, \bx, \ba}}{N_{\bx, \ba}} - \bbP(\by \mid \bx, \ba) \right)
\sim \subG \left( \sum_{\ba \in \bSigma_{\bA \geq \tau}} \frac{\bbP(\ba)^2}{2 n \cdot \bbP(\bx, \ba)} \right)
$ since coefficients $\{ \bbP(\ba) \}_{\ba \in \bA}$ are just (unknown) real numbers.
Then, for any $t > 0$, \cref{def:subgaussian} states that
\[
\Pr \left( \left| J_{\geq \tau} \right| > t \mid \cE^{J}_{\geq \tau} \right)
\leq 2 \exp \left( -\frac{t^2}{\sum_{\ba \in \bSigma_{\bA \geq \tau}} \frac{\bbP(\ba)^2}{2 n \cdot \bbP(\bx, \ba)}} \right)
\leq 2 \exp \left( - 2 n \alpha_{\bA} t^2 \right)
\]
where the last inequality is because $\alpha_{\bA} = \min_{\ba \in \bSigma_{\bA}} \bbP(\bx \mid \ba)$ and $\bSigma_{\bA \geq \tau} \subseteq \bSigma_{\bA}$:
\[
\sum_{\ba \in \bSigma_{\bA \geq \tau}} \frac{\bbP(\ba)^2}{2 n \cdot \bbP(\bx, \ba)}
= \sum_{\ba \in \bSigma_{\bA \geq \tau}} \frac{\bbP(\ba)}{2 n \cdot \bbP(\bx \mid \ba)}
\leq \sum_{\ba \in \bSigma_{\bA \geq \tau}} \frac{\bbP(\ba)}{2 n \cdot \alpha_{\bA}}
\leq \frac{1}{2 n \cdot \alpha_{\bA}}
\]

To bound $|K|$, we reduce to the analysis to the problem of producing an $\eps$-close estimate of $\bbP(\bA)$ by observing that $0 \leq \frac{N_{\by,\bx,\ba}}{N_{\bx,\ba}} \leq 1$ and $\frac{N_\ba}{N_\Pois}$ is the empirical estimate of $\bbP(\ba)$ for each $\ba \in \bSigma_{\bA}$.
That is,
\begin{align*}
|K|
&= \left| \sum_\ba \left( \bbP(\ba) - \frac{N_\ba}{N_\Pois} \right) \cdot \frac{N_{\by,\bx,\ba}}{N_{\bx,\ba}} \right| \tag{By \cref{eq:K-defn}}\\
&= \sum_\ba \left| \bbP(\ba) - \frac{N_\ba}{N_\Pois} \right| \cdot \left| \frac{N_{\by,\bx,\ba}}{N_{\bx,\ba}} \right| \tag{By triangle inequality}\\
&\leq \sum_\ba \left| \bbP(\ba) - \frac{N_\ba}{N_\Pois} \right| \tag{Since $0 \leq \frac{N_{\by,\bx,\ba}}{N_{\bx,\ba}} \leq 1$}\\
&\leq \sum_\ba \left| \bbP(\ba) - \wh{\bbP}(\ba) \right| \tag{By defining empirical distribution $\wh{\bbP}(\ba) = \frac{N_\ba}{N_\Pois}$}
\end{align*}
By \cref{lem:TV-estimation-guarantees}, when $N_\Pois \geq c_0 \cdot \left( \frac{|\bSigma_{\bA}| + \log \frac{1}{\delta'}}{(\eps')^2} \right)$ for some tolerance parameters $\eps', \delta' > 0$ and absolute constant $c_0 > 0$, we will have
$
\Pr \left( |K| \leq \eps' \right)
\leq \Pr \left( \sum_{\ba \in \bSigma_{\bA}} | \bbP(\ba) - \wh{\bbP}(\ba) | \leq \eps' \right)
\geq 1 - \delta'
$.

Before we proceed to wrap up the proof, let us collect the proven bounds below:
\begin{align}
|J_{< \tau}|
& \leq \frac{\tau \cdot |\bSigma_{\bA}|}{\alpha_{\bA}}
&& \text{deterministically} \label{eq:J-less-tau-bound}\\
\Pr( \lnot \cE^{J}_{\geq \tau})
& \leq |\bSigma_{\bA}| \cdot \exp \left( -\frac{n \tau}{12} \right) \label{eq:J-event-fails}\\
\Pr \left( \left| J_{\geq \tau} \right| > t \mid \cE^{J}_{\geq \tau} \right)
&\leq 2 \exp \left( - 2 n \alpha_{\bA} t^2 \right)
&& \text{for any $t > 0$} \label{eq:J-geq-tau-bound}\\
\Pr \left( |K| \leq \eps' \right)
&\leq 1 - \delta'
&& \text{for any $\eps', \delta' > 0$ when $N_\Pois \in \cO \left( \frac{|\bSigma| + \log \frac{1}{\delta'}}{(\eps')^2} \right)$} \label{eq:K-bound}
\end{align}

Now, observe that $|J_{< \tau}| \leq \frac{\eps}{3}$, $|J_{\geq \tau}| \leq \frac{\eps}{3}$ and $|K| \leq \frac{\eps}{3}$ jointly implies $|J_{< \tau} + J_{\geq \tau} + K| \leq |J_{< \tau}| + |J_{\geq \tau}| + |K| \leq \eps$ by triangle inequality.
So,
\begin{align*}
&\; \Pr \left( \left| T_{\bA,\bx,\by} - \hatT_{\bA,\bx,\by} \right| > \eps \right)\\
= &\; \Pr \left( |J_{< \tau} + J_{\geq \tau} + K| > \eps \right) \tag{By definition}\\
\leq &\; \Pr \left( |J_{< \tau}| > \frac{\eps}{3} \right) + \Pr \left( |J_{\geq \tau}| > \frac{\eps}{3} \right) + \Pr \left( |K| > \frac{\eps}{3} \right) \tag{Triangle inequality}\\
\leq &\; 0 + \Pr \left( |J_{\geq \tau}| > \frac{\eps}{3} \right) + \Pr \left( |K| > \frac{\eps}{3} \right) \tag{If we set $\frac{\eps}{3} = \frac{\tau \cdot |\bSigma_{\bA}|}{\alpha_{\bA}}$ in the deterministic bound of \cref{eq:J-less-tau-bound}}\\
\leq &\; \Pr \left( |J_{\geq \tau}| > \frac{\eps}{3} \right) + {\color{orange}\frac{\delta}{3}} \tag{If we set $\eps' = \frac{\eps}{3}$ and $\delta' = \frac{\delta}{3}$ in \cref{eq:K-bound} with $N_\Pois \in {\color{orange}\cO \left( \frac{|\bSigma_{\bA}| + \log \frac{1}{\delta'}}{(\eps')^2} \right)}$}\\
\leq &\; \Pr( \lnot \cE^{J}_{\geq \tau}) + \Pr \left( |J_{\geq \tau}| > \frac{\eps}{3} \mid \cE^{J}_{\geq \tau} \right) + {\color{orange}\frac{\delta}{3}} \tag{Conditioning on event $\cE^{J}_{\geq \tau}$}\\
\leq &\; {\color{blue}|\bSigma_{\bA}| \cdot \exp \left( -\frac{n \tau}{12} \right)} + {\color{red}2 \exp \left( - 2 n \alpha_{\bA} t^2 \right)} + {\color{orange}\frac{\delta}{3}} \tag{If we set $t = \frac{\eps}{3}$ then apply \cref{eq:J-event-fails} and \cref{eq:J-geq-tau-bound}}
\end{align*}
Recall that we set $\frac{\eps}{3} = \frac{\tau \cdot |\bSigma_{\bA}|}{\alpha_{\bA}} \iff \tau = \frac{\eps \alpha_{\bA}}{|3 \bSigma_{\bA}|}$ and $t = \frac{\eps}{3}$ above.
So, if we set
\begin{align*}
n
&= {\color{blue}\frac{36 |\bSigma_{\bA}|}{\eps \alpha_{\bA}} \log \left( \frac{3 |\bSigma_{\bA}|}{\delta} \right)} + {\color{red}\frac{9}{2 \eps^2 \alpha_{\bA}} \log \left( \frac{6}{\delta} \right)} + {\color{orange}\cO \left( \frac{|\bSigma_{\bA}| + \log \frac{1}{\delta}}{\eps^2} \right)}\\
&\in \wt{\cO} \left( \left( \frac{|\bSigma_{\bA}|}{\eps \alpha_{\bA}} + \frac{1}{\eps^2 \alpha_{\bA}} + \frac{|\bSigma_{\bA}|}{\eps^2} \right) \cdot \log \left( \frac{1}{\delta} \right) \right)
\end{align*}
then $\Pr \left( \left| T_{\bA,\bx,\by} - \hatT_{\bA,\bx,\by} \right| > \eps \right) \leq {\color{blue}\frac{\delta}{3}} + {\color{red}\frac{\delta}{3}} + {\color{orange}\frac{\delta}{3}} = \delta$.
\end{proof}

\subsection{Misspecification errors}
\label{sec:appendix-misspecification-errors}

\misspecificationerror*
\begin{proof}
Since $\bS \subseteq \bA$, we see that
\begin{align*}
\left| T_{\bS,\bx,\by} - T_{\bA,\bx,\by} \right|
& = \left| \sum_{\ba} \bbP(\by \mid \ba, \bx) \cdot \bbP(\ba \setminus \bs \mid \bs, \bx) \cdot \bbP(\bs) - \sum_{\ba} \bbP(\by \mid \ba, \bx) \cdot \bbP(\ba) \right| \tag{By \cref{eq:T-alternative} and $\bS \subseteq \bA$}\\
& = \left| \sum_{\ba} \bbP(\by \mid \ba, \bx) \cdot \bbP(\bs) \cdot \left( \bbP(\ba \setminus \bs \mid \bs, \bx) - \bbP(\ba \setminus \bs \mid \bs) \right) \right| \tag{Pull out common terms}\\
& = \left| \sum_{\ba} \bbP(\by \mid \ba, \bx) \cdot \frac{\bbP(\bs, \bx)}{\bbP(\bx \mid \bs)} \cdot \left( \bbP(\ba \setminus \bs \mid \bs, \bx) - \bbP(\ba \setminus \bs \mid \bs) \right) \right|\\
& \leq \sum_{\ba} \frac{\bbP(\bs, \bx)}{\bbP(\bx \mid \bs)} \cdot \left| \bbP(\ba \setminus \bs \mid \bs, \bx) - \bbP(\ba \setminus \bs \mid \bs) \right| \tag{Triangle inequality, non-negativity of probabilities, and since $\bbP(\by \mid \ba, \bx) \leq 1$}\\
& \leq \frac{1}{\alpha_{\bS}} \cdot \sum_{\ba} \bbP(\bs, \bx) \cdot \left| \bbP(\ba \setminus \bs \mid \bs, \bx) - \bbP(\ba \setminus \bs \mid \bs) \right| \tag{By \cref{eq:alpha-def}}\\
& \leq \frac{\eps}{\alpha_{\bS}} \tag{Since $\bX \ind_\eps \bA \setminus \bS \mid \bS$ and using \cref{eq:approx-cond-ind-alternative}}
\end{align*}
\end{proof}

\misspecificationerrorlowerbound*
\begin{proof}
Consider the following probability distribution $\bbP$ defined over 4 binary variables $\{A, B, X, Y\}$ in a topological ordering of $A \prec B \prec X \prec Y$: see \cref{fig:hardness-alpha}.

\begin{figure}[htb]
\centering
\resizebox{0.9\linewidth}{!}{
\begin{tikzpicture}
\node[] at (1,0.75) {$\cG$};
\node[draw, thick, circle] at (0,0) (A) {$A$};
\node[draw, thick, circle] at (2,0) (B) {$B$};
\node[draw, thick, circle] at (0,-2) (X) {$X$};
\node[draw, thick, circle] at (2,-2) (Y) {$Y$};
\draw[thick, -Stealth] (A) -- (B);
\draw[thick, -Stealth] (A) -- (X);
\draw[thick, -Stealth] (A) -- (Y);
\draw[thick, -Stealth] (B) -- (X);
\draw[thick, -Stealth] (B) -- (Y);
\draw[thick, -Stealth] (X) -- (Y);

\node[] at (6,-0.75) {\begin{tabular}{l}
$A = \begin{cases}
% 1 & \text{w.p.\ $\frac{\eps}{4 \alpha} \cdot \frac{\alpha - \eps/4}{(1 - \sqrt{\eps}/2) \cdot \eps / 4}$}\\
1 & \text{w.p.\ $\frac{\eps}{4 \alpha} \cdot \frac{\alpha - \eps/4}{1 - \sqrt{\eps}/2}$}\\
0 & \text{else}
\end{cases}$
\\
\\
$B = \begin{cases}
1 - A & \text{w.p.\ $1 - \sqrt{\eps}$}\\
0 & \text{w.p.\ $\sqrt{\eps}/2$}\\
1 & \text{w.p.\ $\sqrt{\eps}/2$}
\end{cases}$
\end{tabular}};

\node[] at (11,-0.75) {\begin{tabular}{l}
$X = \begin{cases}
A & \text{w.p.\ $1 - \alpha$}\\
1 - A & \text{w.p.\ $\alpha - \sqrt{\eps}/2$}\\
B & \text{w.p.\ $\sqrt{\eps}/2$}
\end{cases}$
\\
\\
$Y = \begin{cases}
1 & \text{if $X = 0, A = 1, B = 0$}\\
0 & \text{else}
\end{cases}$
\end{tabular}};
\end{tikzpicture}
}
\caption{Probability distribution $\bbP$ defined over 4 binary variables $\{A, B, X, Y\}$ in a topological ordering of $A \prec B \prec X \prec Y$ with parameters $\eps$ and $\alpha$, where $0 < \sqrt{\eps} \leq \alpha \leq 1/2$.}
\label{fig:hardness-alpha}
\end{figure}

We show in \cref{sec:appendix-hardness} that all the (conditional) probabilities of $\bbP$ are well-defined, and that we have the following conditional probabilities for $\bbP$:
\begin{center}
\begin{tabular}{cc|cccc}
\toprule
$a$ & $b$ & $\bbP(b \mid a)$ & $\bbP(X = 0 \mid a,b)$ & $\bbP(X = 0 \mid a)$ & $\sum_x | \bbP(x \mid a,b) - \bbP(x \mid a)|$\\
\midrule
$0$ & $0$ & $\sqrt{\eps}/2$ & $1 - \alpha + \sqrt{\eps}/2$ & $1 - \alpha + \eps/4$ & $\sqrt{\eps} - \eps/2$\\
$0$ & $1$ & $1-\sqrt{\eps}/2$ & $1 - \alpha$ & $1 - \alpha + \eps/4$ & $\eps/2$\\
$1$ & $0$ & $1-\sqrt{\eps}/2$ & $\alpha$ & $\alpha - \eps/4$ & $\eps/2$\\
$1$ & $1$ & $\sqrt{\eps}/2$ & $\alpha - \sqrt{\eps}/2$ & $\alpha - \eps/4$ & $\sqrt{\eps} - \eps/2$\\
\bottomrule
\end{tabular}
\end{center}

Let us identify $\bZ$ with $\{ A, B \}$ and $\bS$ with $\{A\}$, so $\bZ \setminus \bS = \{B\}$.
We now show the four properties.
\begin{enumerate}
    \item $\bZ$ is a valid adjustment set

    This is true since $\{ A, B \}$ satifies the backdoor adjustment criterion \cite{pearl1995causal}.
    
    \item $\bS \subset \bZ$ satisfies $\bX \ind_\eps \bZ \setminus \bS \mid \bS$

    Recall that $\bZ = \{ A, B \}$ and $\bS = \{A\}$.
    To see that $X \ind_{\eps} B \mid A$, observe the following:
    \begin{align*}
    &\; \sum_{x,a,b} \bbP(a) \cdot |\bbP(x,b \mid a) - \bbP(x \mid a) \cdot \bbP(b \mid a)|\\
    &= \sum_{a,b} \bbP(a) \cdot {\color{red}\bbP(b \mid a)} \cdot {\color{blue}\sum_x |\bbP(x \mid a,b) - \bbP(x \mid a)|}\\
    &= \bbP(A=0) \cdot {\color{red}\bbP(B=0 \mid A=0)} \cdot {\color{blue}(\sqrt{\eps}-\eps/2)}
    + \bbP(A=0) \cdot {\color{red}\bbP(B=1 \mid A=0)} \cdot {\color{blue}(\eps/2)}\\
    &\quad + \bbP(A=1) \cdot {\color{red}\bbP(B=0 \mid A=1)} \cdot {\color{blue}(\eps/2)}
    + \bbP(A=1) \cdot {\color{red}\bbP(B=1 \mid A=1)} \cdot {\color{blue}(\sqrt{\eps}-\eps/2)}\\
    &= \bbP(A=0) \cdot {\color{red}(\sqrt{\eps}/2)} \cdot {\color{blue}(\sqrt{\eps}-\eps/2)}
    + \bbP(A=0) \cdot {\color{red}(1-\sqrt{\eps}/2)} \cdot {\color{blue}(\eps/2)}\\
    &\quad + \bbP(A=1) \cdot {\color{red}(1-\sqrt{\eps}/2)} \cdot {\color{blue}(\eps/2)}
    + \bbP(A=1) \cdot {\color{red}(\sqrt{\eps}/2)} \cdot {\color{blue}(\sqrt{\eps}-\eps/2)}\\
    &= (\sqrt{\eps}/2) \cdot (\sqrt{\eps}-\eps/2) + (1-\sqrt{\eps}/2) \cdot \eps/2\\
    &= \eps
    \end{align*}
    
    \item $\alpha_{\bS} \geq \alpha$

    Since $\bS = \{A\}$ and $\eps \leq \alpha/2$, we have
    $
    \min_{a} \bbP(x \mid a) = \alpha - \eps/4 \geq \alpha/2
    $.
    
    \item $|T_{\bS,\bx,\by} - T_{\bZ,\bx,\by}| \geq \frac{\eps}{16 \alpha}$.

    \begin{align*}
    |T_{\bS,\bx,\by} - T_{\bZ,\bx,\by}|
    &= \left| \sum_a \bbP(a) \cdot \bbP(y \mid x,a) - \sum_{a,b} \bbP(a,b) \cdot \bbP(y \mid x,a,b) \right| \tag{Since $\bS = \{A\}$, $\bZ = \{A,B\}$, and by definition of $T_{\bS,\bx,\by}$ and $T_{\bZ,\bx,\by}$}\\
    &= \left| \sum_{a,b} \bbP(a) \cdot \bbP(y \mid x,a,b) \cdot \left( \bbP(b \mid a) - \bbP(b \mid x,a) \right) \right| \tag{Since $\bbP(y \mid x, a) = \sum_b \bbP(y, b \mid x, a) = \sum_b \bbP(y \mid x, a, b) \cdot \bbP(b \mid x, a)$ and $\bbP(a,b) = \bbP(a) \cdot \bbP(b \mid a)$}\\
    &= \left| \sum_{a,b} \bbP(a) \cdot \bbP(y \mid x,a,b) \cdot \frac{\bbP(b \mid a)}{\bbP(x \mid a)} \cdot \left( \bbP(x \mid a) - \bbP(x \mid a, b) \right) \right| \tag{Since $\bbP(b \mid x, a) = \frac{\bbP(b \mid a) \cdot \bbP(x \mid a, b)}{\bbP(x \mid a)}$}\\
    &= {\color{red}\bbP(A=1)} \cdot {\color{blue}\frac{\bbP(B=0 \mid A=1)}{\bbP(X=0 \mid A=1)}} \cdot \Big| \bbP(X=0 \mid A=1) - \bbP(X=0 \mid A=1, B=0) \Big| \tag{Since $Y$ is an indicator variable for whether $(A,B,X) = (1,0,0)$}\\
    &= {\color{red}\frac{\eps}{4 \alpha} \cdot \frac{\alpha - \eps/4}{1 - \sqrt{\eps}/2}} \cdot {\color{blue}\frac{1 - \sqrt{\eps}/2}{\alpha - \eps/4}} \cdot \frac{\eps}{4} \tag{From construction in \cref{fig:hardness-alpha}}\\
    &= \frac{\eps}{16 \alpha}
    \end{align*}
\end{enumerate}
\end{proof}

\subsection{Beyond approximate Markov blankets}
\label{sec:appendix-BAMBA}

\minimalsoundness*
\begin{proof}
Consider arbitrary subsets $\bS \subseteq \bA \subseteq \bV$ and $\bS' \subseteq \bA \subseteq \bV$.
Observe that
\begin{align*}
T_{\bS, \bx, \by}
&= \sum_{\bs,\bs' \setminus \bs}
\bbP(\by \mid \bx, \bs, \bs' \setminus \bs)
\cdot \bbP(\bs' \setminus \bs \mid \bx, \bs)
\cdot \bbP(\bs)
\tag{By \cref{eq:T-alternative}}
\\
&= \sum_{\bs,\bs' \setminus \bs}
\bbP(\by \mid \bx, \bs, \bs' \setminus \bs)
\cdot \bbP(\bs' \setminus \bs \mid \bs)
\cdot \bbP(\bs)
\tag{Since $\bX \ind \bS' \setminus \bS \mid \bS$}
\\
&= \sum_{\bs',\bs \setminus \bs'}
\bbP(\by \mid \bx, \bs', \bs \setminus \bs')
\cdot \bbP(\bs' \setminus \bs \mid \bs)
\cdot \bbP(\bs)
\tag{Regrouping}
\\
&= \sum_{\bs',\bs \setminus \bs'}
\bbP(\by \mid \bx, \bs')
\cdot \bbP(\bs' \setminus \bs \mid \bs)
\cdot \bbP(\bs)
\tag{Since $\bY \ind \bS \setminus \bS' \mid \bX \cup \bS'$}
\\
&=
T_{\bS',\bx,\by}
\tag{By \cref{eq:T-alternative}}
\end{align*}
\end{proof}

\minimalgivenapproximatemarkovblanketdiscovery*
\begin{proof}
Suppose the \BAMBA\ algorithm (\cref{alg:BAMBA}) terminates at some iteration $|\bS'| \in \{0, 1, \ldots, |\bA|\}$.

\paragraph{Correctness.}
If \BAMBAshort\ returns the $\eps$-Markov blanket $\bS \subseteq \bA$ (e.g.\ in Line 8), then $| T_{\bS', \bx, \by} - T_{\bA, \bx, \by} | = | T_{\bS, \bx, \by} - T_{\bA, \bx, \by} | \leq \frac{\eps}{\alpha_{\bS}} \leq \frac{2 \eps}{\alpha_{\bS}}$ by \cref{def:markov-blanket} and \cref{lem:misspecification-error}.
Suppose all calls to \ApproxCondInd\ succeed across all iterations.
Then, \cref{lem:approxcondind-guarantees} tells us that $\Delta_{\bY \ind \bS \setminus \bS' \mid \bX \cup \bS'} \leq \eps$, $\Delta_{\bX \ind \bS' \setminus \bS \mid \bS} \leq \eps$, and $|\bSigma_{\bS'}| \leq |\bSigma_{\bS}|$ whenever $\bC_k \neq \emptyset$.

For subsequent analytical purposes, let us define an intermediate term $Z_{\bx, \by}$ as follows:
\begin{equation}
\label{eq:intermediate-term}
Z_{\bx, \by}
= \sum_{\bs \cup \bs'} \frac{1}{\bbP(\bx \mid \bs)} \cdot \bbP(\bx, \bs \cup \bs') \cdot \bbP(\by \mid \bx, \bs')
\end{equation}

By triangle inequality, we have
$
\left| T_{\bS, \bx, \by} - T_{\bS', \bx, \by} \right|
= \left| T_{\bS, \bx, \by} - Z_{\bx, \by} + Z_{\bx, \by} - T_{\bS', \bx, \by} \right|
\leq \left| T_{\bS, \bx, \by} - Z_{\bx, \by} \right| + \left| Z_{\bx, \by} - T_{\bS', \bx, \by} \right|
$.
We will bound each of these terms separately.

\textbf{1. Bounding $\left| T_{\bS, \bx, \by} - Z_{\bx, \by} \right|$.}

\begin{align*}
\left| T_{\bS, \bx, \by} - Z_{\bx, \by} \right|
& = \left| \sum_{\bs \cup \bs'} \bbP(\by \mid \bx, \bs \cup \bs') \cdot \bbP(\bs' \setminus \bs \mid \bx, \bs) \cdot \bbP(\bs) - \sum_{\bs \cup \bs'} \frac{1}{\bbP(\bx \mid \bs)} \cdot \bbP(\bx, \bs \cup \bs') \cdot \bbP(\by \mid \bx, \bs') \right| \tag{By \cref{eq:T-alternative} and \cref{eq:intermediate-term}}\\
& = \left| \sum_{\bs \cup \bs'} \bbP(\by \mid \bx, \bs \cup \bs') \cdot \frac{\bbP(\bx, \bs \cup \bs')}{\bbP(\bx, \bs)} \cdot \bbP(\bs) - \sum_{\bs \cup \bs'} \frac{1}{\bbP(\bx \mid \bs)} \cdot \bbP(\bx, \bs \cup \bs') \cdot \bbP(\by \mid \bx, \bs') \right|\\
& = \left| \sum_{\bs \cup \bs'} \frac{1}{\bbP(\bx \mid \bs)} \cdot \bbP(\bx, \bs \cup \bs') \cdot \left( \bbP(\by \mid \bx, \bs \cup \bs') - \bbP(\by \mid \bx, \bs') \right) \right| \tag{Pull out common terms}\\
& \leq \sum_{\bs \cup \bs'} \frac{1}{\bbP(\bx \mid \bs)} \cdot \bbP(\bx, \bs \cup \bs') \cdot \left| \bbP(\by \mid \bx, \bs \cup \bs') - \bbP(\by \mid \bx, \bs') \right| \tag{Triangle inequality and non-negative of probabilities}\\
& \leq \frac{1}{\alpha_{\bS}} \cdot \sum_{\bs \cup \bs'} \bbP(\bx, \bs \cup \bs') \cdot \left| \bbP(\by \mid \bx, \bs \cup \bs') - \bbP(\by \mid \bx, \bs') \right| \tag{By definition of $\alpha_{\bS}$ in \cref{eq:alpha-def}}\\
& \leq \frac{1}{\alpha_{\bS}} \cdot \sum_{\by, \bx, \bs \cup \bs'} \bbP(\bx, \bs \cup \bs') \cdot \left| \bbP(\by \mid \bx, \bs \cup \bs') - \bbP(\by \mid \bx, \bs') \right| \tag{Summing over more terms}\\
& \leq \frac{\eps}{\alpha_{\bS}} \tag{Since $\Delta_{\bY \ind \bS \setminus \bS' \mid \bX \cup \bS'} \leq \eps$ and using \cref{eq:approx-cond-ind-alternative}}
\end{align*}

\textbf{2. Bounding $\left| Z_{\bx, \by} - T_{\bS', \bx, \by} \right|$.}

\begin{align*}
\left| T_{\bS', \bx, \by} - Z_{\bx, \by} \right|
& = \left| \sum_{\bs \cup \bs'} \bbP(\by \mid \bx, \bs') \cdot \bbP(\bs') \cdot \bbP(\bs \setminus \bs' \mid \bs') - \sum_{\bs \cup \bs'} \frac{1}{\bbP(\bx \mid \bs)} \cdot \bbP(\bx, \bs \cup \bs') \cdot \bbP(\by \mid \bx, \bs') \right| \tag{By \cref{eq:T-alternative} and \cref{eq:intermediate-term}}\\
& = \left| \sum_{\bs \cup \bs'} \frac{1}{\bbP(\bx \mid \bs)} \cdot \bbP(\by \mid \bx, \bs') \cdot \bbP(\bs \cup \bs') \cdot \left( \bbP(\bx \mid \bs) - \bbP(\bx \mid \bs \cup \bs') \right) \right| \tag{Pull out common terms}\\
& \leq \sum_{\bs \cup \bs'} \frac{1}{\bbP(\bx \mid \bs)} \cdot \bbP(\bs \cup \bs') \cdot \left| \bbP(\bx \mid \bs) - \bbP(\bx \mid \bs \cup \bs') \right| \tag{Triangle inequality, non-negativity of probabilities, and since $\bbP(\by \mid \bx, \bs') \leq 1$}\\
& \leq \frac{1}{\alpha_{\bS}} \cdot \sum_{\bs \cup \bs'} \bbP(\bs \cup \bs') \cdot \left| \bbP(\bx \mid \bs) - \bbP(\bx \mid \bs \cup \bs') \right| \tag{By definition of $\alpha_{\bS}$ in \cref{eq:alpha-def}}\\
& \leq \frac{\eps}{\alpha_{\bS}} \tag{Since $\Delta_{\bX \ind \bS' \setminus \bS \mid \bS} \leq \eps$ and using \cref{eq:approx-cond-ind-alternative}}
\end{align*}

\textbf{Putting together.}

We see that
\[
\left| T_{\bS, \bx, \by} - T_{\bS', \bx, \by} \right|
\leq \left| T_{\bS, \bx, \by} - Z_{\bx, \by} \right| + \left| Z_{\bx, \by} - T_{\bS', \bx, \by} \right|
\leq \frac{\eps}{\alpha_{\bS}} + \frac{\eps}{\alpha_{\bS}}
= \frac{2 \eps}{\alpha_{\bS}}
\]

\paragraph{Failure rate.}
Note that there are at most $\binom{|\bA|}{k}$ possible candidate sets in $\bC_k$ for each $k \in \{0, 1, \ldots, |\bA|\}$.
Since we invoked two calls to $\ApproxCondInd$ in iteration $k$, each with failure parameter $\delta w_k / 2$, union bound tells us that the probability of \emph{any} call failing across all calls is at most
\[
\sum_{k=0}^{|\bS'|} 2 \cdot \frac{\delta w_k}{2} \cdot \binom{|\bA|}{k}
= \sum_{k=0}^{|\bS'|} \delta \cdot \frac{1}{|\bA| \cdot \binom{|\bA|}{k}} \cdot \binom{|\bA|}{k}
= \sum_{k=0}^{|\bS'|} \frac{\delta}{|\bA|}
\leq \frac{\delta \cdot |\bS|}{|\bA|}
\leq \delta
\]

\paragraph{Sample complexity.}
Since we are using union bound to bound our overall failure probability, we can reuse samples in all our calls to \ApproxCondInd.
Thus, the total sample complexity is attributed to the final call when $k = |\bS'|$.
Such an invocation of $\ApproxCondInd$ uses $\wt{\cO} \left( \frac{1}{\eps^2} \cdot \sqrt{|\bSigma_{\bX}| \cdot |\bSigma_{\bY}| \cdot |\bSigma_{\bA \setminus \bS'}| \cdot |\bSigma_{\bS'}|} \cdot \log \frac{1}{\delta w_k} \right)$ samples according to \cref{lem:approxcondind-guarantees} and $w_k = \left( |\bA| \cdot \binom{|\bA|}{k} \right)^{-1}$, so the total number of samples used is at most
\[
\wt{\cO} \left( \frac{1}{\eps^2} \cdot \sqrt{|\bSigma_{\bX}| \cdot |\bSigma_{\bY}| \cdot |\bSigma_{\bA \setminus \bS'}| \cdot |\bSigma_{\bS'}|} \cdot \log \frac{1}{\delta w_k} \right)
\subseteq \wt{\cO} \left( \frac{|\bS'|}{\eps^2} \cdot \sqrt{|\bSigma_{\bX}| \cdot |\bSigma_{\bY}| \cdot |\bSigma_{\bA}|} \cdot \log \frac{1}{\delta} \right)
\]
We omit $\log |\bA|$ within $\wt{\cO}(\cdot)$ because $|\bA| \leq |\bSigma_{\bA}|$.
\end{proof}

\subsection{Deferred probabilistic manipulations}
\label{sec:appendix-probabilistic-manipulations}

In the proof of \cref{lem:minimal-soundness} and \cref{thm:BAMBA}, we skipped the full derivation of
\[
\sum_{\bs} \bbP(\bs) \cdot \bbP(\by \mid \bx, \bs) \cdot \sum_{\bs' \setminus \bs} \bbP(\bs' \setminus \bs \mid \by, \bx, \bs)
= \sum_{\bs \cup \bs'} \bbP(\bs \cup \bs') \cdot \bbP(\by \mid \bx, \bs \cup \bs') \cdot \frac{\bbP(\bx \mid \bs \cup \bs')}{\bbP(\bx \mid \bs)}
\]
It is obtained via a series of standard probabilistic manipulations:
\begin{align*}
&\; \sum_{\bs} \bbP(\bs) \cdot \bbP(\by \mid \bx, \bs) \cdot \sum_{\bs' \setminus \bs} \bbP(\bs' \setminus \bs \mid \by, \bx, \bs)\\
= &\; \sum_{\bs \cup \bs'} \bbP(\bs) \cdot \bbP(\by, \bs' \setminus \bs \mid \bx, \bs) \tag{Since $\bbP(\by \mid \bx, \bs) \cdot \bbP(\bs' \setminus \bs \mid \by, \bx, \bs) = \bbP(\by, \bs' \setminus \bs \mid \bx, \bs)$}\\
= &\; \sum_{\bs \cup \bs'} \frac{\bbP(\bs)}{\bbP(\bs \mid \bx)} \cdot \bbP(\by, \bs \cup \bs' \mid \bx) \tag{Since $\bbP(\by, \bs \cup \bs' \mid \bx) = \bbP(\bs \mid \bx) \cdot \bbP(\by, \bs' \setminus \bs \mid \bx, \bs)$}\\
= &\; \sum_{\bs \cup \bs'} \bbP(\by \mid \bx, \bs \cup \bs') \cdot \bbP(\bs \cup \bs' \mid \bx) \cdot \frac{\bbP(\bs)}{\bbP(\bs \mid \bx)} \tag{Since $\bbP(\by, \bs \cup \bs' \mid \bx) = \bbP(\bs \cup \bs' \mid \bx) \cdot \bbP(\by \mid \bx, \bs \cup \bs')$}\\
= &\; \sum_{\bs \cup \bs'} \bbP(\by \mid \bx, \bs \cup \bs') \cdot \bbP(\bs \cup \bs') \cdot \frac{\bbP(\bs) \cdot \bbP(\bx \mid \bs \cup \bs')}{\bbP(\bx) \cdot \bbP(\bs \mid \bx)} \tag{Since $\bbP(\bs \cup \bs' \mid \bx) = \frac{\bbP(\bx, \bs \cup \bs')}{\bbP(\bx)} = \frac{\bbP(\bs \cup \bs') \cdot \bbP(\bx \mid \bs \cup \bs')}{\bbP(\bx)}$}\\
= &\; \sum_{\bs \cup \bs'} \bbP(\by \mid \bx, \bs \cup \bs') \cdot \bbP(\bs \cup \bs') \cdot \frac{\bbP(\bx \mid \bs \cup \bs')}{\bbP(\bx \mid \bs)} \tag{Since $\bbP(\bx \mid \bs) = \frac{\bbP(\bx) \cdot \bbP(\bs \mid \bx)}{\bbP(\bs)}$}\\
= &\; \sum_{\bs \cup \bs'} \bbP(\bs \cup \bs') \cdot \bbP(\by \mid \bx, \bs \cup \bs') \cdot \frac{\bbP(\bx \mid \bs \cup \bs')}{\bbP(\bx \mid \bs)} \tag{Swap positions of $\bbP(\bs \cup \bs')$ and $\bbP(\by \mid \bx, \bs \cup \bs')$}
\end{align*}

In the proof of \cref{thm:BAMBA}, we also skipped the derivations of
\[
\sum_{\bs \cup \bs'} \bbP(\bx \mid \bs \cup \bs') \cdot \bbP(\bs \cup \bs') \cdot \left| \bbP(\by \mid \bx, \bs') - \bbP(\by \mid \bx, \bs \cup \bs') \right| \leq \eps
\quad
\text{and}
\quad
\sum_{\bs \cup \bs'} \bbP(\bs \cup \bs') \cdot \left| \bbP(\bx \mid \bs) - \bbP(\bx \mid \bs \cup \bs') \right| \leq \eps
\]
under the assumptions of $\Delta_{\bY \ind \bS \setminus \bS' \mid \bX \cup \bS'} \leq \eps$ and $\Delta_{\bX \ind \bS' \setminus \bS \mid \bS} \leq \eps$ respectively.
We derive them below:

\begin{align*}
&\; \sum_{\bs \cup \bs'} \bbP(\bx \mid \bs \cup \bs') \cdot \bbP(\bs \cup \bs') \cdot \left| \bbP(\by \mid \bx, \bs') - \bbP(\by \mid \bx, \bs \cup \bs') \right|\\
= &\; \sum_{\bs \cup \bs'} \bbP(\bx, \bs \cup \bs') \cdot \left| \bbP(\by \mid \bx, \bs') - \bbP(\by \mid \bx, \bs \cup \bs') \right| \tag{Since $\bX \cap (\bS \cup \bS') = \emptyset$}\\
= &\; \sum_{\bs \cup \bs'} \bbP(\bx, \bs') \cdot \bbP(\bs \setminus \bs' \mid \bx, \bs') \cdot \left| \bbP(\by \mid \bx, \bs') - \bbP(\by \mid \bx, \bs \cup \bs') \right|\\
= &\; \sum_{\bs \cup \bs'} \bbP(\bx, \bs') \cdot \left| \bbP(\by \mid \bx, \bs') \cdot \bbP(\bs \setminus \bs' \mid \bx, \bs') - \bbP(\by \cup (\bs \setminus \bs') \mid \bx, \bs') \right|\\
\leq &\; \sum_{\by, \bx, \bs \cup \bs'} \bbP(\bx, \bs') \cdot \left| \bbP(\by \mid \bx, \bs') \cdot \bbP(\bs \setminus \bs' \mid \bx, \bs') - \bbP(\by \cup (\bs \setminus \bs') \mid \bx, \bs') \right| \tag{Since we sum over all values of $\bSigma_{\bX}$ and $\bSigma_{\bY}$}\\
\leq &\; \eps \tag{when $\Delta_{\bY \ind \bS \setminus \bS' \mid \bX \cup \bS'} \leq \eps$}
\end{align*}

\begin{align*}
&\; \sum_{\bs \cup \bs'} \bbP(\bs \cup \bs') \cdot \left| \bbP(\bx \mid \bs) - \bbP(\bx \mid \bs \cup \bs') \right|\\
= &\; \sum_{\bs \cup \bs'} \bbP(\bs) \cdot \bbP(\bs' \setminus \bs \mid \bs) \cdot \left| \bbP(\bx \mid \bs) - \bbP(\bx \cup (\bs' \setminus \bs) \mid \bs) \right|\\
= &\; \sum_{\bs \cup \bs'} \bbP(\bs) \cdot \left| \bbP(\bx \mid \bs) \cdot \bbP(\bs' \setminus \bs \mid \bs) - \bbP(\bx \cup (\bs' \setminus \bs) \mid \bs) \right|\\
\leq &\; \sum_{\bx, \bs \cup \bs'} \bbP(\bs) \cdot \left| \bbP(\bx \mid \bs) \cdot \bbP(\bs' \setminus \bs \mid \bs) - \bbP(\bx \cup (\bs' \setminus \bs) \mid \bs) \right| \tag{Since we sum over all values of $\bSigma_{\bX}$}\\
\leq &\; \eps \tag{when $\Delta_{\bX \ind \bS' \setminus \bS \mid \bS} \leq \eps$}
\end{align*}

\subsection{Deferred derivations for hardness proof}
\label{sec:appendix-hardness}

In the proof of \cref{lem:misspecification-error-lower-bound}, we argued that the distribution $\bbP$ described in \cref{fig:hardness-alpha} has the following well-defined conditional probabilities:
\begin{center}
\begin{tabular}{cc|cccc}
\toprule
$a$ & $b$ & $\bbP(b \mid a)$ & $\bbP(X = 0 \mid a,b)$ & $\bbP(X = 0 \mid a)$ & $\sum_x | \bbP(x \mid a,b) - \bbP(x \mid a)|$\\
\midrule
$0$ & $0$ & $\sqrt{\eps}/2$ & $1 - \alpha + \sqrt{\eps}/2$ & $1 - \alpha + \eps/4$ & $\sqrt{\eps} - \eps/2$\\
$0$ & $1$ & $1-\sqrt{\eps}/2$ & $1 - \alpha$ & $1 - \alpha + \eps/4$ & $\eps/2$\\
$1$ & $0$ & $1-\sqrt{\eps}/2$ & $\alpha$ & $\alpha - \eps/4$ & $\eps/2$\\
$1$ & $1$ & $\sqrt{\eps}/2$ & $\alpha - \sqrt{\eps}/2$ & $\alpha - \eps/4$ & $\sqrt{\eps} - \eps/2$\\
\bottomrule
\end{tabular}
\end{center}

For convenience, we produce \cref{fig:hardness-alpha} below.

\begin{figure}[htb]
\centering
\resizebox{0.9\linewidth}{!}{
\begin{tikzpicture}
\node[] at (1,0.75) {$\cG$};
\node[draw, thick, circle] at (0,0) (A) {$A$};
\node[draw, thick, circle] at (2,0) (B) {$B$};
\node[draw, thick, circle] at (0,-2) (X) {$X$};
\node[draw, thick, circle] at (2,-2) (Y) {$Y$};
\draw[thick, -Stealth] (A) -- (B);
\draw[thick, -Stealth] (A) -- (X);
\draw[thick, -Stealth] (A) -- (Y);
\draw[thick, -Stealth] (B) -- (X);
\draw[thick, -Stealth] (B) -- (Y);
\draw[thick, -Stealth] (X) -- (Y);

\node[] at (6,-0.75) {\begin{tabular}{l}
$A = \begin{cases}
% 1 & \text{w.p.\ $\frac{\eps}{4 \alpha} \cdot \frac{\alpha - \eps/4}{(1 - \sqrt{\eps}/2) \cdot \eps / 4}$}\\
1 & \text{w.p.\ $\frac{\eps}{4 \alpha} \cdot \frac{\alpha - \eps/4}{1 - \sqrt{\eps}/2}$}\\
0 & \text{else}
\end{cases}$
\\
\\
$B = \begin{cases}
1 - A & \text{w.p.\ $1 - \sqrt{\eps}$}\\
0 & \text{w.p.\ $\sqrt{\eps}/2$}\\
1 & \text{w.p.\ $\sqrt{\eps}/2$}
\end{cases}$
\end{tabular}};

\node[] at (11,-0.75) {\begin{tabular}{l}
$X = \begin{cases}
A & \text{w.p.\ $1 - \alpha$}\\
1 - A & \text{w.p.\ $\alpha - \sqrt{\eps}/2$}\\
B & \text{w.p.\ $\sqrt{\eps}/2$}
\end{cases}$
\\
\\
$Y = \begin{cases}
1 & \text{if $X = 0, A = 1, B = 0$}\\
0 & \text{else}
\end{cases}$
\end{tabular}};
\end{tikzpicture}
}
\caption{Reproduced: Probability distribution $\bbP$ defined over 4 binary variables $\{A, B, X, Y\}$ in a topological ordering of $A \prec B \prec X \prec Y$ with parameters $\eps$ and $\alpha$, where $0 < \sqrt{\eps} \leq \alpha \leq 1/2$.}
\label{fig:hardness-alpha-reproduced}
\end{figure}
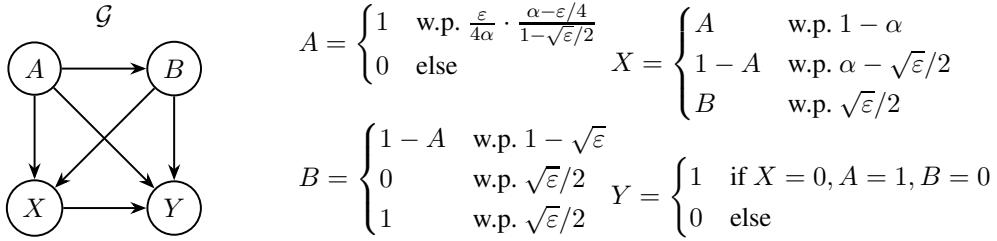

We first check that all the (conditional) probabilities of $\bbP$ are well-defined.
Since $0 < \sqrt{\eps} \leq \alpha \leq 1/2$, the only non-straightforward term to verify is $\bbP(A=1)$.
Observe that
\[
\frac{\eps}{4 \alpha} \cdot \frac{\alpha - \eps/4}{1 - \sqrt{\eps}/2} \leq 1
\iff
\eps \cdot (\alpha - \eps/4) \leq 4 \alpha \cdot (1 - \sqrt{\eps}/2)
\iff
2 \alpha \sqrt{\eps} + \alpha \eps - \eps^2 / 4 \leq 4 \alpha
\]
which is true as $0 < \eps < \sqrt{\eps} < \alpha \leq 1$ implies
$
2 \alpha \sqrt{\eps} +  \alpha \eps - \eps^2 / 4
\leq 3 \alpha \sqrt{\eps}
\leq 3 \alpha
\leq 4 \alpha
$.
Therefore, $0 \leq \bbP(A = 1) \leq 1$.

We now proceed to verify the conditional probabilities shown in the table above.
For instance,
\begin{align*}
&\; \bbP(X = 0 \mid A = 0)\\
= &\; \bbP(B = 0 \mid A = 0) \cdot \bbP(X = 0 \mid A = 0, B = 0) + \bbP(B = 1 \mid A = 0) \cdot \bbP(X = 0 \mid A = 0, B = 1)\\
= &\; (\sqrt{\eps}/2) \cdot (1 - \alpha + \sqrt{\eps}/2) + (1-\sqrt{\eps}/2) \cdot (1 - \alpha)\\
= &\; 1 - \alpha + \eps/4
\end{align*}
and
\begin{align*}
&\; \bbP(X = 0 \mid A = 1)\\
= &\; \bbP(B = 0 \mid A = 1) \cdot \bbP(X = 0 \mid A = 1, B = 0) + \bbP(B = 1 \mid A = 1) \cdot \bbP(X = 0 \mid A = 1, B = 1)\\
= &\; (1 - \sqrt{\eps}/2) \cdot \alpha + (\sqrt{\eps}/2) \cdot (\alpha - \sqrt{\eps}/2)\\
= &\; \alpha - \eps/4\\
= &\; 1 - \bbP(X = 0 \mid A = 0)
\end{align*}

The detailed workings for $\sum_x | \bbP(x \mid a,b) - \bbP(x \mid a)|$ for different values of $a,b \in \{0,1\}$ are as follows:

\textbf{When $A = 0$ and $B = 0$:}

\begin{align*}
&\; \sum_x | \bbP(x \mid a,b) - \bbP(x \mid a)|\\
= &\; \left| \bbP(X=0 \mid A=0, B=0) - \bbP(X=0 \mid A=0) \right| + \left| \bbP(X=1 \mid A=0, B=0) - \bbP(X=1 \mid A=0) \right|\\
= &\; \left| (1 - \alpha + \sqrt{\eps}/2) - (1 - \alpha + \eps/4) \right| + \left| (\alpha - \sqrt{\eps}/2) - (\alpha - \eps/4) \right|\\
= &\; 2 \left( \sqrt{\eps}/2 - \eps/4 \right)\\
= &\; \sqrt{\eps} - \eps/2
\end{align*}

\textbf{When $A = 0$ and $B = 1$:}

\begin{align*}
&\; \sum_x | \bbP(x \mid a,b) - \bbP(x \mid a)|\\
= &\; \left| \bbP(X=0 \mid A=0, B=1) - \bbP(X=0 \mid A=0) \right| + \left| \bbP(X=1 \mid A=0, B=1) - \bbP(X=1 \mid A=0) \right|\\
= &\; \left| (1 - \alpha) - (1 - \alpha + \eps/4) \right| + \left| (\alpha) - (\alpha - \eps/4) \right|\\
= &\; 2 \left( \eps/4 \right)\\
= &\; \eps/2
\end{align*}

\textbf{When $A = 1$ and $B = 0$:}

\begin{align*}
&\; \sum_x | \bbP(x \mid a,b) - \bbP(x \mid a)|\\
= &\; \left| \bbP(X=0 \mid A=1, B=0) - \bbP(X=0 \mid A=1) \right| + \left| \bbP(X=1 \mid A=1, B=0) - \bbP(X=1 \mid A=1) \right|\\
= &\; \left| (\alpha) - (\alpha - \eps/4) \right| + \left| (1 - \alpha) - (1 - \alpha + \eps/4) \right|\\
= &\; 2 \left( \eps/4 \right)\\
= &\; \eps/2
\end{align*}

\textbf{When $A = 1$ and $B = 1$:}

\begin{align*}
&\; \sum_x | \bbP(x \mid a,b) - \bbP(x \mid a)|\\
= &\; \left| \bbP(X=0 \mid A=1, B=1) - \bbP(X=0 \mid A=1) \right| + \left| \bbP(X=1 \mid A=1, B=1) - \bbP(X=1 \mid A=1) \right|\\
= &\; \left| (\alpha - \sqrt{\eps}/2) - (\alpha - \eps/4) \right| + \left| (1 - \alpha + \sqrt{\eps}/2) - (1 - \alpha + \eps/4) \right|\\
= &\; 2 \left( \sqrt{\eps}/2 - \eps/4 \right)\\
= &\; \sqrt{\eps} - \eps/2
\end{align*}

\subsection{Estimating causal effects using AMBA and BAMBA}
\label{sec:appendix-putting-together}

If we re-express the results of \cref{thm:estimation-error}, \cref{thm:AMBA} and \cref{thm:BAMBA} in terms of an upper bound on error for a fixed number of samples $n$, we get the following three corollaries.

\begin{mycorollary}[Estimation corollary]
\label{cor:estimation-error}
Suppose we are given (1) failure tolerance $\delta > 0$, (2) $n$ i.i.d.\ samples from distribution $\bbP(\bV)$, (3) a subset $\bA \subseteq \bV$ with $\alpha_{\bA} = \max_{\ba \in \bSigma_{\bA}} \bbP(\bx \mid \ba)$.
Then, there is an algorithm that produces an estimate $\hatT_{\bA, \bx, \by}$ such that $\Pr(|\hatT_{\bA, \bx, \by} - T_{\bA, \bx, \by}| \leq \eps) \geq 1 - \delta$ for some error term
\[
\eps \in \wt{\cO} \left( \frac{|\bSigma_{\bA}|}{n \alpha_{\bA}} + \frac{1}{\sqrt{n \alpha_{\bA}}} + \sqrt{\frac{|\bSigma_{\bA}|}{n}} \right)
\]
\end{mycorollary}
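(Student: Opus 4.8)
The plan is to invert the sample-complexity bound of \cref{thm:estimation-error}, exploiting the fact that it is monotonically decreasing in the estimation tolerance. Write the bound of \cref{thm:estimation-error} as $n^\star(\eps,\delta) = C\cdot\bigl(\tfrac{|\bSigma_\bA|}{\eps\alpha_\bA} + \tfrac{1}{\eps^2\alpha_\bA} + \tfrac{|\bSigma_\bA|}{\eps^2}\bigr)\log\tfrac1\delta$, where $C$ absorbs the hidden polylogarithmic factors implicit in $\wt{\cO}(\cdot)$; the theorem guarantees that whenever $n\ge n^\star(\eps,\delta)$, the estimator $\hatT_{\bA,\bx,\by}$ satisfies $\Pr\bigl(|\hatT_{\bA,\bx,\by} - T_{\bA,\bx,\by}|\le\eps\bigr)\ge 1-\delta$. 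Hence it suffices to exhibit a single $\eps$ lying in the claimed $\wt{\cO}$-class with $n^\star(\eps,\delta)\le n$ and then invoke \cref{thm:estimation-error} with that $\eps$.

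First I would handle the three summands of $n^\star$ one at a time: it is enough to force each of them to be at most $n/3$. Solving $\tfrac{C|\bSigma_\bA|}{\eps\alpha_\bA}\log\tfrac1\delta \le n/3$ gives the threshold $\eps_1 = \Theta\bigl(\tfrac{|\bSigma_\bA|\log(1/\delta)}{n\alpha_\bA}\bigr)$; solving $\tfrac{C}{\eps^2\alpha_\bA}\log\tfrac1\delta \le n/3$ gives $\eps_2 = \Theta\bigl(\sqrt{\tfrac{\log(1/\delta)}{n\alpha_\bA}}\bigr)$; and solving $\tfrac{C|\bSigma_\bA|}{\eps^2}\log\tfrac1\delta \le n/3$ gives $\eps_3 = \Theta\bigl(\sqrt{\tfrac{|\bSigma_\bA|\log(1/\delta)}{n}}\bigr)$. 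Setting $\eps = \max\{\eps_1,\eps_2,\eps_3\}$ (equivalently, up to a factor of three, their sum) simultaneously satisfies all three inequalities, so $n^\star(\eps,\delta)\le n$ as required; since $\max\{\eps_1,\eps_2,\eps_3\} \in \wt{\cO}\bigl(\tfrac{|\bSigma_\bA|}{n\alpha_\bA} + \tfrac{1}{\sqrt{n\alpha_\bA}} + \sqrt{\tfrac{|\bSigma_\bA|}{n}}\bigr)$, the corollary follows.

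As the authors themselves note, there is essentially no technical content here: this is pure bookkeeping of the kind used to re-express any sample-complexity bound as an error bound. The only points worth a moment's attention are (i) checking that inverting terms of the form $1/\eps$ and $1/\eps^2$ leaves the $\log\tfrac1\delta$ factor untouched, so the polylogarithmic overhead is identical in the two formulations; and (ii) reading $\alpha_\bA$ as $\min_{\ba\in\bSigma_\bA}\bbP(\bx\mid\ba)$ per \cref{eq:alpha-def} (the ``$\max$'' appearing in the corollary statement should be ``$\min$''). I do not anticipate any genuine obstacle; the monotone-inversion step is the only place where one could conceivably be careless, and it is routine.
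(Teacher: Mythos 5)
Your proposal is correct and follows essentially the same route as the paper: the paper's own proof is a one-line inversion of the sample-complexity bound of \cref{thm:estimation-error}, and your term-by-term solve followed by taking the maximum is just that inversion spelled out explicitly (your observation that the $\max$ in the corollary statement should be $\min$, matching \cref{eq:alpha-def}, is also correct).
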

\begin{proof}
From \cref{thm:estimation-error}, we know that $\wt{\cO} \left( \left( \frac{|\bSigma_{\bA}|}{\eps \alpha_{\bA}} + \frac{1}{\eps^2 \alpha_{\bA}} + \frac{|\bSigma_{\bA}|}{\eps^2} \right) \cdot \log \left( \frac{1}{\delta} \right) \right)$ samples suffice to produce an estimate $\hatT_{\bA, \bx, \by}$ such that $\Pr(|\hatT_{\bA, \bx, \by} - T_{\bA, \bx, \by}| \leq \eps) \geq 1 - \delta$.
Ignoring the logarithmic terms and constant factors, the result follows by re-expressing $n \leq \frac{|\bSigma_{\bA}|}{\eps \alpha_{\bA}} + \frac{1}{\eps^2 \alpha_{\bA}} + \frac{|\bSigma_{\bA}|}{\eps^2}$ in terms of $\eps$.
\end{proof}

\begin{mycorollary}[\AMBAshort\ corollary]
\label{cor:AMBA}
Suppose we are given (1) failure tolerance $\delta > 0$, (2) $n$ i.i.d.\ samples from distribution $\bbP(\bV)$, and (3) an arbitrary subset $\bA \subseteq \VminusXY$.
Then, there is an algorithm that produces a subset $\bS \subseteq \bA$ such that $\Pr \left( \Delta_{\bX \ind \bA \setminus \bS \mid \bS} > \eps \right) \geq 1 - \delta$ and $\Pr \left( | T_{\bS, \bx, \by} - T_{\bA, \bx, \by} | \leq \eps \right) \geq 1 - \delta$ for some error term
\[
\eps \in \wt{\cO} \left( \frac{1}{\alpha_{\bS}} \cdot \sqrt{\frac{|\bS|}{n}} \cdot \left( \left| \bSigma_{\bX} \right| \cdot \left| \bSigma_{\bA} \right| \right)^{\frac{1}{4}} \right)
\]
\end{mycorollary}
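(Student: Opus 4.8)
The plan is to invert the sample-complexity bound of \cref{thm:AMBA}, exactly as in the proof of \cref{cor:estimation-error}. Recall that the \AMBAshort\ algorithm of \cref{thm:AMBA}, run with a tolerance $\eps' > 0$ and failure parameter $\delta > 0$, outputs a subset $\bS \subseteq \bA$ using $\wt{\cO}\left( \frac{|\bS|}{(\eps')^2} \cdot \sqrt{|\bSigma_{\bX}| \cdot |\bSigma_{\bA}|} \cdot \log \frac{1}{\delta} \right)$ samples and guarantees both $\Pr\left( \Delta_{\bX \ind \bA \setminus \bS \mid \bS} \leq \eps' \right) \geq 1 - \delta$ (the ``YES'' guarantee of \cref{lem:approxcondind-guarantees}) and $\Pr\left( |T_{\bS,\bx,\by} - T_{\bA,\bx,\by}| \leq \tfrac{\eps'}{\alpha_{\bS}} \right) \geq 1 - \delta$, where $\bS$ denotes the (random) output of the run.

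First I would choose the tolerance that exhausts the sample budget $n$. Dropping the logarithmic and constant factors absorbed by $\wt{\cO}(\cdot)$, the requirement $\frac{|\bS|}{(\eps')^2}\sqrt{|\bSigma_{\bX}| \cdot |\bSigma_{\bA}|} \leq n$ rearranges to $(\eps')^2 \geq \frac{|\bS|}{n}\sqrt{|\bSigma_{\bX}| \cdot |\bSigma_{\bA}|}$, so taking $\eps'$ as small as allowed gives $\eps' \in \wt{\cO}\left( \sqrt{\tfrac{|\bS|}{n}} \cdot \bigl( |\bSigma_{\bX}| \cdot |\bSigma_{\bA}| \bigr)^{1/4} \right)$. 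Since $\alpha_{\bS} = \min_{\bs \in \bSigma_{\bS}} \bbP(\bx \mid \bs) \in (0,1]$, putting
\[
\eps := \frac{\eps'}{\alpha_{\bS}} \in \wt{\cO}\left( \frac{1}{\alpha_{\bS}} \cdot \sqrt{\frac{|\bS|}{n}} \cdot \bigl( |\bSigma_{\bX}| \cdot |\bSigma_{\bA}| \bigr)^{1/4} \right)
\]
recovers the claimed error term and satisfies $\eps \geq \eps'$. The misspecification guarantee of \cref{thm:AMBA} is then precisely $\Pr(|T_{\bS,\bx,\by} - T_{\bA,\bx,\by}| \leq \eps) \geq 1-\delta$, and the conditional-independence guarantee transfers since $\{\Delta_{\bX \ind \bA \setminus \bS \mid \bS} \leq \eps'\} \subseteq \{\Delta_{\bX \ind \bA \setminus \bS \mid \bS} \leq \eps\}$ when $\eps \geq \eps'$, giving $\Pr(\Delta_{\bX \ind \bA \setminus \bS \mid \bS} \leq \eps) \geq 1-\delta$ as well.

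I do not expect a genuine obstacle here: the corollary is a pure re-parametrization of \cref{thm:AMBA}. The one subtlety worth making explicit in the write-up is the mild self-reference --- the output size $|\bS|$ and its positivity parameter $\alpha_{\bS}$ appear inside the very error bound that governs the run, whereas in \cref{cor:estimation-error} the analogous quantities $|\bSigma_{\bA}|$ and $\alpha_{\bA}$ belong to a pre-specified input set. This is harmless for an asymptotic statement as long as $\eps$ is presented as ``some error term'' of the stated order rather than a quantity fixed in advance, so I would word the corollary's proof accordingly.
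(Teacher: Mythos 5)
Your proposal is correct and matches the paper's own proof of \cref{cor:AMBA}: the paper likewise runs \AMBAshort\ with internal tolerance $\eps' = \eps\alpha_{\bS}$, solves $n = \frac{|\bS|}{(\eps')^2}\sqrt{|\bSigma_{\bX}|\cdot|\bSigma_{\bA}|}$ for $\eps'$, and reports the misspecification bound $\eps'/\alpha_{\bS}$. Your remarks on $\eps \geq \eps'$ and on the self-referential appearance of $|\bS|$ and $\alpha_{\bS}$ in the bound are consistent with how the paper treats these quantities.
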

\begin{proof}
From \cref{thm:AMBA}, we know that $\wt{\cO} \left( \frac{|\bS|}{\eps^2} \cdot \sqrt{|\bSigma_{\bX}| \cdot |\bSigma_{\bA}|} \cdot \log \frac{1}{\delta} \right)$ samples suffice to produce a subset $\bS \subseteq \bA$ such that $\Pr \left( \Delta_{\bX \ind \bA \setminus \bS \mid \bS} > \eps \right) \geq 1 - \delta$ and $\Pr \left( | T_{\bS, \bx, \by} - T_{\bA, \bx, \by} | \leq \frac{\eps}{\alpha_{\bS}} \right) \geq 1 - \delta$.
Ignoring the logarithmic terms and constant factors, the result follows by re-expressing $n = \frac{|\bS|}{(\eps')^2} \cdot \sqrt{|\bSigma_{\bX}| \cdot |\bSigma_{\bA}|}$ in terms of $\eps' = \eps \alpha_{\bS} \leq \eps$.
\end{proof}

\begin{mycorollary}[\BAMBAshort\ corollary]
\label{cor:BAMBA}
Suppose we are given (1) failure tolerance $\delta > 0$, (2) $n$ i.i.d.\ samples from distribution $\bbP(\bV)$, (3) an arbitrary subset $\bA \subseteq \VminusXY$, and (4) an $\eps$-Markov blanket $\bS \subseteq \bA$.
Then, there is an algorithm that produces a subset $\bS' \subseteq \bA$ such that $|\bSigma_{\bS'}| \leq |\bSigma_{\bS}|$ and $\Pr \left( | T_{\bS', \bx, \by} - T_{\bA, \bx, \by} | \leq \eps \right) \geq 1 - \delta$ for some error term
\[
\eps \in \wt{\cO} \left( \frac{1}{\alpha_{\bS}} \cdot \sqrt{\frac{|\bS'|}{n}} \cdot \left( \left| \bSigma_{\bX} \right| \cdot \left| \bSigma_{\bY} \right| \cdot \left| \bSigma_{\bA} \right| \right)^{\frac{1}{4}} \right)
\]
\end{mycorollary}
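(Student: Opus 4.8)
The plan is to mirror the proof of \cref{cor:AMBA}: simply invert the sample-complexity bound of \cref{thm:BAMBA} to solve for the accuracy $\eps$ achievable with a fixed budget of $n$ samples, while tracking the $\frac{2\eps}{\alpha_{\bS}}$ misspecification factor that \cref{thm:BAMBA} produces.

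First I would invoke \cref{thm:BAMBA} with its own tolerance parameter, call it $\eta > 0$: given the $\eps$-Markov blanket $\bS \subseteq \bA$, it guarantees that $\wt{\cO}\left( \frac{|\bS'|}{\eta^2} \cdot \sqrt{|\bSigma_{\bX}| \cdot |\bSigma_{\bY}| \cdot |\bSigma_{\bA}|} \cdot \log\frac{1}{\delta} \right)$ samples suffice to output a set $\bS' \subseteq \bA$ with $|\bSigma_{\bS'}| \leq |\bSigma_{\bS}|$ and $\Pr\left( |T_{\bS',\bx,\by} - T_{\bA,\bx,\by}| \leq \frac{2\eta}{\alpha_{\bS}} \right) \geq 1-\delta$. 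Setting the target accuracy to $\eps := \frac{2\eta}{\alpha_{\bS}}$, equivalently $\eta = \frac{\eps\,\alpha_{\bS}}{2} \leq \eps$, and equating the displayed sample count to $n$ (discarding logarithmic factors and absolute constants inside $\wt{\cO}(\cdot)$), I would solve for $\eps$. From $n$ of order $\frac{|\bS'|}{\eta^2}\sqrt{|\bSigma_{\bX}|\cdot|\bSigma_{\bY}|\cdot|\bSigma_{\bA}|}$ one gets $\eta$ of order $\sqrt{\frac{|\bS'|}{n}}\cdot\left(|\bSigma_{\bX}|\cdot|\bSigma_{\bY}|\cdot|\bSigma_{\bA}|\right)^{1/4}$, and hence $\eps = \frac{2\eta}{\alpha_{\bS}} \in \wt{\cO}\!\left( \frac{1}{\alpha_{\bS}} \cdot \sqrt{\frac{|\bS'|}{n}} \cdot \left(|\bSigma_{\bX}|\cdot|\bSigma_{\bY}|\cdot|\bSigma_{\bA}|\right)^{1/4} \right)$, which is exactly the claimed bound; the $1-\delta$ success probability carries over verbatim since $\log\frac{1}{\delta}$ is absorbed into $\wt{\cO}(\cdot)$.

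There is essentially no real obstacle here — this is a routine reparametrization. The only points warranting a sentence of care are: (i) the output-dependent quantity $|\bS'|$ appearing on the right-hand side is legitimate in the same way as $|\bS|$ in \cref{cor:AMBA}, being controlled by the screening-set parameter $k'$ from \cref{thm:BAMBA}; (ii) the denominator is $\alpha_{\bS}$ (the positivity constant of the \emph{input} Markov blanket) rather than $\alpha_{\bS'}$, because that is how the misspecification guarantee of \cref{thm:BAMBA} is phrased; and (iii) the hypothesis of \cref{cor:BAMBA} already supplies the $\eps$-Markov blanket $\bS$, so no additional call to \AMBAshort\ is needed inside this corollary.
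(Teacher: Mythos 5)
Your proposal is correct and follows essentially the same route as the paper: the paper likewise invokes \cref{thm:BAMBA} with an internal tolerance $\eps' = \eps\,\alpha_{\bS}$ (your $\eta$, up to the constant $2$ absorbed by $\wt{\cO}(\cdot)$), sets $n = \frac{|\bS'|}{(\eps')^2}\sqrt{|\bSigma_{\bX}|\cdot|\bSigma_{\bY}|\cdot|\bSigma_{\bA}|}$, and solves for $\eps$. Your three clarifying remarks (on $|\bS'|$, on $\alpha_{\bS}$ versus $\alpha_{\bS'}$, and on $\bS$ being supplied as input) are all consistent with how the paper uses this corollary.
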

\begin{proof}
From \cref{thm:BAMBA}, we know that $\wt{\cO} \left( \frac{|\bS'|}{\eps^2} \cdot \sqrt{|\bSigma_{\bX}| \cdot |\bSigma_{\bY}| \cdot |\bSigma_{\bA}|} \cdot \log \frac{1}{\delta} \right)$ samples suffice to produce a subset $\bS' \subseteq \bA$ such that $|\bSigma_{\bS'}| \leq |\bSigma_{\bS}|$ and $\Pr \left( | T_{\bS', \bx, \by} - T_{\bA, \bx, \by} | \leq \frac{\eps}{\alpha_{\bS}} \right) \geq 1 - \delta$.
Ignoring the logarithmic terms and constant factors, the result follows by re-expressing $n = \frac{|\bS'|}{(\eps')^2} \cdot \sqrt{|\bSigma_{\bX}| \cdot |\bSigma_{\bY}| \cdot |\bSigma_{\bA}|}$ in terms of $\eps' = \eps \alpha_{\bS} \leq \eps$.
\end{proof}

In light of \cref{cor:estimation-error}, \cref{cor:AMBA}, and \cref{cor:BAMBA}, there are a couple of ways one could attempt to estimate $\bbP_{\bx}(\by)$ when given a valid adjustment set $\bZ \subseteq \VminusXY$:
\begin{enumerate}
    \item Directly estimate using $\bZ$.
    By \cref{cor:estimation-error}, this yields an error of 
    \[
    |\bbP_{\bx}(\by) - \wh{\bbP}_{\bx}(\by)|
    = |T_{\bZ, \bx, \by} - \wh{T}_{\bZ, \bx, \by}|
    \in \wt{\cO} \left( \frac{|\bSigma_{\bZ}|}{n \alpha_{\bZ}} + \frac{1}{\sqrt{n \alpha_{\bZ}}} + \sqrt{\frac{|\bSigma_{\bZ}|}{n}} \right)
    \]
    \item Use \AMBAshort\ on $\bZ$ to produce a subset $\bS \subseteq \bZ$ and estimate using $\bS$.
    By \cref{cor:estimation-error} and \cref{cor:AMBA}, this yields an error of
    \begin{multline*}
    |\bbP_{\bx}(\by) - \wh{\bbP}_{\bx}(\by)|
    = |T_{\bZ, \bx, \by} - \hatT_{\bS, \bx, \by}|
    \leq |T_{\bZ, \bx, \by} - T_{\bS, \bx, \by}| + |T_{\bS, \bx, \by} - \hatT_{\bS, \bx, \by}|\\
    \in \wt{\cO} \left( \frac{1}{\alpha_{\bS}} \cdot \sqrt{\frac{|\bS|}{n}} \cdot \left( \left| \bSigma_{\bX} \right| \cdot \left| \bSigma_{\bZ} \right| \right)^{\frac{1}{4}}
    + \frac{|\bSigma_{\bS}|}{n \alpha_{\bS}} + \frac{1}{\sqrt{n \alpha_{\bS}}} + \sqrt{\frac{|\bSigma_{\bS}|}{n}} \right)
    \end{multline*}
    \item Use \AMBAshort\ on $\bZ$ to produce a subset $\bS \subseteq \bZ$, then use \BAMBAshort\ to further produce subset $\bS'$, and then estimate using $\bS'$.
    By \cref{cor:estimation-error}, \cref{cor:AMBA}, and \cref{cor:BAMBA}, this yields an error of
    \begin{multline*}
    |\bbP_{\bx}(\by) - \wh{\bbP}_{\bx}(\by)|
    = |T_{\bZ, \bx, \by} - \hatT_{\bS, \bx, \by}|
    \leq |T_{\bZ, \bx, \by} - T_{\bS', \bx, \by}| + |T_{\bS', \bx, \by} - \hatT_{\bS', \bx, \by}|\\
    \in \wt{\cO} \left( \frac{1}{\alpha_{\bS}} \cdot \sqrt{\frac{|\bS|}{n}} \cdot \left( \left| \bSigma_{\bX} \right| \cdot \left| \bSigma_{\bZ} \right| \right)^{\frac{1}{4}}
    + \frac{1}{\alpha_{\bS}} \cdot \sqrt{\frac{|\bS'|}{n}} \cdot \left( \left| \bSigma_{\bX} \right| \cdot \left| \bSigma_{\bY} \right| \cdot \left| \bSigma_{\bZ} \right| \right)^{\frac{1}{4}}
    + \frac{|\bSigma_{\bS'}|}{n \alpha_{\bS'}} + \frac{1}{\sqrt{n \alpha_{\bS'}}} + \sqrt{\frac{|\bSigma_{\bS'}|}{n}} \right)
    \end{multline*}
\end{enumerate}

In any cases 2 and 3, with appropriate constant factors, we see that
\[
|\bbP_{\bx}(\by) - \wh{\bbP}_{\bx}(\by)|
= |T_{\bZ, \bx, \by} - \hatT_{\bS, \bx, \by}|
\leq |T_{\bZ, \bx, \by} - T_{\bS, \bx, \by}| + |T_{\bS, \bx, \by} - \hatT_{\bS, \bx, \by}|
\leq \eps + \eps
= 2 \eps
\]
The following lemma tells us that $\alpha_{\bZ} \leq \alpha_{\bS}$ and $\alpha_{\bZ} \leq \alpha_{\bS'}$, i.e.\ $\frac{1}{\alpha_{\bS}} \leq \frac{1}{\alpha_{\bZ}}$ and $\frac{1}{\alpha_{\bS'}} \leq \frac{1}{\alpha_{\bZ}}$, so it is always beneficial to use a smaller subset with respect to the error incurred by estimation in \cref{cor:estimation-error}.

\begin{mylemma}
\label{lem:superset-has-smaller-alpha}
For any value $\bx$ for $\bX$ and subsets $\bA \subseteq \bB \subseteq \bV \setminus \bX$, we have
\[
\alpha_{\bA}
= \min_{\ba} \bbP(\bx \mid \ba)
\geq \min_{\bb} \bbP(\bx \mid \bb)
= \alpha_{\bB}
\]
\end{mylemma}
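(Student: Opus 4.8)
The plan is to view $\bbP(\bx \mid \ba)$, for any realization $\ba$ of the smaller set $\bA$, as a convex combination of the quantities $\bbP(\bx \mid \bb)$ ranging over realizations $\bb$ of $\bB$ that are consistent with $\ba$, and then use the elementary fact that a convex combination is never smaller than the minimum of the combined values. Concretely, write $\bB = \bA \cup \bC$ as a disjoint union with $\bC = \bB \setminus \bA$ (using that $\bA \subseteq \bB$). Let $\ba \in \bSigma_{\bA}$ be the realization attaining $\alpha_{\bA} = \bbP(\bx \mid \ba)$; restricting attention to realizations of positive probability (or adopting the usual convention for conditioning on null events), the law of total probability gives
\[
\bbP(\bx \mid \ba) \;=\; \sum_{\bc \in \bSigma_{\bC}} \bbP(\bc \mid \ba) \cdot \bbP(\bx \mid \ba, \bc).
\]

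Next I would observe that each pair $(\ba, \bc)$ is a single realization $\bb \in \bSigma_{\bB}$, so $\bbP(\bx \mid \ba, \bc) = \bbP(\bx \mid \bb) \geq \min_{\bb' \in \bSigma_{\bB}} \bbP(\bx \mid \bb') = \alpha_{\bB}$ for every $\bc$. Since the weights $\bbP(\bc \mid \ba)$ are nonnegative and sum to $1$, the right-hand side above is a convex combination of numbers each at least $\alpha_{\bB}$, hence is itself at least $\alpha_{\bB}$. Therefore $\alpha_{\bA} = \bbP(\bx \mid \ba) \geq \alpha_{\bB}$, which is the claim. (Equivalently, one can phrase it without picking the minimizing $\ba$: for every $\ba$ one has $\bbP(\bx \mid \ba) \geq \alpha_{\bB}$ by the same convexity argument, and then take the minimum over $\ba$.)

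The only mildly delicate point is the treatment of realizations with zero probability, for which the conditional probabilities $\bbP(\bx \mid \ba)$ or $\bbP(\bx \mid \bb)$ are only defined by convention; I would handle this by taking the minima in the definition of $\alpha_{\bA}$ and $\alpha_{\bB}$ over realizations of positive probability, noting that if $\bbP(\ba) > 0$ then every $\bc$ with $\bbP(\bc \mid \ba) > 0$ also has $\bbP(\ba,\bc) > 0$, so the expansion above only involves well-defined terms. Beyond this bookkeeping, the argument is a one-line application of convexity, so there is no real obstacle.
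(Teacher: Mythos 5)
Your proposal is correct and is essentially the same argument as the paper's: both expand $\bbP(\bx \mid \ba)$ via the law of total probability over the values of $\bB \setminus \bA$ and lower-bound the resulting convex combination by $\min_{\bb} \bbP(\bx \mid \bb)$. Your extra remarks on zero-probability realizations are a reasonable piece of bookkeeping that the paper leaves implicit.
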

\begin{proof}
Fix an arbitrary values of $\bx$ for $\bX$ and $\ba$ for $\bA$, we see that
\[
\bbP(\bx \mid \ba)
= \sum_{\bb \setminus \ba} \bbP(\bx, \bb \setminus \ba \mid \ba)
\geq \min_{\bb} \bbP(\bx \mid \bb) \cdot \sum_{\bb \setminus \ba} \bbP(\bb \setminus \ba \mid \ba)
= \min_{\bb} \bbP(\bx \mid \bb)
\]
Therefore, $\min_{\ba} \bbP(\bx \mid \ba) \geq \min_{\bb} \bbP(\bx \mid \bb)$.
\end{proof}

Observe that $\frac{1}{\alpha_{\bS}} \leq \frac{1}{\alpha_{\bZ}}$ from \cref{lem:superset-has-smaller-alpha} and $|\bSigma_{\bS}| \leq |\bSigma_{\bZ}|$ since $\bS \subseteq \bZ$.
So, the second approach of estimating $\bbP_{\bx}(\by)$ using the subset $\bS \subseteq \bZ$ produced by \AMBAshort\ would yield an asymptotically smaller error than directly using $\bZ$ whenever $\frac{1}{\alpha_{\bS}} \cdot \sqrt{\frac{|\bS|}{n}} \cdot \left( \left| \bSigma_{\bX} \right| \cdot \left| \bSigma_{\bZ} \right| \right)^{\frac{1}{4}} \leq \frac{|\bSigma_{\bZ}|}{n \alpha_{\bS}} + \frac{1}{\sqrt{n \alpha_{\bS}}} + \sqrt{\frac{|\bSigma_{\bZ}|}{n}}$.
This happens when
\begin{equation}
\label{eq:AMBA-decision}
|\bS| \cdot \sqrt{\frac{|\bSigma_{\bX}|}{|\bSigma_{\bZ}|}}
< \max \left\{
\frac{|\bSigma_{\bZ}|}{n},
\;
\frac{\alpha_{\bS}}{|\bSigma_{\bZ}|},
\;
\alpha_{\bS}^2
\right\}
\end{equation}
Observe that we know all terms in \cref{eq:AMBA-decision} except for $\alpha_{\bS}$.
For small $n$, say when $n \ll |\bSigma_{\bZ}|$, the first term justifies estimating using the subset $\bS$ produced by \AMBAshort\ instead of directly estimating using $\bZ$.
However, for large $n$, one would need to make the decision based on $\alpha_{\bS}$.
A similar kind of decision has to be made whether the third approach, of running \AMBAshort\ to produce $\bS \subseteq \bZ$ then \BAMBAshort\ to produce $\bS' \subseteq \bZ$, would yield a smaller estimation error.
Note that $|\bSigma_{\bS'}| \leq |\bSigma_{\bS}|$ would imply $|\bS'| \leq |\bS|$ when all variables have the same domain size.

% \dc{Work out a theorem statement for special case, say constant $X,Y$ and $\alpha_S > \alpha$ for some given constant $\alpha$. write this statement in introduction along with some of the trade-off discussions above, but mainly deferring to this section}

\paccausaleffectestimationspecialcase*
\begin{proof}
Consider the following algorithm:
\begin{enumerate}
    \item Run \AMBAshort\ to obtain $\bS \subseteq \bZ$
    \item Check if $|\bS| \cdot \sqrt{\frac{|\bSigma_{\bX}|}{|\bSigma_{\bZ}|}} < \max \left\{ \frac{|\bSigma_{\bZ}|}{n}, \frac{\alpha_{\bS}}{|\bSigma_{\bZ}|}, \alpha_{\bS}^2 \right\}$ according to \cref{eq:AMBA-decision}
    \item If so, run \BAMBAshort\ to obtain $\bS' \subseteq \bZ$ and produce estimate $\hatbbP_{\bx}(\by) = \hatT_{\bS',\bx,\by}$
    \item Otherwise, produce estimate $\hatbbP_{\bx}(\by) = \hatT_{\bZ,\bx,\by}$
\end{enumerate}
That is, depending on \cref{eq:AMBA-decision}, we decide to perform estimation based on $\bS^* = \bS'$ or $\bS^* = \bZ$.
It remains to show that the bound holds for each case separately while noting that $\alpha_{\bS}, \alpha_{\bS'}, \alpha_{\bZ} \geq \alpha$.

\textbf{Case 1}: $|\bS| \cdot \sqrt{\frac{|\bSigma_{\bX}|}{|\bSigma_{\bZ}|}} < \max \left\{ \frac{|\bSigma_{\bZ}|}{n}, \frac{\alpha_{\bS}}{|\bSigma_{\bZ}|}, \alpha_{\bS}^2 \right\}$, so we estimate using $\bS^* = \bS'$ produced from \BAMBAshort\

This incurs an error of
\begin{align*}
&\; |\bbP_{\bx}(\by) - \wh{\bbP}_{\bx}(\by)|
= |T_{\bZ, \bx, \by} - \hatT_{\bS, \bx, \by}|
\leq |T_{\bZ, \bx, \by} - T_{\bS', \bx, \by}| + |T_{\bS', \bx, \by} - \hatT_{\bS', \bx, \by}|\\
\in &\; \wt{\cO} \left( \frac{1}{\alpha} \cdot \sqrt{\frac{|\bS|}{n}} \cdot \left( \left| \bSigma_{\bX} \right| \cdot \left| \bSigma_{\bZ} \right| \right)^{\frac{1}{4}}
+ \frac{1}{\alpha} \cdot \sqrt{\frac{|\bS'|}{n}} \cdot \left( \left| \bSigma_{\bX} \right| \cdot \left| \bSigma_{\bY} \right| \cdot \left| \bSigma_{\bZ} \right| \right)^{\frac{1}{4}}
+ \frac{|\bSigma_{\bS'}|}{n \alpha} + \frac{1}{\sqrt{n \alpha}} + \sqrt{\frac{|\bSigma_{\bS'}|}{n}} \right) \tag{From \cref{cor:estimation-error}, \cref{cor:AMBA}, and \cref{cor:BAMBA}}\\
\subseteq &\; \wt{\cO} \left( \frac{1}{\alpha} \cdot \sqrt{\frac{|\bZ|}{n}} \cdot \left( \left| \bSigma_{\bX} \right| \cdot \left| \bSigma_{\bY} \right| \cdot \left| \bSigma_{\bZ} \right| \right)^{\frac{1}{4}}
+ \frac{|\bSigma_{\bS'}|}{n \alpha} + \frac{1}{\sqrt{n \alpha}} + \sqrt{\frac{|\bSigma_{\bS'}|}{n}} \right) \tag{Since $\max\{|\bS|, |\bS'|\} \leq |\bZ|$}\\
\subseteq &\; \wt{\cO} \left(
\frac{1}{n} \cdot \frac{|\bSigma_{\bS^*}|}{\alpha} + \frac{1}{\sqrt{n}} \cdot \left( \frac{\sqrt{|\bZ|} \cdot \left( |\bSigma_{\bX}| \cdot |\bSigma_{\bY}| \cdot |\bSigma_{\bZ}| \right)^{\frac{1}{4}}}{\alpha} + \frac{1}{\sqrt{\alpha}} + \sqrt{|\bSigma_{\bS^*}|}
\right) \right) \tag{Since $\bS^* = \bS'$}
\end{align*}

\textbf{Case 2}: $|\bS| \cdot \sqrt{\frac{|\bSigma_{\bX}|}{|\bSigma_{\bZ}|}} \geq \max \left\{ \frac{|\bSigma_{\bZ}|}{n}, \frac{\alpha_{\bS}}{|\bSigma_{\bZ}|}, \alpha_{\bS}^2 \right\}$, so we estimate using $\bS^* = \bZ$

This incurs an error of
\begin{align*}
&\; |\bbP_{\bx}(\by) - \wh{\bbP}_{\bx}(\by)|
= |T_{\bZ, \bx, \by} - \wh{T}_{\bZ, \bx, \by}|\\
\in &\; \wt{\cO} \left( \frac{|\bSigma_{\bZ}|}{n \alpha} + \frac{1}{\sqrt{n \alpha}} + \sqrt{\frac{|\bSigma_{\bZ}|}{n}} \right) \tag{From \cref{cor:estimation-error}}\\
\subseteq &\; \wt{\cO} \left( \frac{|\bSigma_{\bS^*}|}{n \alpha} + \frac{1}{\sqrt{n \alpha}} + \sqrt{\frac{|\bSigma_{\bS^*}|}{n}} \right) \tag{Since $\bS^* = \bS'$}\\
\subseteq &\; \wt{\cO} \left(
\frac{1}{n} \cdot \frac{|\bSigma_{\bS^*}|}{\alpha} + \frac{1}{\sqrt{n}} \cdot \left( \frac{\sqrt{|\bZ|} \cdot \left( |\bSigma_{\bX}| \cdot |\bSigma_{\bY}| \cdot |\bSigma_{\bZ}| \right)^{\frac{1}{4}}}{\alpha} + \frac{1}{\sqrt{\alpha}} + \sqrt{|\bSigma_{\bS^*}|}
\right) \right) \tag{Adding more terms}
\end{align*}

Therefore, we see that the error upper bound holds for either case.
\end{proof}

\end{document}